% use paper, or submit
% use 11 pt (preferred), 12 pt, or 10 pt only

\documentclass[3p, final]{elsarticle}

\usepackage{bm}
\usepackage{amsmath,amsfonts,amssymb,amscd,amsthm,amsbsy}
\usepackage{subfigure}
\usepackage[colorlinks=true, pdfstartview=FitV, linkcolor=black, citecolor= black, urlcolor= black]{hyperref}
\usepackage{footnpag}			      	% make footnote symbols restart on each page
\usepackage{booktabs}
\usepackage{graphicx}
\usepackage{xcolor}

%\newcommand\hl[1]{{\color{red}{\textbf{#1}}}} % comment this for final paper formatting and... 
                               % uncomment this for final paper formatting

% --- Definition of the numbered Remark Environment ---
\theoremstyle{lemma}
\newtheorem{lemma}{Lemma} 
% --- End Definition ---

% --- Definition of the numbered Remark Environment ---
\theoremstyle{remark}
\newtheorem{remark}{Remark} 
% --- End Definition ---

% --- Definition of the Unnumbered Remark Environment ---
\theoremstyle{claim}
\newtheorem*{claim*}{Claim} % The * removes the numbering
% --- End Definition ---

\begin{document}

\begin{frontmatter}

\title{A new fast multiple-shooting method for computing periodic orbits in symplectic maps leveraging simultaneous Floquet vector computation to avoid large linear systems}

%% or include affiliations in footnotes:
\author[umaddress]{Bhanu Kumar\corref{mycorrespondingauthor}}
\ead{bhkumar@umich.edu}
\cortext[mycorrespondingauthor]{Corresponding author}

\address[umaddress]{Department of Mathematics, University of Michigan, 530 Church St. Ann Arbor, MI, 48109, USA}
%\address[jpladdress]{Jet Propulsion Laboratory/California Institute of Technology, 4800 Oak Grove Dr, Pasadena, CA, 91109, USA}

\begin{abstract}
Given a 4D symplectic map $F_0$ that has a normally hyperbolic invariant cylinder foliated by invariant tori, those with rational rotation numbers are themselves foliated by subharmonic periodic orbits (SPOs). If $F_0$ is part of a perturbative family $F_\varepsilon$, one is often interested in computing those SPOs which persist for $\varepsilon >0$. Assuming that a persisting SPO of $F_0$ has been identified, in this paper, we develop a quasi-Newton method which solves for the SPO simultaneously with its Floquet vectors and multipliers. This in turn enables continuation by the perturbation parameter $\varepsilon$. The resulting SPO and Floquet vectors are then used to compute Taylor parameterizations of the SPO's weak stable and unstable manifolds, if they exist. Our quasi-Newton method is based on an adaptation of the parameterization method for invariant tori, with this paper being the first-ever to apply such a framework to directly compute periodic orbit points themselves. The new algorithm improves on efficiency compared to prior multi-shooting methods for SPOs, and notably applies to the case of stroboscopic maps of 2.5 DOF Hamiltonian flows resulting from periodic perturbations of 2 DOF systems. The tools have been successfully used for studies of resonant orbits in perturbed real-life celestial systems, the results of which are summarized as a demonstration of the methods' utility.
\end{abstract}

\begin{keyword}
Periodic Orbits \sep Subharmonic \sep Resonance \sep Parameterization Method\sep Three-Body Problem
\MSC[2020] 37C25\sep 37C27\sep 37J12 \sep 70H12
\end{keyword}

\end{frontmatter}
		
\section{Introduction}

In 2 DOF Hamiltonian systems, such as the well-known planar circular restricted 3-body problem (PCR3BP) of celestial mechanics, unstable periodic orbits and their attached stable/unstable manifolds are key drivers of global dynamics. Such periodic orbits occur in 1-parameter families diffeomorphic to 2D cylinders, with orbit period varying along the family. When such a 2 DOF system is periodically perturbed, yielding a 2.5 DOF non-autonomous Hamiltonian system, unstable periodic orbits whose periods $T$ are non-resonant with the perturbation period $T_p$ generically persist as invariant tori for sufficiently small perturbation strength \cite{kumar2022}. On the other hand, the persistence of \emph{subharmonic} orbits with resonant periods, i.e., $T/T_p$ rational, is more delicate---with a strong dependence on the initial phases chosen on the periodic orbit and periodic perturbation. While most phases correspond to subharmonic orbits that are destroyed by the perturbation, certain phases yield orbits that persist as \emph{periodic orbits} in the perturbed 2.5 DOF system. Moreover, if $T_p/T = p/q$ with $p,q \in \mathbb{Z}^{+}$ coprime, these periodic orbits will have period $pT = qT_p$ in the 2.5 DOF system---potentially much longer than their period $T$ in the unperturbed 2 DOF system. 

While the above discussion is in the context of continuous-time flows, it also lends an equivalent characterization using symplectic maps. Namely, in the 2.5 DOF system, since the perturbation is periodic with period $T_p$, one can instead consider the dynamics of the time-$T_p$ flow map of the equations of motion. This gives rise to a (symplectic) \emph{stroboscopic map} on a 4D phase space. One can in fact also define this map in the 2-DOF unperturbed case, so that the stroboscopic map of the 2.5 DOF system becomes a perturbation of the 2 DOF system's map. In this context, the 2 DOF flow's unstable periodic orbit family becomes a 2D cylindrical normally hyperbolic invariant manifold (NHIM) of the \emph{unperturbed} map, entirely foliated by partially-hyperbolic invariant (1D) tori---each of which corresponds to a different flow-periodic orbit. Orbits with periods non-resonant with that of the perturbation become tori with irrational rotation numbers, while those with resonant periods yield invariant tori (of the unperturbed map) with rational rotation numbers. As indicated by KAM theory \cite{kamTutorial}, most tori with irrational rotation numbers will persist for perturbations sufficiently small, whereas those with rational rotation numbers $p/q$ will not. These latter tori are themselves foliated by $q$-iteration periodic orbits of the unperturbed 2-DOF system's stroboscopic map; of these last map-periodic orbits, generally only a discrete subset will persist under the perturbation. 

In both the flow as well as the map, given the relative scarcity of rational numbers as compared to irrationals, most unstable orbits from the unperturbed system persist as tori for sufficiently small perturbations. However, this does not necessarily hold in the case of larger perturbation strength. Indeed, \cite{kumar2023aas} showed that in a 2.5 DOF restricted 4-body problem model of spaceflight in the  Jupiter-Europa-Ganymede system, the aforementioned long, stroboscopic map-periodic orbits generate \emph{secondary resonance} regions which can overlap in the sense of Chirikov \cite{chirikov1960}, destroying all tori (corresponding to PCR3BP unstable flow-periodic orbits) in their midst. This overlap is guided by intersections of \emph{separatrices}---weak stable/unstable submanifolds, contained \emph{within} a persisting NHIM, of the long periodic orbits. Thus, to understand the destruction of whiskered tori in such perturbed 2.5 DOF systems (or equivalently, their stroboscopic maps) requires computing these long periodic orbits as well as their attached separatrices. 

Due to their long periods, particularly if $q$ is large, computing any such persisting long, unstable periodic orbits generally requires a multiple-shooting algorithm to numerically continue them from the unperturbed system. For example, one might seek to compute $q$ points on the periodic orbit such that propagating the $i$\textsuperscript{th} point by the stroboscopic map (i.e., the 2.5 DOF system flow for time $T_p$) yields the $(i+1)$\textsuperscript{th} point, modulo $q$. In this case, to solve for $q$ points in 4D space using Newton's method requires solving a $4q \times 4q$ system of linear equations. Additionally, even once Newton's method has converged to the desired long periodic orbit, determining its full stability properties---including linear approximations of potential separatrices, if they exist---traditionally involves a separate step that requires finding eigenvectors of a $4q \times 4q$ matrix. Both the Newton step and the stability analysis here have $O(q^{3})$ complexity. If $q$ is on the order of 100 or more, as is often required for analysis of breakdown of invariant tori, this can become a relatively expensive procedure. 

In this paper, we develop an efficient multiple-shooting algorithm for simultaneous computation of both these long periodic orbits as well as their corresponding Floquet stability multipliers and directions. Solving for Floquet vectors alongside the long periodic orbits in fact allows us to avoid solving large linear systems, making the algorithm more efficient than previous methods that solve for the orbit alone. To accomplish this, we adapt the parameterization method of \cite{kumar2022}, developed for computing unstable invariant tori with their center, stable, and unstable directions, to the case of periodic orbits as well. While the parameterization method \cite{haroetal} has been used in many past studies (e.g., \cite{Llave_2005, haroLlave, huguet2012, haro2021flow}) to compute invariant tori of varying properties, the only prior work applying related tools to periodic orbits is that of Calleja et al \cite{calleja2021POs}. However, this last paper only studied long periodic orbits in 2D symplectic maps, and moreover, only used a parameterization method-style algorithm to find a \emph{1D curve} containing the long periodic orbit, rather than the actual orbit points; the points themselves were only found after a phase search on this curve and a traditional large matrix-based multiple shooting scheme. Thus, we believe that the method of this paper is the first to apply the parameterization method to directly compute such long periodic orbits as well. 

The method developed here applies to any perturbative family of 4D symplectic maps satisfying certain conditions, not just stroboscopic maps. We therefore will present the method in this more general context, for families of 4D symplectic maps such that the unperturbed map has a 2D cylindrical NHIM
%foliated entirely by partially-hyperbolic invariant tori satisfying a twist condition near the orbit of interest.
whose internal dynamics are an integrable twist map, foliated entirely by partially-hyperbolic invariant tori, near some orbit of interest.
%The method developed is general not only to stroboscopic maps of 2.5 DOF, periodically-perturbed Hamiltonian systems, but also to any perturbative family of 4D symplectic maps such that the unperturbed map has a 2D normally hyperbolic invariant cylinder foliated entirely by partially-hyperbolic invariant tori satisfying a twist condition near the orbit of interest. Thus, we present the method in this more expansive context. 
We start by precisely defining the problem setting and statement in Section \ref{problemSection}, followed by some motivating background and physical models from celestial mechanics in Section \ref{modelSection}. Section \ref{spoSection} then presents the new parameterization method-style multiple-shooting algorithm for computation of the long periodic orbits and their Floquet directions and multipliers. Next, a recursive parameterization method  \citep{CabreFontichLlave, haroetal} leveraging the aforementioned Floquet directions to compute separatrices emanating from certain long periodic orbits is described in Section \ref{parambigsection}. Finally, some results from applied celestial mechanics studies in which this method was successfully implemented and used are summarized in Section \ref{numResults} as a demonstration of its utility. 

We have included several proofs in this paper to justify our methods and motivate possible adaptations, many of which are presented in the appendices (with references to them in the main body). These proofs may be skipped without detriment by readers primarily interested in details of the algorithm implementation. Similarly, those interested in more rigorous results may skip the implementation details in Section \ref{numResults}. 

\section{Problem Setting and Statement} \label{problemSection}

\subsection{Problem Setting} \label{settingSection}

In this paper, we will consider families of symplectic maps on $\mathbb{R}^4$ that depend on a perturbation parameter $\varepsilon \geq 0$. Such families will be denoted as $F_{\varepsilon}:\mathbb{R}^{4} \rightarrow \mathbb{R}^{4}$. We assume that the unperturbed map $F_{\varepsilon = 0}$ has a 2D cylindrical normally hyperbolic invariant manifold (NHIM) $\Xi_{0} \subset \mathbb{R}^{4}$ foliated entirely by a family of partially-hyperbolic (whiskered) invariant 1D tori, so that the hyperbolic directions of each such torus will be transverse to the NHIM. We also assume that the family of maps $F_{\varepsilon}$ is differentiable at $\varepsilon = 0$ with respect to the parameter $\varepsilon$ in a neighborhood of $\Xi_{0}$. While it may be possible to relax the assumptions on differentiability and phase space being $\mathbb{R}^{4}$, we only consider the previously-described case in this paper. 

Within the NHIM $\Xi_{0}$, each of its constituent invariant tori will have a \emph{rotation number} $\omega$; mathematically, this means that each $F_0$-invariant torus can be parameterized \cite{kumar2022} using a function $K: \mathbb{T} \rightarrow \mathbb{R}^{4}$ such that 
\begin{equation} \label{torusEquation} F_{0}(K(\theta)) = K(\theta + \omega) \end{equation}
Now, assume that in at least some neighborhood of some invariant torus in $\Xi_{0}$, the tori therein satisfy a twist condition---that is, the rotation number $\omega$ is not a constant for the tori in this neighborhood, but monotonically varies. This then implies that there must be (infinitely many) tori in this neighborhood with a \emph{resonant} rotation number, i.e., with $\frac{\omega}{2\pi}$ rational\footnote[1]{We use the convention of the angle $\theta$ being represented by the interval $[0,2\pi]$ with 0 and $2\pi$ identified.}. Hereafter, such tori are called \emph{resonant} tori. 

Now, consider a single resonant invariant torus of the unperturbed map $F_0$, the torus' parameterization $K_0:\mathbb{T}\rightarrow \mathbb{R}^{4}$, and its rotation number $\omega = 2\pi p/q$ for some $p, q \in \mathbb{Z}^{+}$ coprime. Eq. \eqref{torusEquation} then implies that
\begin{equation} \label{periodicTorusEquation} F_{0}^{q}(K_{0}(\theta)) = K_{0}(\theta + q\omega) = K_{0}(\theta + 2\pi p) = K_{0}(\theta) \end{equation}
In other words, any point $K_{0}(\theta)$ on this resonant torus is in fact part of a $q$-iteration periodic orbit of the map $F_0$, consisting of the $q$ distinct points $K_{0}(\theta), K_0(\theta+\omega), \dots, K_0(\theta+(q-1)\omega)$. Such periodic orbits will henceforth be referred to as \emph{subharmonic} periodic orbits. Any resonant invariant torus of $F_0$ is thus entirely foliated by a continuum of subharmonic periodic orbits, one for each value of $\theta \in [0, \omega)$. 

\subsection{The Problem: Computing Perturbed Subharmonic Periodic Orbits} \label{problemStateSection}

While the above NHIM, invariant tori, and subharmonic periodic orbits were all defined in the context of the unperturbed map $F_0$, now consider the  case of $\varepsilon > 0$. For $\varepsilon$ sufficiently small, results by Fenichel \cite{fenichel1971persistence} and others \cite{hirschPughShub} show that the $F_0$-invariant NHIM $\Xi_{0}$ should persist into the perturbed maps $F_{\varepsilon}$ as perturbed 2D cylindrical NHIMs, which we will denote as $\Xi_{\varepsilon}$. Then, KAM theory \cite{kamTutorial} tells us that tori  which satisfy the twist condition and have sufficiently irrational (Diophantine) $\frac{\omega}{2\pi}$ will persist inside $\Xi_{\varepsilon}$, for $\varepsilon > 0$ sufficiently small. On the other hand, resonant tori generically \emph{do not} persist as invariant tori of $F_\varepsilon$ for any $\varepsilon >0$; however, a finite number of subharmonic periodic orbits from such tori often \emph{do} persist into the perturbed system. Indeed, there exists a robust subharmonic Melnikov theory \cite{guckholmes, treschev1998} which can be used to determine which of the infinitely-many subharmonic periodic orbits $\{K_{0}(\theta), K_0(\theta+\omega), \dots, K_0(\theta+(q-1)\omega)\}$ that foliate a resonant torus $K_{0}(\theta)$ with $\omega = 2\pi p/q$ persist as periodic orbits of $F_\varepsilon$ for $\varepsilon > 0$. Symmetries of the maps $F_\varepsilon$ can also often be used to determine subharmonic periodic orbits of $F_0$ that might persist. 

We can now present the key problem which this work addresses: given a subharmonic periodic orbit of $F_0$ that persists into the perturbed map $F_\varepsilon$, at least for $\varepsilon >0$ sufficiently small, we would like to compute the corresponding periodic orbit of $F_\varepsilon$. Namely, if we denote the persisting $F_0$ subharmonic periodic orbit's points as $X_0(k) = K_0(\theta+k\omega)$ for $k = 0, 1, \dots, q-1$, so that $F_0(X_0(k)) = X_0(k+1 \mod q)$, we would like to solve for the points $X_\varepsilon(k) \in \mathbb{R}^{4}$, $k = 0, 1, \dots, q-1$, that satisfy the equation
\begin{equation} \label{invarianceEquation} F_\varepsilon(X_\varepsilon(k)) = X_\varepsilon(k+1 \mod q) \end{equation}
for all $k = 0, 1, \dots, q-1$. In addition, periodic orbits also have Floquet directions and multipliers; while these can theoretically be found by calculating the eigenvectors and eigenvalues of $DF^{q}_\varepsilon(X_\varepsilon(k))$, in practice this is inaccurate when $q$ is large due to the orbit's instability (recall that it is contained in a NHIM). Thus, to find the Floquet directions and multipliers, we will instead seek to solve the equation
\begin{equation} \label{floquetEquation} DF_\varepsilon(X_\varepsilon(k)) \bar P_{\varepsilon}(k) = \bar P_\varepsilon(k+1 \mod q) \bar \Lambda_{\varepsilon} \end{equation}
for matrices $\bar P_{\varepsilon}(k) \in \mathbb{C}^{4 \times 4}$, $k = 0, 1, \dots, q-1$ containing the Floquet directions at each point $X_\varepsilon(k)$, and $\bar \Lambda_{\varepsilon} \in \mathbb{C}^{4 \times 4}$ diagonal containing the Floquet multipliers. Note that the columns of $\bar P_{\varepsilon}(k)$ thus found will indeed be eigenvectors of $DF^{q}_\varepsilon(X_\varepsilon(k))$, as applying Eq. \eqref{floquetEquation} $q$ times yields that $DF^{q}_\varepsilon(X_\varepsilon(k)) \bar P_{\varepsilon}(k) = \bar P_\varepsilon(k) \bar \Lambda_{\varepsilon}^{q}$; keeping in mind that $\bar \Lambda_{\varepsilon}$ was diagonal, this implies that the columns of $\bar P_\varepsilon$ are eigenvectors of  $DF^{q}_\varepsilon(X_\varepsilon(k))$. 

\begin{remark} Note that $X_\varepsilon$ and $\bar P_\varepsilon$ can be considered to be functions from the set $\{0, 1, \dots, q-1\}$ into $\mathbb{R}^{4}$ and $\mathbb{C}^{4 \times 4}$, respectively; indeed, the choice of notation $X_\varepsilon(k)$ and $\bar P_\varepsilon(k)$, as opposed to letting $k$ be a subscript, is meant to highlight this characterization of $X_\varepsilon$ and $\bar P_\varepsilon$. It is hoped that this will facilitate easier comparisons and analogies with the invariant torus parameterization case (compare Eqs. \eqref{torusEquation} and \eqref{invarianceEquation}, for instance). \end{remark}

\section{Motivation: Hamiltonian Flow Maps and Physical Models} \label{modelSection}

2 DOF autonomous Hamiltonian systems and their 2.5 DOF periodic, non-autonomous perturbations are some of the most common situations which can give rise to the setting of Section \ref{settingSection}. We thus now give some background on these systems, their recharacterization as 4D symplectic maps, and some concrete real-life examples from celestial mechanics that will be used to illustrate the methods of this paper later on. Readers familiar with stroboscopic maps of 2.5 DOF periodic perturbations of 2 DOF Hamiltonian flows, and the effect of such perturbations on the latter's periodic orbits, may skip to Section \ref{spoSection} without detriment. 

\subsection{2 DOF Hamiltonian Systems and NHIMs of Flow-Periodic Orbits} \label{2dofSection}

If one represents $\mathbb{R}^{4}$ using position-momentum coordinates $(x,y, p_{x}, p_{y})$, a 2 DOF Hamiltonian dynamical system on this space is given by a time-independent Hamiltonian function $H_0: \mathbb{R}^{4}\rightarrow \mathbb{R}$ and the equations
\begin{equation} \label{2dofH_EOM} \dot x = \frac{\partial H_{0}}{\partial p_{x}} \quad \dot y = \frac{\partial H_{0}}{\partial p_{y}} \quad \quad \dot p_{x} = -\frac{\partial H_{0}}{\partial x} \quad \dot p_{y} = -\frac{\partial H_{0}}{\partial y} \end{equation}
As written above, this is a continuous time flow rather than a map like in Section \ref{settingSection}. However, any fixed-time flow map of the equations of motion of Eq. \ref{2dofH_EOM}---that is, the map which propagates any $\bold{x} \in \mathbb{R}^{4}$ by Eq. \eqref{2dofH_EOM} for that fixed time---yields a symplectic map on $\mathbb{R}^{4}$. Moreover, as the Hamiltonian is time-independent, being a function only of $(x, y, p_x, p_y)$, the \emph{energy} $H_0$ is a constant of motion along system trajectories. 

In such 2 DOF Hamiltonian systems, among the most important trajectories for characterizing the system's global dynamics are unstable periodic orbits (of the continous-time flow). These unstable orbits are of great interest due to their attached stable/unstable manifolds, whose intersections can yield trajectories that traverse large regions of phase space and destroy barriers to transport. Such unstable periodic orbits generally occur in continuous 1-parameter families; locally, this parameter can oftentimes be taken as the constant Hamiltonian energy value $H_0$ along the periodic orbit. Generally, the orbit period also varies continuously along the periodic orbit family. Unstable periodic orbits in 2 DOF Hamiltonian systems have real monodromy (Floquet) matrix eigenvalues 1, 1, $\lambda$, and $\lambda^{-1}$, where $| \lambda | < 1$. 

Note that topologically, any periodic orbit of a continuous-time flow is diffeomorphic to the torus $\mathbb{T}$. Now, given a family of unstable periodic orbits, consider the union of all periodic orbits across that family. The resulting set will be a 2D cylindrical manifold $\Xi_0$ in the 2 DOF Hamiltonian's phase space $\mathbb{R}^{4}$. Furthermore, at each point of $\Xi_0$, there are stable and unstable directions transverse to the manifold, which are just the stable and unstable Floquet eigenvectors of the periodic orbits which foliate $\Xi_0$. On the other hand, the generalized unit eigenspace of each periodic orbit forms the tangent space to the NHIM at that point. Since the phase space is 4D and $\Xi_0$ is 2D, at any point of $\Xi_0$, the stable \& unstable directions together with the 2D manifold tangent space span the entire phase space. This means that $\Xi_0$ is a \emph{normally hyperbolic invariant manifold} (NHIM) of the 2 DOF system's flow. For a rigorous definition of NHIMs for flows, see \cite{fenichel1971persistence}. 

\subsubsection{Example: planar circular restricted 3-body problem (PCR3BP)} \label{pcr3bpSection}
The PCR3BP is a 2 DOF Hamiltonian system which models the motion of an infinitesimally small particle (thought of as a spacecraft) under the gravitational influence of two large bodies of masses $m_{1}$ and $m_{2}$, collectively referred to as the primaries. In this model, $m_{1}$ and $m_{2}$ revolve about their barycenter in a circular orbit. Units are also normalized so that the distance between the two primaries becomes 1, their period of revolution becomes $2 \pi$, and $\mathcal{G}(m_{1}+m_{2})$ becomes 1. We define a mass ratio $\mu = \frac{m_{2}}{m_{1}+ m_{2}}$, and use a synodic, rotating non-inertial Cartesian coordinate system centered at the primaries' barycenter such that $m_{1}$ and $m_{2}$ are always on the $x$-axis. In the planar CRTBP, we also assume that the spacecraft moves in the same plane as the primaries. In this case, the equations of motion are generated by the Hamiltonian \citep{celletti}
\begin{equation}  \label{pcr3bpH} H_{0}(x,y,p_x,p_{y})= \frac{p_{x}^{2}+p_{y}^{2}}{2} + p_{x}y -p_{y}x - \frac{1-\mu}{r_{1}} - \frac{\mu}{r_{2}} \end{equation}
and Eq. \eqref{2dofH_EOM}. Here, $r_{1} = \sqrt{(x+\mu)^{2} + y^{2}}$ and $r_{2} = \sqrt{(x-1+\mu)^{2} + y^{2}} $ are the distances from the spacecraft to $m_{1}$ and $m_{2}$, respectively. Note that the PCR3BP equations of motion have a time-reversal symmetry; if $(x(t), y(t), t)$ is a solution of Eq. \eqref{2dofH_EOM}-\eqref{pcr3bpH} for $t > 0$, then $(x(-t), -y(-t), t)$ is a solution for $t < 0$. 

\subsection{Periodic Perturbations of 2 DOF Systems: the 2.5 DOF Case} \label{2_5dofSection}

Oftentimes in real-world applications, it is desirable to add a periodic perturbation to a 2 DOF Hamiltonian system. For instance,  the PCR3BP model includes gravitational forces from two large bodies, but a more accurate analysis including the effect of a third large body may at times be required (as described further in \ref{r4bpSection}). Adding such a periodic forcing effect to a 2 DOF Hamiltonian system often results in a non-autonomous, time-periodic 2.5 DOF Hamiltonian system on the 5D \emph{extended phase space} $\mathbb{R}^{4} \times \mathbb{T}$. The equations of motion in this case are given by Eq. \eqref{H_EOM} along with time-periodic Hamiltonian function \eqref{perturbed_H} 
\begin{equation} \label{H_EOM} \dot x = \frac{\partial H_{\varepsilon}}{\partial p_{x}} \quad \dot y = \frac{\partial H_{\varepsilon}}{\partial p_{y}} \quad \quad \dot p_{x} = -\frac{\partial H_{\varepsilon}}{\partial x} \quad \dot p_{y} = -\frac{\partial H_{\varepsilon}}{\partial y} \quad \quad \dot \theta_{p} = \Omega_{p} \end{equation}
\begin{equation}  \label{perturbed_H} H_{\varepsilon}(x,y,p_x,p_{y}, \theta_{p})= H_{0}(x,y,p_x,p_{y})+ H_{1}(x,y,p_x,p_{y}, \theta_{p}; \varepsilon)\end{equation}
where $\theta_{p} \in \mathbb{T}$ is an angle representing the phase of the periodic perturbation, $H_{0}$ is the 2 DOF Hamiltonian of Section \ref{2dofSection},  $H_{1}:\mathbb{R}^{4} \times \mathbb{T} \rightarrow \mathbb{R}$ is the Hamiltonian perturbation by the time-periodic effect and satisfies $H_{1}(x,y,p_x,p_{y}, \theta_{p}; 0)=0$, and $\varepsilon > 0$ and $\Omega_{p}$ are the perturbation parameter and perturbation frequency, respectively. $\varepsilon$ signifies the strength of the perturbation, with $\varepsilon=0$ being the unperturbed 2 DOF system, and $\Omega_{p}$ is a constant frequency which is assumed to be known a priori. The perturbation from $H_{1}$ is $2\pi / |\Omega_{p}|$ periodic. Note that the Hamiltonian function $H_{\varepsilon}$ will no longer be constant along trajectories. 

\subsubsection{Example: planar concentric circular restricted 4-body problem} \label{r4bpSection}

The planar concentric circular restricted 4-body problem \cite{blazevski2012} (CCR4BP) is a 2.5 DOF Hamiltonian system that results from a periodic third-body perturbation of the PCR3BP. It describes the motion of a spacecraft influenced by  gravity from \emph{three} large masses $m_{1}$, $m_{2}$, and $m_{3}$ with $m_{1} >> m_{2}, m_{3}$. $m_{2}$ and $m_{3}$ are assumed to revolve around $m_{1}$ in coplanar, concentric circles of radii $r_{12}$ and $r_{13}$, where $m_{2}$ has no effect on the motion of $m_{3}$ nor vice versa. Due to Kepler's third law, the angular velocities $\Omega_{2}$ and $\Omega_{3}$ of the revolution of $m_{2}$ and $m_{3}$ around $m_{1}$ depend on the masses and the orbital radii, as $\Omega_{i} = \sqrt{\mathcal{G}(m_{1}+m_{i})/r_{1i}^{3}}$ for $i = 2,3$ where $\mathcal{G}$ denotes the universal gravitational constant. In the planar CCR4BP, the circular orbits of $m_{2}$ and $m_{3}$ as well as the spacecraft trajectory are assumed to all lie in the same plane. 

Defining $\mu = \frac{m_{2}}{m_{1}+ m_{2}}$ and  $\varepsilon = \frac{m_{3}}{m_{1}+ m_{2}}$, and normalizing mass, length, and time units similarly to the $m_{1}$-$m_{2}$ PCR3BP---so that $\mathcal{G}(m_{1}+m_{2})$, $r_{12}$, and $\Omega_{2}$ all become $1$---the planar CCR4BP equations of motion can be written in the same rotating coordinate system usually used for the $m_{1}$-$m_{2}$ PCRTBP. $m_{1}$ and $m_{2}$ lie on the rotating frame $x$-axis, and the $m_{1}$-$m_{2}$ barycenter is taken as the frame origin. With these units and coordinate frame, the angle between the  $x$-axis and the vector from $m_{1}$ to $m_{3}$ becomes the perturbation phase angle $\theta_{p}$ as defined in Section \ref{2_5dofSection}. The equations of motion are then given by Eq. \eqref{H_EOM} with $\dot \theta_{p} = \Omega_3 - 1$ and the 2.5 DOF time-periodic Hamiltonian (see Blazevski and Ocampo \cite{blazevski2012} for a derivation)
\begin{equation}  \label{ccr4bpH}   H_{\varepsilon}(x,y,p_x,p_{y}, \theta_{p})= \frac{p_{x}^{2}+p_{y}^{2}}{2} + p_{x}y -p_{y}x  - \frac{1-\mu}{r_{1}} - \frac{\mu}{r_{2}}  - \frac{\varepsilon}{r_{3}} + \varepsilon \frac{x \cos \theta_{p}}{r_{13}^{2}} + \varepsilon \frac{y \sin \theta_{p}}{r_{13}^{2}}   \end{equation}
Here, $(x_{3}, y_{3}) = (-\mu+r_{13} \cos(\theta_{p}), r_{13} \sin(\theta_{p}) )$ give the position of $m_{3}$, while $r_{1} = \sqrt{(x+\mu)^{2} + y^{2}}$, $r_{2} = \sqrt{(x-1+\mu)^{2} + y^{2}}$, and $r_{3} = \sqrt{(x-x_{3})^{2} + (y-y_{3})^{2}}$ are the distances from the spacecraft to $m_{1}$, $m_{2}$, and $m_{3}$, respectively.  Note that when $\varepsilon=0$, Eq. \eqref{ccr4bpH} is just the $m_{1}$-$m_{2}$ Hamiltonian of Eq. \eqref{pcr3bpH}; indeed, the PCR3BP and CCR4BP fit into the framework of Section \ref{2_5dofSection} and Eq. \eqref{perturbed_H} by taking $H_{0}$ as in Eq. \eqref{pcr3bpH}, and
\begin{equation}  \label{ccr4bpH1}   H_{1}(x,y,p_x,p_{y}, \theta_{p}, \varepsilon)=  \varepsilon \left[ - \frac{1}{r_{3}} +  \frac{x \cos \theta_{p}}{r_{13}^{2}} +  \frac{y \sin \theta_{p}}{r_{13}^{2}}  \right]  \end{equation}

\subsection{Stroboscopic Maps, NHIMs, Tori, and Subharmonic Periodic Orbits} \label{stroboscopic}

Suppose that we have a family of unstable periodic orbits of a 2 DOF Hamiltonian flow. This family will form a 2D cylindrical NHIM, denoted $\Xi_0 \subset \mathbb{R}^{4}$. As mentioned in Section \ref{problemStateSection} for the case of maps, NHIMs persist under sufficiently small perturbations of the dynamics \citep{fenichel1971persistence}. However, to apply this persistence result to our case of time-periodic perturbations of 2 DOF Hamiltonian flows on $\mathbb{R}^{4}$, the original and perturbed systems must be defined on the same phase space. This is not the case for the perturbed, 2.5 DOF systems, whose phase space is $\mathbb{R}^{4} \times \mathbb{T}$ rather than $\mathbb{R}^{4}$, due to the addition of the perturbation phase angle $\theta_p \in \mathbb{T}$.

In order to pass from the extended phase space $\mathbb{R}^{4} \times \mathbb{T}$ to a dynamical system on $\mathbb{R}^{4}$ more amenable to perturbative analysis, one can use a \emph{stroboscopic map}. Recall from Section \ref{2_5dofSection} that the 2.5 DOF system phase angle $\theta_p$ has a constant frequency of $\Omega_p$, so that its period is $T_p = |2\pi/\Omega_p|$. The stroboscopic map $F_{\varepsilon}: \mathbb{R}^{4} \times \mathbb{T} \rightarrow \mathbb{R}^{4} \times \mathbb{T}$ is thus defined as the time-$T_p$ mapping of extended phase space points by the 2.5 DOF equations of motion \eqref{H_EOM} and \eqref{perturbed_H} with perturbation parameter $\varepsilon$. With this definition, given any $\theta_0 \in [0, 2\pi)$, the map $F_{\varepsilon}$ will map points from the phase space section $\Sigma_{\theta_0}=\{(x, y, p_{x}, p_{y}, \theta_{p}) \in \mathbb{R}^{4} \times \mathbb{T}: \theta_{p} = \theta_0\}$ into itself; this is because integrating $\theta_p$ by the dynamics $\dot \theta_p = \Omega_p$ for the stroboscopic mapping time $T_p= |2\pi/\Omega_p|$ will simply result in a full revolution of $\theta_p$ by $\Omega_p T_p = 2\pi$ or $-2\pi$, back to its initial angular value. 

Since $\Sigma_{\theta_0}$ is mapped into itself by $F_\varepsilon$ for any $\theta_0$, one can consider the dynamics of $F_\varepsilon$ restricted to any $\Sigma_{\theta_0}$; in essence, each $\Sigma_{\theta_0}$ is a Poincar\'e section for the 2.5 DOF system, with $F_{\varepsilon}$ being its corresponding Poincar\'e map. Moreover, each $\Sigma_{\theta_0}$ is clearly diffeomorphic to $\mathbb{R}^{4}$ through a simple projection $(x, y, p_x, p_y, \theta_0) \rightarrow (x, y, p_x, p_y)$ that drops the last non-varying coordinate. Thus, after fixing some value of $\theta_0$ and making a slight abuse of notation, one can in fact consider $F_{\varepsilon}$ to be a symplectic map from $\mathbb{R}^{4}$ into itself, as desired. Furthermore, while this discussion and stroboscopic map definition invoked the period $T_p$ of the perturbation from the 2.5 DOF system, one can just as well define a time-$T_p$ mapping $F_0: \mathbb{R}^{4} \rightarrow \mathbb{R}^{4}$ of points of $\mathbb{R}^{4}$ by the \emph{unperturbed}, 2 DOF system's equations of motion. We thus get a family of symplectic stroboscopic maps $F_\varepsilon:\mathbb{R}^{4} \rightarrow \mathbb{R}^{4}$ depending on a perturbation parameter $\varepsilon \geq 0$, with $F_{\varepsilon = 0}$ corresponding to the unperturbed system---just as in the problem setting described at the beginning of Section \ref{settingSection}. 

With the family of stroboscopic maps defined in the vein of Section \ref{settingSection}, we now characterize the NHIM $\Xi_0$ of 2 DOF flow-periodic orbits in this context as well. First of all, since $\Xi_0$ was a NHIM of the 2 DOF flow, it will also be a NHIM of the unperturbed map $F_0$, as the dynamics of $F_0$ are defined in terms of that same flow so that the same invariance and normal hyperbolicity properties will hold. Furthermore, since each constituent periodic orbit of $\Xi_0$ is invariant under the 2 DOF flow, it will also be invariant under $F_0$. As flow-periodic orbits are diffeomorphic to 1D tori, this means that each unstable periodic orbit of the 2 DOF flow corresponds to an unstable (whiskered) 1D invariant torus of $F_0$. Since $\Xi_0$ was by definition entirely foliated by unstable periodic orbits, it is hence entirely foliated by whiskered invariant tori of $F_0$, just as required in the problem setting of Section \ref{settingSection}. Furthermore, if the equations of motion Eqs. \eqref{H_EOM}-\eqref{perturbed_H} are differentiable at $\varepsilon = 0$ with respect to $\varepsilon$ in a neighborhood of $\Xi_{0}$, then the maps $F_{\varepsilon}$ will be as well.

Finally, with both the family of symplectic stroboscopic maps $F_{\varepsilon}$ as well as the $F_0$-invariant NHIM $\Xi_0$ of 2 DOF flow-periodic orbits understood to satisfy the assumptions of Section \ref{settingSection}, we now consider the properties of the $F_0$-invariant tori formed by those same flow-periodic orbits inside $\Xi_0$. Let $\bold{x}_{0} \in \Xi_0$ be a point on some periodic orbit of period $T$ under the 2 DOF flow; denoting the time-$t$ flow map of the 2 DOF system as $\phi(\bold{x}, t): \mathbb{R}^{4} \times \mathbb{R} \rightarrow \mathbb{R}^{4}$, and defining $K_0: \mathbb{T} \rightarrow \mathbb{R}^{4}$ as $K_0(\theta) = \phi(\bold{x}_{0}, \frac{\theta}{2\pi} T)$, we then have that
\begin{equation} \label{flowPoTorusParam} F_{0}(K_{0}(\theta)) = F_0\left(\phi\left(\bold{x}_{0}, \frac{\theta}{2\pi} T\right)\right) = \phi\left(\bold{x}_{0}, \frac{\theta}{2\pi} T + T_p \right) = K_{0} \left(\theta +  \frac{2\pi}{T}  T_p\right) \end{equation}
where the middle equality holds because $F_0$ is the time-$T_p$ flow map of the 2 DOF system. Noting that the image of $K_0$ is the entire $F_0$-invariant torus formed by the periodic orbit---for which it thus serves as a torus parameterization---and comparing Eq. \eqref{flowPoTorusParam} to Eq. \eqref{torusEquation}, it is clear that this torus has a rotation number of $\omega = 2\pi T_p / T$ under $F_0$. This last expression thus relates the orbital periods $T$ of 2 DOF flow-periodic orbits to the rotation numbers $\omega$ of the corresponding $F_0$-invariant tori. Now, assume that the flow-periodic orbits in at least some portion of the NHIM have monotonically varying periods $T$ under the 2 DOF flow. Then, the rotation numbers $\omega = 2\pi T_p / T$  of the corresponding $F_0$-invariant tori will satisfy the twist condition of Section \ref{settingSection}. Thus, the existence of tori with resonant rotation numbers, as defined in the same section, will also be guaranteed, and the remainder of the discussion of Sections \ref{settingSection}--\ref{problemStateSection} will hold as well. 

In summary, we have shown that by considering stroboscopic maps of 2.5 DOF Hamiltonian systems generated through periodic perturbations of 2 DOF Hamiltonian systems, if the 2 DOF system contains a family of unstable periodic orbits, the setting of Section \ref{settingSection}---which this paper will assume henceforth---is indeed attained with only minor additional assumptions on orbit periods and differentiability with respect to $\varepsilon$. These last assumptions generally hold true in practice; for example, both conditions hold when studying the 2.5 DOF planar CCR4BP perturbation of the PCR3BP, as long as the orbit of the perturbing third body does not intersect the PCR3BP unstable periodic orbit family considered in $(x,y)$ space. Also, as an added benefit, the dimension-reducing nature of stroboscopic maps makes certain calculations and visualizations simpler. Thus, in the remainder of this paper, we will present our methods only for the case of the 4D symplectic maps of Section \ref{settingSection}, but without any loss of generality to 2.5 DOF Hamiltonian flows as well.

\begin{remark} 
As a final clarifying discussion, we also present an alternative description of resonant tori as well as subharmonic periodic orbits for the flow-derived stroboscopic map setting. Namely, recall from Section \ref{settingSection} that resonant tori are those with $\frac{\omega}{2\pi}$ rational. Given that $\omega = 2\pi T_p / T$ here, resonant $F_0$-invariant tori thus correspond to orbits with $T_p/T$ rational---in other words, a resonance between the 2 DOF flow-periodic orbit's period and the perturbation period. In this case, if $\frac{\omega}{2\pi} = {T_p}/{T} = p/q$, then any subharmonic map-periodic orbits of Section \ref{settingSection} that persist into the $\varepsilon>0$ case will have periods of $q$ stroboscopic map iterations of $F_{\varepsilon}$---equivalent to time $qT_p$ under the corresponding 2.5 DOF system's flow. 
\end{remark}

\section{An Efficient Method for Computing Subharmonic Periodic Orbits and Floquet Vectors} \label{spoSection}

In this section, we develop and implement an efficient quasi-Newton algorithm for the simultaneous computation of unstable subharmonic periodic orbits, as well as their Floquet directions and multipliers, for the perturbative families of 4D symplectic maps $F_{\varepsilon}$ described in Section \ref{problemSection}. We present the analytical details and derivation of the method, as well as the considerations required for its numerical implementation. This method is heavily inspired by the parameterization method of \cite{kumar2022} for computing invariant tori with their center, stable, and unstable directions, and much of the discussion and proofs  in this section follow a very similar structure to those of that paper. However, the adaptation to handle periodic orbits instead of tori requires a number of significant modifications as well, which will be highlighted throughout this section.

\subsection{The Parameterization Method for Invariant Manifolds} \label{paramsectiongeneral}

The parameterization method is a general technique for the computation of many kinds of invariant objects in dynamical systems. \cite{haroetal} describe several applications. The idea is that given a map $F: \mathbb{R}^{d} \rightarrow \mathbb{R}^{d}$, if we know that there is an $F$-invariant object diffeomorphic to some model manifold $\mathcal{M}$, then we can solve for a function $W:\mathcal M \rightarrow \mathbb{R}^{d}$ and a diffeomorphism $f: \mathcal M \rightarrow \mathcal M$ such that the invariance equation
\begin{equation}  \label{invariancequation}   F(W(s)) = W(f(s)) \end{equation}
holds for all $s \in \mathcal M$. $W$ is referred to as the parameterization of the invariant manifold, and $f$ as the internal dynamics on $\mathcal M$. Eq. \eqref{invariancequation} means that $F$ maps the image $W(\mathcal M)$ into itself, so that $W(\mathcal M)$ is the invariant object in the full space $\mathbb{R}^{d}$. 

\subsection{Parameterization Method-Style Equations for Subharmonic Periodic Orbits and Floquet Vectors} \label{quasiNewton}

Assume that for some $\varepsilon>0$, we aim to compute a subharmonic periodic orbit (SPO, from this point onwards) of $F_{\varepsilon}$ corresponding to a $q$-iteration unperturbed-map SPO (of $F_0$) that is expected to persist. As described in Section \ref{problemStateSection}, we thus wish to find points $X_{\varepsilon}(k) \in \mathbb{R}^{4}$ of the $F_{\varepsilon}$-SPO satisfying the equation 
\begin{equation} \label{invariance} F_\varepsilon(X_\varepsilon(k)) = X_\varepsilon(k+1 \mod q) \end{equation}
for all $k = 0, 1, \dots, q-1$. Eq. \eqref{invariance} can be interpreted similarly to the framework of Section \ref{paramsectiongeneral} with $\mathcal M = \{0, 1, \dots, q-1\}$ and $f(s) = s+1 \mod q$; it also bears similarities to the torus invariance equation of Eq. \ref{torusEquation}, which will allow for the adaptation of torus parameterization methods from \cite{kumar2022} to SPOs. 

In addition, recall from Section \ref{problemStateSection} that we also sought to solve Eq. \eqref{floquetEquation} for the Floquet directions and multipliers of the SPO given by $X_{\varepsilon}$. For our quasi-Newton method, we will thus add an equation of the same form, but with slightly different assumptions on the Floquet matrix that will nevertheless then enable easy solution of Eq. \eqref{floquetEquation} as well. In particular, we will seek matrices $P_{\varepsilon}(k), \Lambda_{\varepsilon}(k) \in \mathbb{C}^{4 \times 4}$ such that
\begin{equation} \label{bundleEquations} DF_\varepsilon(X_\varepsilon(k)) P_{\varepsilon}(k) = P_\varepsilon(k+1 \mod q) \Lambda_{\varepsilon}(k) \end{equation}
for all $k = 0, 1, \dots, q-1$. Furthermore, we will mandate that each $\Lambda_{\varepsilon}(k)$ has the \emph{near-diagonal} form 
\begin{equation} \label{lambdaForm} \Lambda_{\varepsilon}(k) = \begin{bmatrix}
\lambda_1 &  T   & 0 & 0 \\ 0 &  \lambda_2   & 0 & 0 \\ 0 & 0  & \lambda_s(k) & 0 \\ 0 &  0 & 0 & \lambda_u(k) \end{bmatrix}  \end{equation}
as opposed to the fully-diagonal $\bar \Lambda_{\varepsilon}$ of Section \ref{problemStateSection}. Here, $T, \lambda_{1}, \lambda_{2} \in \mathbb{C}$ and $\lambda_{s}(k), \lambda_{u}(k) \in \mathbb{R}$ are to be found. 

In the above $\Lambda_{\varepsilon}(k)$, we adopt the convention that $\lambda_{s}(k)$ and $\lambda_{u}(k)$ represent  stable and unstable SPO Floquet multipliers corresponding to directions transverse to the 2D NHIM $\Xi_{\varepsilon}$ that contains the SPO (recall Section \ref{problemStateSection}). $\lambda_{1}$ and $\lambda_{2}$ on the other hand will represent multipliers of the dynamics tangent to $\Xi_{\varepsilon}$. 
Recall that the $\bar P_{\varepsilon}(k) \in \mathbb{R}^{4 \time 4}$ defined in Section \ref{problemStateSection} contained Floquet directions of the SPO at the point $X_{\varepsilon}(k)$, with its columns being eigenvectors of the monodromy matrix $DF_{\varepsilon}^{q} (X_{\varepsilon}(k))$. In a similar vein, it is not hard to see that columns 1, 3, and 4 of $P_{\varepsilon}(k)$ above will also be eigenvectors of $DF_{\varepsilon}^{q} (X_{\varepsilon}(k))$ corresponding to eigenvalues $\lambda_{1}^{q}$, $\prod_{k=0}^{q-1} \lambda_{s}(k)$, and $\prod_{k=0}^{q-1} \lambda_{u}(k)$, respectively. The fourth eigenvalue of $DF_{\varepsilon}^{q} (X_{\varepsilon}(k))$ will be $\lambda_{2}^{q}$; however, column 2 of $P_{\varepsilon}$ will \emph{not} be its corresponding eigenvector, due to the off-diagonal term $T$ in Eq. \eqref{lambdaForm}.\ Instead, column 2 will lie in the span of the eigenvectors corresponding to $\lambda_{1}^{q}$ and $\lambda_{2}^{q}$. Nevertheless, given a solution of Eqs. \eqref{invariance}-\eqref{bundleEquations}, it will be very easy to determine this last eigenvector as well. 

As will be explained at the end of Section \ref{Xstep}, solving simultaneously for $X_{\varepsilon}$, $P_{\varepsilon}$, and $\Lambda_{\varepsilon}$ is actually more efficient than solving for $X_{\varepsilon}$ alone. Similarly to the torus parameterization method of \cite{kumar2022} from which it is adapted, the quasi-Newton method we will present for solving Eq. \eqref{invariance}-\eqref{bundleEquations} uses the near-diagonal form of $\Lambda_{\varepsilon}$ to decouple the linear system of equations we get in each differential correction step. The method will require only algebraic operations, index shifts, and the solving of 1D equations for scalar-valued sequences. 
Finally, note that Eq. \eqref{bundleEquations} is actually underdetermined; we can change the scales of the stable and unstable directions at each $k$; we will take advantage of this in Section \ref{constantLambda} to make $\Lambda_{\varepsilon}$ constant. 

\begin{remark}
Note that given any invertible matrix $V \in \mathbb{C}^{4 \times 4}$, if $X_{\varepsilon}$, $P_{\varepsilon}$, $\Lambda_{\varepsilon}$ are a solution of Eqs. \eqref{invariance}-\eqref{bundleEquations}, then the matrices $\tilde P_{\varepsilon}(k) = P_{\varepsilon}(k) V$ and $\tilde \Lambda_{\varepsilon}(k) = V^{-1} \Lambda_{\varepsilon}(k) V$ will also yield solutions of Eq. \eqref{bundleEquations}. This will allow us to change the form of $\Lambda_{\varepsilon}$ between that of Eq. \eqref{lambdaForm}, the desired fully-diagonal form of Section \ref{problemStateSection}, and others as well that will occur throughout the quasi-Newton procedure. 
\end{remark}

\subsection{Preliminaries: Solution Methods for Common Equations and Procedures} \label{prelimSection}

Throughout the discussion to follow, a number of basic procedures are repeatedly used. Therefore, before presenting the details of the quasi-Newton method initialization and stepping algorithms, we discuss some of these results and procedures that will be required later in Sections \ref{continuationSection}-\ref{Pstep}.

\subsubsection{Fixed-point iteration for equations of form $\lambda_{a}(k) u(k) - \lambda_{b}(k) u(k+1 \mod q) = b(k)$} \label{fixedPointIter}

Throughout the following sections, we will repeatedly encounter equations of the form 
\begin{equation} \label{cohomHyp} \lambda_{a}(k) u(k) - \lambda_{b}(k) u(k+1 \mod q) = b(k) \end{equation}
where $b(k), \lambda_{a}(k)$, and $\lambda_{b}(k)  \in \mathbb{C}$ are known for all $k = 0, 1, \dots, q-1$, and the $u(k) \in \mathbb{C}$ are to be found. The $\lambda_a(k)$ and $\lambda_{b}(k)$ will be equal to $\lambda_1$, $\lambda_2$, $\lambda_s(k)$, $\lambda_u(k)$, or 1. While one can write explicit formulas for the $u(k)$ here (see Section \ref{cohomHypFormulaSection}), if $| \lambda_a(k)/\lambda_b(k)| > 1$ or $| \lambda_a(k)/\lambda_b(k)| <1$ for all $k = 0, 1, \dots, q-1$ (as usually occurs), it is more numerically stable to use fixed point iteration instead. 
For this, rewrite Eq. \eqref{cohomHyp} as
\begin{equation} \label{xi3contract} u(k) = \left[\lambda_{a}(k-1 \mod q)u(k-1 \mod q) -  b(k-1 \mod q)\right] / \lambda_{b}(k-1 \mod q) \stackrel{\text{def}}{=} [A(u)](k)\end{equation}    
if $| \lambda_a(k)/\lambda_b(k)| <1$ for all $k = 0, 1, \dots, q-1$. If instead $| \lambda_a(k)/\lambda_b(k)| > 1$, then rewrite Eq, \eqref{cohomHyp} as
\begin{equation} \label{xi4contract} u(k) = \lambda_{a}^{-1}(k) \left[ b(k) +  \lambda_{b}(k) u(k+1 \mod q) \right]  \stackrel{\text{def}}{=}  [B(u)](k) \end{equation}  
We define $A$ and $B$ above as maps which send any finite sequence $\{u(k)\}_{k = 0, \dots, q-1}$ to the new finite sequences $A(u)$ and $B(u)$ with $k$\textsuperscript{th} terms given by the middle expressions of Eq. \eqref{xi3contract}-\eqref{xi4contract} for $k = 0, \dots, q-1$. 
It is then easy to show (see \ref{contractionProof}) that if $| \lambda_a(k)/\lambda_b(k)| <1$ for all $k = 0, 1, \dots, q-1$, $A$ is a contraction under the $\ell^\infty$ norm, and similarly for $B$  if all $| \lambda_a(k)/\lambda_b(k)| >1$ instead. Thus, to find $u$, let $u_0(k) = 0$ for all $k = 0, \dots, q-1$, and repeatedly iterate $u_{n+1} = A(u_{n})$ (if all $| \lambda_a(k)/\lambda_b(k)| < 1$) or $u_{n+1} = B(u_{n})$ (if all $| \lambda_a(k)/\lambda_b(k)| > 1$), starting at $n=0$. By the contraction mapping theorem  \cite{chicone2006}, the iteration will converge to the desired solution sequence $u$ of Eq. \eqref{xi3contract} or  \eqref{xi4contract}, and thus also of Eq. \eqref{cohomHyp}.

\subsubsection{Explicit formulas for equations of form $\lambda_{a}(k) u(k) - \lambda_{b}(k) u(k+1 \mod q) = b(k)$} \label{cohomHypFormulaSection}

The fixed-point iteration method of solving Eq. \eqref{cohomHyp} presented in the previous section relies on the assumption that all $| \lambda_a(k)/\lambda_b(k)| >1$ or all $| \lambda_a(k)/\lambda_b(k)| < 1$ for $k = 0, 1, \dots, q-1$. However, even if $\lambda_a(k)$ and $\lambda_a(k)$ are not all both 1 (a case handled in the following subsection), it can be that $| \lambda_a(k)| = |\lambda_b(k)| = 1$ for all $k = 0, 1, \dots, q-1$. This can occur, for example, if all $\lambda_a(k) = \lambda_1 $, $\lambda_b(k) = \lambda_2$, and $\lambda_1$ and $\lambda_2$ are a complex conjugate pair of elliptic eigenvalues. As all cases of this type involve constant $\lambda_a$ and $\lambda_{b}$ (independent of $k$), we will simply write $\lambda_a(k)$ and $\lambda_b(k)$ as $\lambda_a$ and $\lambda_{b}$ from this point onwards.

To address such cases, it is possible to write an explicit formula for the solution of Eq. \eqref{cohomHyp}. For this, one simply takes the relation $u(k) = \lambda_{a}^{-1} \left[ b(k) +  \lambda_{b} u(k+1 \mod q) \right] $, evaluates it at $k+1 \mod q$ to get $u(k+1 \mod q) = \lambda_{a}^{-1} \left[ b(k +1 \mod q) +  \lambda_{b} u(k+2 \mod q) \right] $, substitutes this into the expression for $u(k)$, and continues similarly with $u(k+2 \mod q)$ and so forth $q$ times to find a relation that yields 
\begin{equation} \label{cohomHypSoln} u(k) = \frac{\lambda_{a}^{-1}}{1- (\lambda_b/\lambda_a)^q }\sum_{i=0}^{q-1} ({\lambda_{b}}/{\lambda_{a}})^{i} b(k+i \mod q)  \end{equation} 
In practice, it is more efficient to evaluate $u(0)$ using the above formula, and then recursively calculate the remaining $u(k)$ using the relation $u(k+1 \mod q) = \left[\lambda_{a}u(k) -  b(k)\right] / \lambda_{b}$. Note that if $|\lambda_b/\lambda_a| > 1$, the above formula is highly numerically unstable, so that in such cases the methods of Section \ref{fixedPointIter} are used instead. Also note that Eq. \eqref{cohomHypSoln} is undefined if $(\lambda_b/\lambda_a)^{q} = 1$, which will occur if $\lambda_a = \lambda_b = 1$. We address this next.

\subsubsection{``Cohomological equations'': the case $u(k) -  u(k+1 \mod q) = b(k)$} \label{cohomSection}

The final case of Eq. \eqref{cohomHyp} is when $\lambda_a(k) = \lambda_b(k) = 1$ for all $k$. In this case, we get an equation of form
\begin{equation} \label{cohomological}u(k) - u(k+1 \mod q) =  b(k) \end{equation}
This equation is analogous to the cohomological equations $u(\theta) - u(\theta+\omega) = b(\theta)$ involved in computing invariant tori and their Floquet directions, e.g. in \cite{kumar2022}. 
Notice that similar to the torus case, in which $b$ must have zero average, summing both sides of Eq. \eqref{cohomological} for $k = 0, 1, \dots, q-1$ yields the necessary condition $\sum_{k=0}^{q-1} b(k) =0$ here as well. Note that if $u(k)$ is a solution of Equation \eqref{cohomological}, then so is $u(k)+C$ for any $C \in \mathbb{C}$, making the solution non-unique. Thus, to solve for $u(k)$, one can set $u(0) = 0$ arbitrarily, and then recursively find $u(1), \dots, u(q-1)$ using Equation \eqref{cohomological} for $k = 0, \dots, q-2$. Equation \eqref{cohomological} will then automatically also be satisfied for $k=q-1$ due to the condition $\sum_{k=0}^{q-1} b(k) =0$.

\subsubsection{Procedure for making $\Lambda_{\varepsilon}$ independent of $k$} \label{constantLambda}

Suppose that one has a solution $X_{\varepsilon}$, $P_{\varepsilon}$, and $\Lambda_{\varepsilon}$ of Eqs. \eqref{invariance}-\eqref{bundleEquations} with each $\Lambda_{\varepsilon}(k)$ being of the form given by Eq. \eqref{lambdaForm}. In these $\Lambda_{\varepsilon}(k)$ matrices, all entries are thus constant (independent of $k$) except for the $\lambda_{s}(k)$ and $\lambda_{u}(k)$. However, through a rescaling of the third and fourth columns of $P_{\varepsilon}(k)$, it is possible to find a new solution $X_{\varepsilon}$ (unchanged), $\tilde P_{\varepsilon}$ (new), and $\tilde \Lambda_{\varepsilon}$ (new) such that the $\tilde \Lambda_{\varepsilon}(k)$ will be constant for all $k$. As such solutions can help ensure better numerical stability during numerical continuation, we now discuss how to ``make $\Lambda_{\varepsilon}$ constant''. This procedure will be used in Sections \ref{initCols34} and \ref{Pstep}. 

Denote columns 3 and 4 of $P_{\varepsilon}(k)$ as $\bold{v}_{s}(k)$ and $\bold{v}_{u}(k)$, respectively. Now, set $\bar \lambda_s = \exp \left[ \frac{1}{q}\sum_{0}^{q-1} \log (\lambda_s(k))  \right]$ and $\bar \lambda_u = \exp \left[ \frac{1}{q}\sum_{0}^{q-1} \log (\lambda_u(k))  \right]$ and let $a_s(k),a_u(k) \in \mathbb{R}$, $k = 0, 1, \dots, q-1$ be the solutions to
\begin{gather} \label{scalevs} \log(a_s(k))  - \log(a_s(k+1 \mod q)) = - [\log (\lambda_s(k)) - \log (\bar \lambda_s) ] \\
 \label{scalevu}  \log(a_u(k)) - \log(a_u(k+1 \mod q)) = -[ \log (\lambda_u(k)) - \log (\bar \lambda_u) ] \end{gather}
The aforementioned values of $\bar \lambda_s, \bar \lambda_u \in \mathbb{R}$ ensure that the LHS of both Eqs. \eqref{scalevs}--\eqref{scalevu}  will have zero sum over all $k$. Thus, letting $u(k) = \log(a_s(k))$, Eq. \eqref{scalevs} becomes an equation of form Eq. \eqref{cohomological} which can be solved for all $u(k)$ by the methods of Section \ref{cohomSection}. This gives $a_{s}(k)=e^{u(k)}$. We can solve Eq. \eqref{scalevu} for $a_{u}(k)$ in the exact same manner. Finally, one should replace columns 3 and 4 of each $P_{\varepsilon}(k)$ by $ \bold{\tilde v}_{s} (k)= a_s(k) \bold{v}_{s}(k)$ and $ \bold{\tilde v}_{u} (k)= a_u(k) \bold{v}_{u}(k)$ respectively to get the desired $\tilde P_{\varepsilon}$, and replace $\lambda_{s}(k)$ and $\lambda_{u}(k)$ in each $\Lambda_{\varepsilon}(k)$ by $\bar \lambda_s$ and $\bar \lambda_{u}$ to get $\tilde \Lambda_{\varepsilon}$. We prove that the resulting $\tilde P_{\varepsilon}$ and $\tilde \Lambda_{\varepsilon}$ indeed satisfy Eq. \eqref{bundleEquations} in \ref{rescaleProof}. 

\subsection{Initialization for Continuation by $\varepsilon$} \label{continuationSection}

To compute the desired $F_{\varepsilon}$-subharmonic periodic orbit points $X_{\varepsilon}(k)$ and matrices $P_{\varepsilon}$, $\Lambda_{\varepsilon}$ solving Eqs. \eqref{invariance}-\eqref{bundleEquations} for some desired perturbation $\varepsilon = \varepsilon_{f} > 0$, we will start from the corresponding solution of the unperturbed case $\varepsilon= 0$ and numerically continue by $\varepsilon$ until the desired $F_{\varepsilon}$-SPO and matrices are found for $\varepsilon = \varepsilon_{f}$. The quasi-Newton method we will present in Sections \ref{Xstep}-\ref{Pstep} will enable this continuation procedure: choose a number of continuation steps $n$, take an SPO $X_{\varepsilon}$ and matrices $P_{\varepsilon}$, $\Lambda_{\varepsilon}$ from the $\varepsilon = 0$ system, and use them to help generate an initial guess for the quasi-Newton method to solve for the SPO and matrices in the $\varepsilon = \varepsilon_{f}/n$ system. Similarly, for $i = 1, \dots, n-1$, use the solution from the $\varepsilon_{f} i/n$ system to generate an initial guess for the solution in the $\varepsilon_{f} (i+1)/ n$ system. Once $i=n-1$, we will have the SPO and matrices for $\varepsilon = \varepsilon_{f}$. First, however, one must find the $\varepsilon = 0$ solution to initialize the continuation.

As described in Section \ref{problemStateSection}, we assume that the points $X_{0}(k)$, $k=0, 1, \dots, q-1$, corresponding to an $F_0$-SPO that persists into the perturbed map $F_{\varepsilon}$, are already known, e.g., from a Melnikov-type analysis. However, we still need $P_{0}(k)$ and $\Lambda_{0}(k)$ solving Eqs. \eqref{bundleEquations} for the $\varepsilon = 0$ case, with $\Lambda_{0}$ of the form of Eq. \eqref{lambdaForm}. Also, for later steps in the procedure, it will be desirable for all the $\lambda_s(k)$ and $\lambda_u(k)$ from $\Lambda_0(k)$ to be taken positive. We now describe how to construct such a solution $X_{0}$,  $P_{0}$, $\Lambda_{0}$ to Eqs. \eqref{invariance}--\eqref{bundleEquations} for $\varepsilon = 0$. 

\subsubsection{Columns 3 and 4 of $P_0$ and the multipliers $\lambda_s, \lambda_u$} \label{initCols34}

Recall from Section \ref{quasiNewton} that the $\lambda_s(k)$ and $\lambda_u(k)$ are stable and unstable Floquet multipliers respectively corresponding to stable and unstable Floquet directions of the SPO at $X_0(k)$. These directions,  which will form columns 3 and 4 of $P_{0}(k)$, must be eigenvectors of the SPO monodromy matrix $DF_{0}^{q}(X_{0}(k))$. As $X_0(k)$ lies on an unstable resonant $F_{0}$-invariant torus, $DF_{0}^{q}(X_{0}(k))$ will have real eigenvalues 1, 1, $\lambda$, and $\lambda^{-1}$ with $|\lambda| > 1$. However, if $q$ is large, it may be numerically difficult to find eigenvectors for $\lambda$ and $\lambda^{-1}$ directly, as the matrix $DF_{0}^{q}(X_{0}(k))$ will numerically have very large entries, with $\lambda$ very large and $\lambda^{-1}$ very small. 

Denote the desired stable/unstable monodromy matrix eigenvectors at $X_0(k)$, corresponding to eigenvalues $\lambda^{-1}$ and $\lambda$, as $\bold{v}_{s}(k)$ and $\bold{v}_{u}(k)$, respectively. To find $\bold{v}_{s}(k)$ and $\bold{v}_{u}(k)$, rather than handling the matrix $DF_{0}^{q}(X_{0}(k))$, one can instead use an iterative approach. 
%one of two other approaches. If $F_0$ is a stroboscopic time-$T_p$ map of some 2 DOF Hamiltonian flow, the SPO must have some minimal period $T \leq qT_p$ under the flow. If $T$ is not too large, then the stable/unstable eigenvectors of the map-period monodromy matrix $DF_{0}^{q}(X_{0}(k))$ will be same as those of the flow-period (time-$T$) monodromy matrix at $X_{0}(k)$---which may be much easier to compute. 
%If so, one should compute this flow-period monodromy matrix at each point $X_0(k)$ and store the resulting unit-length stable/unstable eigenvectors as $\bold{v}_{s}(k)$ and $\bold{v}_{u}(k)$, respectively. 
%And if $F_0$ is not derived from a flow, an iterative scheme can be used to find unit stable/unstable eigenvectors $\bold{v}_{s}$ and $\bold{v}_{u}$. 
For this, first set $\bold{v}_{s,0}(0)$ and $\bold{v}_{u,0}(0)$ equal to arbitrary 4D unit vectors. Then, define $\bold{v}_{s,0}(k)$ and $\bold{v}_{u,0}(k)$ for $k=1, \dots, q-1$ by the recursive relations
\begin{equation}  \label{evecIterS} \bold{v}_{s,i}(k) = \frac{DF_0(X_0(k))^{-1} \bold{v}_{s,i}(k+1 \mod q)}{\| DF_0(X_0(k))^{-1} \bold{v}_{s,i}(k+1 \mod q) \|} \end{equation} 
\begin{equation} \label{evecIterU} \bold{v}_{u,i}(k) = \frac{DF_0(X_0(k-1 \mod q)) \bold{v}_{u,i}(k-1 \mod q)}{\| DF_0(X_0(k-1 \mod q)) \bold{v}_{u,i}(k-1 \mod q) \|}  \end{equation} 
with $i=0$. This yields initial sequences $\bold{v}_{s,0}(k)$ and $\bold{v}_{u,0}(k)$, $k = 0, 1, \dots, q-1$ to start the iteration. Now, given vector sequences $\bold{v}_{s,i}(k)$ and $\bold{v}_{u,i}(k)$, define the next iteration $\bold{v}_{s,i+1}(k)$ and $\bold{v}_{u,i+1}(k)$ by setting
\begin{equation} \label{evecIterS0}  \bold{v}_{s,i+1}(0) = \frac{DF_0(X_0(0))^{-1} \bold{v}_{s,i}(1 \mod q)}{\| DF_0(X_0(0))^{-1} \bold{v}_{s,i}(1) \|} \end{equation}
\begin{equation} \label{evecIterU0}  \bold{v}_{u,i+1}(0) = \frac{DF_0(X_0(q-1)) \bold{v}_{u,i}(q-1)}{\| DF_0(X_0(q-1)) \bold{v}_{u,i}(q-1 ) \|} \end{equation}
and then defining $\bold{v}_{s,i+1}(k)$, $\bold{v}_{u,i+1}(k)$ for $k=1, \dots, q-1$ using the above $ \bold{v}_{s,i+1}(0) $, $ \bold{v}_{u,i+1}(0) $ and the recursion of Eqs. \eqref{evecIterS}--\eqref{evecIterU} (with $i+1$ in place of $i$). This procedure will generate sequences of unit vectors $\bold{v}_{s,i}(k)$ and $\bold{v}_{u,i}(k)$ such that if $\lambda$ and $\lambda^{-1}$ are positive, then the limits $\lim_{i\rightarrow \infty} \bold{v}_{s,i}(k)$ and $\lim_{i\rightarrow \infty} \bold{v}_{u,i}(k)$ will both exist and will respectively converge to valid $\bold{v}_{s}(k)$ and $\bold{v}_{u}(k)$ for each $k = 0, 1, \dots, q-1$. Furthermore, taking limits of Eqs. \eqref{evecIterS}--\eqref{evecIterU0} as $i \rightarrow \infty$, evaluating Eq. \eqref{evecIterU} at $k+1$, and rearranging yields that 
\begin{equation} \label{evecSConfirm} DF_0(X_0(k)) \bold{v}_{s}(k) = {\| DF_0(X_0(k))^{-1} \bold{v}_{s}(k+1 \mod q) \|^{-1}} \bold{v}_{s}(k+1 \mod q)\end{equation}
\begin{equation} \label{evecUConfirm}  {DF_0(X_0(k)) \bold{v}_{u}(k)} = {\| DF_0(X_0(k)) \bold{v}_{u}(k) \|}  \bold{v}_{u}(k+1 \mod q) \end{equation}
for all $k$. Recalling that $\bold{v}_{s}(k)$, $\bold{v}_{u}(k)$ are columns 3 and 4 of $P_{0}(k)$, and comparing Eqs. \eqref{evecSConfirm}--\eqref{evecUConfirm} to columns 3 and 4 of Eq. \eqref{bundleEquations}, one can conclude that with $\bold{v}_{s}(k)$ and $\bold{v}_{u}(k)$ thus defined, $\lambda_s(k) = \| DF_0(X_0(k))^{-1} \bold{v}_{s}(k+1 \mod q) \|^{-1}$ and $\lambda_u(k) = \| DF_0(X_0(k)) \bold{v}_{u}(k) \|$---which are both positive for all $k = 0, 1, \dots, q-1$, as desired. 

While the previous iteration yields unit-length stable and unstable eigenvectors of $DF_{0}^{q}(X_{0}(k))$ when the stable/unstable eigenvalues $\lambda^{-1}$ and $\lambda$ of $DF_{0}^{q}(X_{0}(k))$ are positive, the iteration will fail to converge if $\lambda < 0$;  failure of the iteration in fact \emph{implies} that $\lambda < 0$. In such a case, note that if $\lambda<0$ and $\lambda^{-1}<0$ are eigenvalues of the $q$-iteration monodromy matrix $DF_{0}^{q}(X_{0}(k))$, then the $2q$-iteration monodromy matrix $DF_{0}^{2q}(X_{0}(k)) = [DF_{0}^{q}(X_{0}(k))]^{2}$ will have \emph{positive} eigenvalues $\lambda^{2}$ and $\lambda^{-2}$. Thus, if one considers the SPO of interest to be a $2q$-iteration-long $F_0$-periodic orbit rather than a $q$-iteration orbit, its monodromy matrix will have only positive eigenvalues---and procedure of Eqs. \eqref{evecIterS}-\eqref{evecIterU0} will converge to valid choices of $\bold{v}_{s}(k)$, $\bold{v}_{u}(k)$ with $\lambda_s(k), \lambda_u(k) > 0$. Hence, given an SPO with $X_0(k)$, $k = 0, 1, \dots, q-1$ for which the previous iteration fails to converge, define a new length-$2q$ sequence $\tilde X_0(k)$, $k = 0, 1, \dots, 2q -1$ with values defined by $\tilde X_0(k) = X_0(k \mod q)$. Considering $\tilde X_{0}$ rather than $X_0$ (and with $2q$ playing the role of $q$) will then allow for the procedure of Eqs. \eqref{evecIterS}-\eqref{evecIterU0}, as well as all further steps in this paper, to proceed as desired. 

The above methodology yields valid columns 3 and 4 of each $P_{\varepsilon}(k)$ given by $\bold{v}_{s}(k)$ and $\bold{v}_{u}(k)$, as well as their corresponding Floquet multipliers $\lambda_{s}(k), \lambda_{u}(k)>0$, that will satisfy columns 3 and 4 of Eq. \eqref{bundleEquations} for all $k = 0, 1, \dots, q-1$. However,  the $\lambda_{s}(k)$ and $\lambda_{u}(k)$ thus found will depend on $k$. As a final step, to improve the numerical stability of later computations, one should rescale $\bold{v}_{s}(k)$ and $\bold{v}_{u}(k)$ to ensure constant $\lambda_{s}$ and $\lambda_{u}$; the method of Section \ref{constantLambda} can be used for this. Making a slight abuse of notation,  in the next Section \ref{initCols12}, we will refer to these rescaled vectors and multipliers as $\bold{v}_{s}(k)$, $\bold{v}_{u}(k)$, $\lambda_s$, and $\lambda_u$ as well. 

\subsubsection{Columns 1 and 2 of $P_0$ and $\Lambda_0$} \label{initCols12}

With columns 3 and 4 of $P_{0}(k)$ and $\Lambda_0(k)$ all found as just described, our focus now turns to columns 1 and 2. Starting with column 1, recall from Section \ref{settingSection} that the SPO for $\varepsilon = 0$ lies on a resonant $F_{0}$-invariant torus of rotation number $\omega = 2\pi p/q$ ($p,q \in \mathbb{Z}$) parameterized by some function $K_0(\theta): \mathbb{T} \rightarrow \mathbb{R}^{4}$ satisfying Eq. \eqref{torusEquation}. Differentiating Eq. \eqref{torusEquation} for $K = K_{0}$ with respect to $\theta$  yields that $DF_{0}(K_{0}(\theta))DK_{0}(\theta) = DK_{0}(\theta + \omega)$. Now, since the points $X_0(k)$ correspond to $X_0(k) = K_0(\theta_{0}+k\omega)$ for some $\theta_{0} \in \mathbb{T}$ and $k = 0, 1, \dots, q-1$, the differentiated Eq. \eqref{torusEquation} then implies that (denote $\theta_{k} = \theta_{0}+ k\omega$ henceforth)
\begin{equation} \label{col1Lambda} DF_{0}(X_{0}(k))DK_{0}(\theta_{k} ) = DK_{0}(\theta_{k+1 \mod q} ) \end{equation}
Now, if we set column 1 of each $P_{0}(k)$ to be $DK_{0}(\theta_{k} )$, and $\lambda_1 =1$ in Eq. \eqref{lambdaForm} for all $\Lambda_0(k)$, then Eq. \eqref{col1Lambda} implies that column 1 of Eq. \eqref{bundleEquations} will automatically be satisfied. Thus, we set $\lambda_1 =1$ and column 1 of each $P_{0}(k)$ in this manner. Also note that if $F_0$ is the stroboscopic map of a 2 DOF Hamiltonian flow, then each $DK_{0}(\theta_{k} )$ is a multiple of the flow vector at $X_0(k)$ by a constant scaling factor; if this is the case, then one can use these flow vectors in place of $DK_{0}(\theta_{k} )$ as well, as the former may be easier to compute. 

With columns 1, 3, and 4 of $P_{0}(k)$ all determined along with $\lambda_{1}$, $\lambda_{s}$, and $\lambda_{u}$ of $\Lambda_0$, the last step is to find $\lambda_{2}$, $T$, and column 2 of $P_{0}(k)$. This will require some extra calculations, again leveraging the fact that the $F_0$-SPO lies on a resonant $F_0$-invariant torus parameterized by $K_0$, with $X_0(k) = K_0(\theta_{k})$. The first step in their computation is to find $A(k), B(k), C(k)$, and $D(k) \in \mathbb{R}$ for each $k = 0, 1, \dots, q-1$ such that
\begin{align} \label{abcd} \begin{split} DF_0(X_0(k)) \frac{J^{-1} DK_0(\theta_k)}{ \|DK_0(\theta_k)\|^{2}} = A(k) &DK_0(\theta_{k+1 \mod q}) + B(k) \frac{J^{-1} DK_0(\theta_{k+1 \mod q})}{ \|DK_0(\theta_{k+1 \mod q})\|^{2}} \\ 
&+ C(k) \bold{v}_{s}(k+1 \mod q) + D(k) \bold{v}_{u}(k+1 \mod q) 
\end{split} \end{align} 
where $  J= \begin{bmatrix}
0_{2 \times 2}   & I_{2 \times 2}   \\ -I_{2 \times 2}  &  0_{2 \times 2} \end{bmatrix} $ is the matrix of the symplectic form in the usual Euclidean metric on $\mathbb{R}^{4}$, and $\bold{v}_{u}(k)$, $\bold{v}_{u}(k)$ are those found at the end of Section \ref{initCols34}. All the quantities in \eqref{abcd} are known except $A(k)$, $B(k)$, $C(k),$ and $D(k)$. We can therefore consider Eq. \eqref{abcd} as a system of linear equations for $A, B, C,$ and $D$ which can be solved for each $k$. One will find that $B(k) = 1$; this occurs as a result of symplectic geometric considerations (see Eq. \eqref{BisOne}). After this, we solve for $f_{1}(k), f_{2}(k) \in \mathbb{R}$, $k = 0, 1, \dots, q-1$ such that
\begin{equation} \label{f1} -C(k) =  \lambda_{s}f_{1}(k) - f_{1}(k+1 \mod q)   \end{equation}
\begin{equation} \label{f2} -D(k) =  \lambda_{u}f_{2}(k) - f_{2}(k+1 \mod q) \end{equation}  
which can be done using the contraction map iteration method of Section \ref{fixedPointIter}. Next, set vectors $\bold{v}_{c}(k)$ as 
\begin{equation} \label{sympconj} \bold{v}_{c}(k) = \frac{J^{-1} DK_0(\theta_k)}{ \|DK_0(\theta_k)\|^{2}} + f_{1}(k) \bold{v}_{s}(k)  + f_{2}(k) \bold{v}_{u}(k) \end{equation}
These vectors will satisfy a relation very close to column 2 of Eq. \eqref{bundleEquations}. In particular, one will have that
\begin{equation} \label{almostCol2} DF_0(X_0(k)) \bold{v}_{c}(k) = A(k) DK_0(\theta_{k+1 \mod q}) + \bold{v}_{c}(k+1 \mod q)  \end{equation}
which is proven in \ref{sympConjProof}. Now, set $ T = \frac{1}{q}\sum_{k=0}^{q-1} A(k) $. Denoting the desired column $2$ of each $P_0(k)$ as $\bold{v}_{2}(k)$, to find these $\bold{v}_{2}$, one should first find $a(k) \in \mathbb{R}$, $k = 0, 1, \dots, q-1$ satisfying
\begin{equation} \label{Tkill} -[A(k) - T]  = a(k) - a(k + 1 \mod q) \end{equation}
As the sum of the LHS over all $k$ is zero by definition of $T$, Eq. \eqref{Tkill} can be solved using the method of Section \ref{cohomSection}. The resulting $a(k)$ then yield $\bold{v}_{2}$ through $\bold{v}_{2}(k) = \bold{v}_{c}(k) + a(k)DK_0(\theta_k) $, and will satisfy
\begin{equation} \label{finalCol2} DF_0(X_0(k)) \bold{v}_{2}(k) = T DK_0(\theta_{k+1 \mod q}) + \bold{v}_{2}(k+1 \mod q)  \end{equation}
as is also proven in \ref{sympConjProof}. Setting column 2 of each $P_0(k)$ to the $\bold{v}_{2}(k)$ thus found, as well as $\lambda_2 =1$ and $ T = \frac{1}{q}\sum_{k=0}^{q-1} A(k) $ in $\Lambda_0$, one can see that Eq. \eqref{finalCol2} implies that column 2 of Eq. \eqref{bundleEquations} is satisfied as well. 

\medskip
In summary, the $DK_{0}(\theta_{k} )$, $\bold{v}_{2}(k)$, $\bold{v}_{s}(k)$, and $\bold{v}_{u}(k)$ defined in the previous discussions provide columns 1, 2, 3, and 4 respectively of each $P_{0}(k)$ for $k = 0, 1, \dots, q-1$, while the nonzero entries of $\Lambda_0(k)$---that is, $\lambda_{1}=\lambda_{2} =1$, $T$, $\lambda_s$, and $\lambda_{u}$---are also given by the above procedures. Thus, given an $F_{0}$-SPO with known points $X_0(k)$ solving Eq. \eqref{invariance}, we can get a full solution $X_0$, $P_0$, $\Lambda_0$ to Eqs. \eqref{invariance}-\eqref{bundleEquations} for $\varepsilon = 0$, with each $\Lambda_0(k)$ being of the form Eq. \eqref{lambdaForm}. If this SPO is expected to persist  for $\varepsilon > 0$, the $\varepsilon = 0$ solution can then be used to start a numerical continuation procedure to compute corresponding $F_{\varepsilon}$-solutions $X_\varepsilon$, $P_\varepsilon$, $\Lambda_\varepsilon$ as well. We describe the quasi-Newton method which enables this continuation in the following sections. 

\subsection{Summary of Steps for Quasi Newton-Method for SPO and Floquet Directions/Multipliers}

With the  initialization process for finding an $\varepsilon = 0$ solution $X_0$, $P_0$, $\Lambda_0$ for Eqs. \eqref{invariance}--\eqref{bundleEquations}  fully described, we will now develop our quasi-Newton method for solving Eqs. \eqref{invariance}--\eqref{bundleEquations} for $\varepsilon > 0$ as well. As described at the beginning of Section \ref{continuationSection}, this will enable continuation of persisting SPOs of $F_{\varepsilon =0}$ into the perturbed maps $F_{\varepsilon}$ with $\varepsilon > 0$. Before presenting the details of the method, we give a brief overview. 
Assume we have an approximate solution $(X_{\varepsilon},P_{\varepsilon},\Lambda_{\varepsilon})$ for Eq. \eqref{invariance}--\eqref{bundleEquations}. Then, we will
\begin{enumerate}
\item Compute $E(k) = F_{\varepsilon}(X_{\varepsilon}(k)) - X_{\varepsilon}(k+1 \mod q)$, $E_{red}(k) = P_{\varepsilon}^{-1}(k+1 \mod q)DF_{\varepsilon}(X_{\varepsilon}(k)) P_{\varepsilon}(k) -  \Lambda_{\varepsilon}(k)$
\item Solve $-P_{\varepsilon}^{-1}(k+1 \mod q)E(k) = \Lambda_{\varepsilon}(k)\xi(k) -   \xi(k+1 \mod q)$ for $\xi(k) \in \mathbb{C}^{4}$ using Eq. \eqref{xi1}-\eqref{xi4} and set $X_{\varepsilon}(k)$ equal to either $X_{c}(k) = X_{\varepsilon}(k) + P_{\varepsilon}(k) \xi(k)$ or $\text{Re}\, X_{c}(k)$ (details given in Section \ref{Xstep}). 
\item Recompute $DF_{\varepsilon}(X_{\varepsilon}(k))$ and $E_{red}(k)$ using the newly corrected points $X_{\varepsilon}(k)$. 
\item Solve $-E_{red}(k) = \Lambda_{\varepsilon}(k) Q(k) - Q(k + 1 \mod q) \Lambda_{\varepsilon}(k) - \Delta \Lambda(k)$ for $Q(k), \Delta \Lambda \in \mathbb{C}^{4 \times 4}$ using Eqs. \eqref{E_LC}-\eqref{E_UU}. Set $P_{c}(k) = P_{\varepsilon}(k) + P_{\varepsilon}(k) Q(k)$ and $\Lambda_{c}(k) = \Lambda_{\varepsilon}(k) + \Delta \Lambda(k)$ (details given in Section \ref{Pstep}).  
\item Using a Schur decomposition, transform the aforementioned $\Lambda_{c}(k)$ into a $\Lambda_{\varepsilon}(k)$ of the form of Eq. \eqref{lambdaForm}, modifying the $P_{c}(k)$ accordingly to get corresponding $P_{\varepsilon}(k)$  as well. 
\item Return to step 1 and repeat correction until $E$ and $E_{red}$ are within tolerance. 
\end{enumerate} 
$\varepsilon$ does not change during each quasi-Newton step. Thus, throughout the following discussion we will omit the subscripts $\varepsilon$ on $F_{\varepsilon}$, $X_{\varepsilon}$, $P_{\varepsilon}$, and $\Lambda_{\varepsilon}$ for notational convenience, denoting them as $F_{}$, $X_{}$, $P_{}$, and $\Lambda_{}$ instead.

\subsection{Quasi-Newton Step for Correcting $X$} \label{Xstep}

We seek to solve Eqs. \eqref{invariance} and \eqref{bundleEquations} for $X$, $P$, and $\Lambda$. All the entries of $\Lambda$ are equal to 0 as shown in Eq. \eqref{lambdaForm} except for $\lambda_{1}$, $\lambda_{2}$, $\lambda_s(k)$, $\lambda_u(k)$, and $T$. We will now derive an iterative step that, given an approximate solution $(X,P,\Lambda)$ of Eqs. \eqref{invariance} and \eqref{bundleEquations}, produces a much more accurate one. Define the errors
\begin{equation} \label{Edef} E(k) = F(X(k)) - X(k+1 \mod q) \end{equation}
\begin{equation} \label{Ereddef} E_{red}(k) = P^{-1}(k+1 \mod q)DF(X(k)) P(k) -  \Lambda(k) \end{equation} 
We then need to find corrections $\Delta X$, $\Delta P$, and $\Delta \Lambda$ to cancel $E$ and $E_{red}$. We start with $\Delta X$; write $\Delta X(k) = P(k) \xi(k)$. We will solve for $\xi(k) \in \mathbb{C}^{4}$, $k = 0, 1, \dots, q-1$ satisfying
	\begin{equation} \label{xiEquation} \eta(k) \stackrel{\text{def}}{=} -P^{-1}(k+1 \mod q)E(k) = \Lambda(k)\xi(k) -   \xi(k+1 \mod q) \end{equation}  
\begin{claim*}
For $E$ and $E_{red}$ sufficiently small, if the $\xi(k)$ solve Eq. \eqref{xiEquation}, then adding $\Delta X(k) = P(k) \xi(k)$ to each $X(k)$ reduces the error $E$ quadratically.
\end{claim*}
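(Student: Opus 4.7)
The plan is to compute the new error $\tilde E(k) = F(X(k)+\Delta X(k)) - (X(k+1 \bmod q) + \Delta X(k+1 \bmod q))$ directly by Taylor expansion, and to show by algebraic cancellation that the leading linear-in-$E$ piece vanishes whenever $\xi$ satisfies Eq. \eqref{xiEquation}, leaving only terms quadratic in $(E, E_{red})$.

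First, I would expand $F(X(k)+P(k)\xi(k)) = F(X(k)) + DF(X(k))\,P(k)\xi(k) + R(k)$, where $R(k) = O(\|P(k)\xi(k)\|^2)$ by standard Taylor remainder (assuming $F$ is $C^2$ and $\|\xi\|$ is small). Plugging into the definition of $\tilde E$ and using $E(k) = F(X(k))-X(k+1 \bmod q)$ gives
\begin{equation*}
\tilde E(k) = E(k) + DF(X(k))\,P(k)\xi(k) - P(k+1\bmod q)\,\xi(k+1\bmod q) + R(k).
\end{equation*}
Next, from the definition of $E_{red}$ in Eq. \eqref{Ereddef} I have $DF(X(k))\,P(k) = P(k+1\bmod q)\bigl[\Lambda(k)+E_{red}(k)\bigr]$, so after substituting,
\begin{equation*}
\tilde E(k) = E(k) + P(k+1\bmod q)\bigl[\Lambda(k)\xi(k) - \xi(k+1\bmod q)\bigr] + P(k+1\bmod q)\,E_{red}(k)\xi(k) + R(k).
\end{equation*}
Now Eq. \eqref{xiEquation} says precisely $\Lambda(k)\xi(k)-\xi(k+1\bmod q) = -P^{-1}(k+1\bmod q)E(k)$, so the bracketed term times $P(k+1\bmod q)$ is exactly $-E(k)$, which cancels the leading $E(k)$. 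This leaves
\begin{equation*}
\tilde E(k) = P(k+1\bmod q)\,E_{red}(k)\,\xi(k) + R(k),
\end{equation*}
which is the desired quadratic structure, provided $\|\xi\|$ is controlled by $\|E\|$.

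The remaining step is to establish $\|\xi\|_{\ell^\infty} \le C\|E\|_{\ell^\infty}$ for some constant $C$ independent of $E, E_{red}$ (but depending on $\Lambda$, $\{P(k)\}$, and $q$). This is where I would invoke the preliminary material of Section \ref{prelimSection}: because $\Lambda(k)$ has the near-diagonal form of Eq. \eqref{lambdaForm}, the vector equation \eqref{xiEquation} decouples componentwise into four scalar sequence equations of the generic form $\lambda_a(k)u(k) - \lambda_b(k)u(k+1\bmod q) = b(k)$ treated in Sections \ref{fixedPointIter}--\ref{cohomSection} (the two hyperbolic components via the contraction-mapping bound, the two center components via the cohomological-equation or explicit-formula bound, with the off-diagonal $T$ handled by back-substitution once $\xi_1$ is known). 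Each of these scalar solvers yields an $\ell^\infty$ bound on $u$ that is linear in $\|b\|_{\ell^\infty}$, and since $\eta(k) = -P^{-1}(k+1\bmod q)E(k)$ satisfies $\|\eta\|\le \max_k \|P^{-1}(k)\|\cdot \|E\|$, chaining the bounds gives $\|\xi\| \le C\|E\|$. Combining with the expression above yields $\|\tilde E\| = O(\|E_{red}\|\,\|E\| + \|E\|^2)$, which is quadratic in the combined error magnitude, as claimed.

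The main obstacle I anticipate is the bound on $\|\xi\|$ in terms of $\|E\|$: the scalar solvers of Sections \ref{fixedPointIter}--\ref{cohomSection} each yield such a bound, but the constant $C$ depends on quantities like $\min_k|\lambda_s(k)|/|\lambda_b|$ ratios, $|1-(\lambda_b/\lambda_a)^q|^{-1}$, and on $\max_k\|P^{-1}(k)\|$; making the claim rigorous really amounts to verifying that these constants stay finite uniformly in the quasi-Newton iteration, i.e., that the approximate solution is kept in a neighborhood on which the hyperbolic/elliptic splitting is stable. The Taylor-remainder step and the algebraic cancellation are otherwise routine.
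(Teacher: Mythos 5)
Your argument is essentially identical to the paper's: you Taylor-expand $F$ at $X(k)$, substitute $\Delta X = P\xi$, use $DF(X(k))P(k) = P(k+1 \bmod q)[\Lambda(k)+E_{red}(k)]$ to reach $\tilde E(k) = P(k+1 \bmod q)E_{red}(k)\xi(k) + O(\xi^2)$, and invoke Eq.\ \eqref{xiEquation} to cancel the leading $E(k)$. The only difference is that you make the step $\|\xi\| = O(\|E\|)$ more explicit by appealing to the decoupled scalar solvers of Section \ref{prelimSection} and honestly flag the dependence of the constant on $\Lambda$, $P$, and $q$, whereas the paper simply asserts that $\xi$ is ``similar in magnitude'' to $E$---a useful sharpening, but the route is the same.
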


\begin{proof}
Substitute $X(k)+\Delta X(k)$ into the RHS of Eq. \eqref{Edef}. Assuming that $\Delta X$ is small enough (true for $E$ sufficiently small), we can expand Eq. \eqref{Edef} in Taylor series to get 
\begin{align} \label{deriveDeltaX} \begin{split}
E_{new}&(k) = F(X(k) + \Delta X(k)) - [X(k+1 \mod q) + \Delta X(k+1 \mod q)] \\
=&F(X(k)) +DF(X(k))\Delta X(k) + \mathcal O(\Delta X(k)^{2}) - [X(k+1 \mod q) + \Delta X(k+1 \mod q)] \\
=&E(k) +DF(X(k)) \Delta X(k) - \Delta X(k+1 \mod q) + \mathcal O(\Delta X(k)^{2})\\ 
\end{split} \end{align}  
$\Delta X(k) = P(k) \xi(k)$, and Eq.  \eqref{Ereddef} implies $DF(X(k)) P(k) = P(k+1 \mod q) \left[ \Lambda(k) + E_{red}(k) \right]$. Thus,
\begin{align} \begin{split} \label{xiWithQuadratic} E_{new}&(k) = E(k) + DF(X(k)) P(k) \xi(k) - P(k+1 \mod q)  \xi(k+1 \mod q) + \mathcal O(\xi(k)^{2}) \\
& =E(k) + P(k+1 \mod q) \left[ \Lambda(k)\xi(k) + E_{red}(k)\xi(k) - \xi(k+1 \mod q)  \right]  + \mathcal O(\xi(k)^{2}) \\
&=  P(k+1 \mod q) E_{red}(k)\xi(k) + \mathcal O(\xi(k)^{2})  \\
\end{split} \end{align} 
where the last line follows from Eq. \eqref{xiEquation}. The $\xi(k)$ solving Eq. \eqref{xiEquation} will be similar in magnitude to the $E(k)$, so $E_{red}(k) \xi(k)$ will be quadratically small, comparable to $E_{red}(k)E(k)$. Hence, as long as the $E(k)$ (and hence the $\xi(k)$ and $\Delta X(k)$) are small enough that the Taylor expansion in Eq. \eqref{deriveDeltaX} is valid, and the $\mathcal  O(\xi^{2})$ terms of the Taylor expansion are small, the new errors $E_{new}$ will be quadratically smaller than $E$. 
\end{proof}

To solve Eq. \eqref{xiEquation}, let $\xi(k) = \begin{bmatrix} \xi_{1}(k) & \xi_{2}(k)  & \xi_{3}(k) & \xi_{4}(k) \end{bmatrix}^{T}$ and $\eta(k) = \begin{bmatrix} \eta_{1}(k) & \eta_{2}(k)  & \eta_{3}(k) & \eta_{4}(k) \end{bmatrix}^{T}$ for all $k = 0, 1, \dots, q-1$. As each $\Lambda(k)$ is nearly diagonal, we can write Eq. \eqref{xiEquation}  component-wise as 
		\begin{gather}  \label{xi1} \eta_{1}(k)-T \xi_{2}(k)= \lambda_{1} \xi_{1}(k) -   \xi_{1}(k+1 \mod q) \\
	  \label{xi2} \eta_{2}(k) = \lambda_{2} \xi_{2}(k) -   \xi_{2}(k+1 \mod q) \\
	  \label{xi3} \eta_{3}(k) = \lambda_{s}(k)\xi_{3}(k) -   \xi_{3}(k+1 \mod q) \\
	  \label{xi4} \eta_{4}(k) = \lambda_{u}(k)\xi_{4}(k) -   \xi_{4}(k+1 \mod q) 
	  \end{gather}  
	  
Eqs. \eqref{xi1}--\eqref{xi4} are all of the form whose solution was discussed in Section \ref{prelimSection}. Eqs. \eqref{xi3}--\eqref{xi4} for $\xi_{3}$ and $\xi_{4}$ admit a straightforward solution using the method of Section \ref{fixedPointIter}. As for $\xi_{1}$ and $\xi_{2}$, one first solves Eq. \eqref{xi2} for the $\xi_{2}$, and then uses this to evaluate the LHS of Eq. \eqref{xi1} and solve it for $\xi_{1}$. 
Except for during the first quasi-Newton step after increasing $\varepsilon$ from $\varepsilon=0$ (for which $\lambda_{1}=\lambda_{2}=1$, as discussed in Section \ref{initCols12}), generally both $\lambda_{1}$ and $\lambda_{2}$ will be different from 1---thus facilitating the solution of Eqs. \eqref{xi1}-\eqref{xi2} using the methods of Sections \ref{fixedPointIter}-\ref{cohomHypFormulaSection}. In this first quasi-Newton step after $\varepsilon=0$, on the other hand, one should apply the procedure of Section \ref{cohomSection} to Eqs. \eqref{xi1}--\eqref{xi2} even if the sum $\sum_{k=0}^{q-1} \eta_{2}(k)$ is not zero; later quasi-Newton steps will be able to correct the errors further. In such (rare) cases of $\lambda_{1}=\lambda_{2}=1$, after solving Eq. \eqref{xi2} for a preliminary solution $\bar \xi_2$ using the method of Section \ref{cohomSection}, the sum of the LHS of Eq. \eqref{xi1} can be made zero before solving for $\xi_{1}$ by setting each final $\xi_2(k) = \bar\xi_2(k) + \frac{1}{qT}\sum_{k=0}^{q-1} [\eta_{1}(k)-T\bar \xi_2(k)]$; recall from Section \ref{cohomSection} that if $\bar\xi_2(k)$ are a solution to Eq. \eqref{xi2}, then so are $\bar\xi_2(k)+C$ for any $C \in \mathbb{C}$.

 Finally, with all four components of $\xi$ solved, we compute $X_{c}(k) = X(k) + P(k) \xi(k)$. Recall that $P(k) \in \mathbb{C}^{4 \times 4}$ and $\xi(k) \in \mathbb{C}^{4}$ may have non-real entries; hence, it can be that $X_{c}(k) \in \mathbb{C}^{4}$. While one could allow $X(k)$ to take values in $\mathbb{C}^{4}$ and simply set each corrected $X(k)$ equal to $X_c(k)$, in practice we have found that setting each corrected $X(k)$ equal to $\text{Re} \, X_{c}(k)$ works as well, as the imaginary parts $\text{Im} \,  X_{c}(k)$ are usually very small; this allows the $X(k)$ to remain in $\mathbb{R}^{4}$, which may make the evaluation of $F(X(k))$ and $DF(X(k))$ simpler.  Either way, this concludes the $X$ correction part of the quasi-Newton step. 
 
\begin{remark}
There are methods of numerically solving for $\Delta X$ without using $P$ or $\Lambda$, such as standard multi-shooting methods \citep{Guckenheimer2007}. For our 4D phase space, these methods involve solving Eq. \eqref{deriveDeltaX} (with quadratic terms dropped, and $E_{new}=0$) simultaneously for all $\Delta X(k)$. This requires solving a $4q \times 4q$ linear system at each correction step. 
%Gaussian elimination applied to this will hence have a computational complexity of $O(q^{3})$. 
However, by using $P$ and the nearly diagonal $\Lambda$, we decouple the equations in a manner that lends itself to non-matrix-based solution methods, and avoid this large dimensional system. 
%The complexity of our quasi-Newton method is only $O(q)$. Furthermore, our method gives not just $X$, but also the Floquet matrices $P$ and $\Lambda$. The most expensive step in our method is the computation (using numerical integration) of $F$ and $DF$ on the grid of $q$ different points $X(k)$, which is easy to parallelize on a computer. 
\end{remark}

\subsection{Quasi-Newton Step for Correcting $P$ and $\Lambda$} \label{Pstep}

Using the newly-corrected $X(k)$, we first recompute $DF(X(k))$ and then $E_{red}(k)$ for $k = 0, 1, \dots, q-1$ using Eq. \eqref{Ereddef}. Finding $\Delta P(k)$ and $\Delta \Lambda(k)$ to cancel $E_{red}$ then follows a similar methodology as $\Delta X$. Let $\Delta P (k) = P(k) Q(k)$; we will now solve for $Q(k), \Delta \Lambda(k) \in \mathbb{C}^{4 \times 4}$ satisfying
\begin{equation}  \label{qEquation} -E_{red}(k) = \Lambda(k) Q(k) - Q(k + 1 \mod q) \Lambda(k) - \Delta \Lambda(k)\end{equation}  
\begin{claim*}
For $E_{red}$ sufficiently small, if the $Q(k)$ and $\Delta \Lambda(k)$ solve Eq. \eqref{qEquation} for all $k = 0, 1, \dots, q-1$, then adding $\Delta P(k) = P(k) Q(k)$ and $\Delta \Lambda(k)$ respectively to each $P(k)$ and $\Lambda(k)$ reduces $E_{red}$ quadratically.
\end{claim*}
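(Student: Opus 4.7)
The plan is to parallel the proof of the preceding claim for the $X$-correction: substitute the corrected $P$ and $\Lambda$ into the definition \eqref{Ereddef} of $E_{red}$, Taylor-expand in the small quantities $Q$ and $\Delta \Lambda$, invoke \eqref{qEquation} to cancel the linear terms, and observe that the remainder is a product of two small quantities. Concretely, I would first write the new reduced error as
\begin{equation*} E_{red,new}(k) = \bigl(P(k{+}1) + P(k{+}1)Q(k{+}1)\bigr)^{-1} DF(X(k)) \bigl(P(k) + P(k)Q(k)\bigr) - \bigl(\Lambda(k) + \Delta\Lambda(k)\bigr), \end{equation*}
factor $P(k{+}1)\bigl(I+Q(k{+}1)\bigr)$ on the left, and expand its inverse as a Neumann series $(I+Q(k{+}1))^{-1} = I - Q(k{+}1) + \mathcal O(Q^{2})$, which is valid whenever $\|Q(k{+}1)\|<1$; this in turn holds for $E_{red}$ sufficiently small, as we explain below.

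Next, I would use the identity $P^{-1}(k{+}1)DF(X(k)) P(k) = \Lambda(k)+E_{red}(k)$ coming from \eqref{Ereddef} to rewrite the expression as
\begin{equation*} E_{red,new}(k) = \bigl(I - Q(k{+}1) + \mathcal O(Q^2)\bigr)\bigl(\Lambda(k)+E_{red}(k)\bigr)\bigl(I + Q(k)\bigr) - \Lambda(k) - \Delta\Lambda(k). \end{equation*}
Collecting by orders, the terms linear in $(Q,E_{red},\Delta\Lambda)$ are precisely
\begin{equation*} E_{red}(k) + \Lambda(k) Q(k) - Q(k{+}1)\Lambda(k) - \Delta\Lambda(k), \end{equation*}
which vanishes identically by hypothesis \eqref{qEquation}. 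What remains are the cross products $-Q(k{+}1)E_{red}(k)$, $E_{red}(k)Q(k)$, $-Q(k{+}1)\Lambda(k)Q(k)$, together with higher-order Neumann tails, all of which are manifestly quadratic in $(Q,E_{red})$ (noting that $\Lambda$ has bounded norm independent of $E_{red}$).

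The one step requiring care is the claim that $\|Q\|$ and $\|\Delta\Lambda\|$ are themselves $\mathcal O(\|E_{red}\|)$, so that the quadratic remainder is genuinely $\mathcal O(\|E_{red}\|^{2})$ and the Neumann expansion is legitimate. I would justify this by appealing to the decoupling to be carried out in the next subsection via Eqs. \eqref{E_LC}--\eqref{E_UU}, where \eqref{qEquation} is split block-by-block into scalar equations of the three forms treated in Section \ref{prelimSection}; each such equation admits a bounded solution operator (either the geometric-series formula of Section \ref{cohomHypFormulaSection}, the contraction of Section \ref{fixedPointIter}, or the zero-average inversion of Section \ref{cohomSection} once $\Delta\Lambda$ has absorbed the appropriate averages), giving $\|Q(k)\|,\|\Delta\Lambda(k)\| \lesssim \max_{k}\|E_{red}(k)\|$ with a constant depending only on $\Lambda$. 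This is the main obstacle, and it is the reason the diagonal entries of $\Delta\Lambda$ are introduced: without them, the $\lambda_{a}=\lambda_{b}=1$ diagonal pieces of \eqref{qEquation} would not admit any solution when the corresponding right-hand side has nonzero mean. Once this bound on $Q$ and $\Delta\Lambda$ is in hand, combining it with the cancellation of linear terms above yields $\|E_{red,new}\| = \mathcal O(\|E_{red}\|^{2})$, as claimed.
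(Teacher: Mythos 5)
Your proposal is correct and follows essentially the same route as the paper's proof: substitute the corrected $P(k)+P(k)Q(k)$ and $\Lambda(k)+\Delta\Lambda(k)$ into Eq.~\eqref{Ereddef}, use $P^{-1}(k{+}1 \bmod q)DF(X(k))P(k)=\Lambda(k)+E_{red}(k)$, cancel the linear terms via Eq.~\eqref{qEquation}, and bound the remainder as quadratic, the only cosmetic difference being that the paper keeps the exact factor $[I+Q(k{+}1 \bmod q)]^{-1}$ acting on $E_{red}(k)Q(k)-Q(k{+}1 \bmod q)\Delta\Lambda(k)$ rather than Neumann-expanding it. Your added justification that $\|Q\|,\|\Delta\Lambda\|=\mathcal O(\|E_{red}\|)$ via the decoupled solution operators of Eqs.~\eqref{E_LL}--\eqref{E_UU} is a reasonable fleshing-out of what the paper simply asserts ("$Q$ and $\Delta\Lambda$ here will be similar in magnitude to $E_{red}$").
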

\begin{proof}
Substitute $P(k)+P(k)Q(k)$ and $\Lambda(k)+\Delta \Lambda(k)$ into Eq. \eqref{bundleEquations} to define
\begin{align} \begin{split} 
\mathcal{E}(k)=DF(K(&\theta)) [P(k)+P(k)Q(k)] \\
&- [P(k+1 \mod q)+P(k+1 \mod q)Q(k+1 \mod q)] [\Lambda(k)+\Delta \Lambda(k)] \\
\end{split} \end{align}  
Using $E_{red}(k) = P^{-1}(k+1 \mod q)DF(X(k)) P(k) -  \Lambda(k)$, we then find that
\begin{align} \begin{split} \label{bundleDerive}
P(k+&1  \mod q)^{-1} \mathcal{E}(k) =E_{red}(k)+P(k+1 \mod q)^{-1}DF(X(k))P(k)Q(k) \\
&\quad \quad \quad \quad \quad \quad \, \, \, \quad \quad \quad \quad \quad \quad \quad \quad - Q(k+1 \mod q) [\Lambda(k)+\Delta \Lambda(k)] - \Delta \Lambda(k) \\
  &=E_{red}(k)+[\Lambda(k)+E_{red}(k) ]Q(k) - Q(k+1 \mod q) [\Lambda(k)+\Delta \Lambda(k)] - \Delta \Lambda(k) \\
    &=E_{red}(k) Q(k) - Q(k+1 \mod q) \Delta \Lambda(k) 
\end{split} \end{align}  
where the last line follows from the one before it due to Eq. \eqref{qEquation}. Evaluating Eq. \eqref{Ereddef} with $P(k)+P(k)Q(k)$ and $\Lambda(k)+\Delta \Lambda(k)$ in place of $P(k)$ and $\Lambda(k)$ and denoting the result as $E_{red,new}$, we have 
\begin{align} \begin{split} E_{red,new}(k)&=[P(k+1 \mod q)+P(k+1 \mod q)Q(k+1 \mod q)]^{-1}\mathcal{E}(k) \\
&=[I+Q(k+1 \mod q)]^{-1}P(k+1 \mod q)^{-1}\mathcal{E}(k) \\
&=[I+Q(k+1 \mod q)]^{-1} [E_{red}(k) Q(k) - Q(k+1 \mod q) \Delta \Lambda(k)]  \\
\end{split} \end{align}  
$Q$ and $\Delta \Lambda$ here will be similar in magnitude to $E_{red}$. Hence, if $E_{red}$ is small, then $E_{red,new}$ will be quadratically smaller of similar order as $E_{red}^{2}$.  
\end{proof}

Since each $\Lambda(k)$ is nearly diagonal, the equations for the different entries of $Q(k)$ and $\Delta \Lambda(k)$ following from Eq. \eqref{qEquation} are almost completely decoupled from each other. Write 
\begin{equation} \begin{gathered} \label{Ecomponents} E_{red}(k) = \begin{bmatrix}
E_{LL}(k) &  E_{LC}(k)    & E_{LS}(k)  & E_{LU}(k)  \\ E_{CL}(k) &  E_{CC}(k)    & E_{CS}(k)  & E_{CU}(k) \\ E_{SL}(k) & E_{SC}(k)    & E_{SS}(k)  & E_{SU}(k) \\ E_{UL}(k) &  E_{UC}(k)    & E_{US}(k)  & E_{UU}(k) \end{bmatrix} \\ 
Q(k) = \begin{bmatrix}
Q_{LL}(k)&  Q_{LC}(k)    & Q_{LS}(k)  & Q_{LU}(k)  \\ Q_{CL}(k) &  Q_{CC} (k)   & Q_{CS}(k)  & Q_{CU}(k) \\ Q_{SL}(k) & Q_{SC} (k)   & Q_{SS} (k) & Q_{SU}(k) \\ Q_{UL}(k) &  Q_{UC}(k)    & Q_{US}(k)  & Q_{UU}(k) \end{bmatrix} \quad \Delta \Lambda(k) = \begin{bmatrix}
\Delta \lambda_1 &  \Delta T   & 0 & 0 \\ \Delta S &  \Delta \lambda_2   & 0 & 0 \\ 0 & 0  & \Delta \lambda_s(k) & 0 \\ 0 &  0 & 0 & \Delta \lambda_u(k) \end{bmatrix} \end{gathered} \end{equation}
Note that unlike the case for tori, the first column of $Q$ and (1,1), (2,1), and (2,2) entries of $\Delta \Lambda$ are nonzero here. We can then write Eq. \eqref{qEquation} entry by entry and rearrange terms slightly to get 16 scalar equations 
\begin{equation}  
\label{E_LL} \Delta \lambda_1 -E_{LL}(k) -T Q_{CL}(k)= \lambda_1 Q_{LL}(k) - \lambda_1 Q_{LL}(k+1 \mod q) 
\end{equation}  
\begin{equation}  
\label{E_LC} \Delta T -E_{LC}(k)-T Q_{CC}(k) + T Q_{LL}(k+1 \mod q) = \lambda_1 Q_{LC}(k) - \lambda_2 Q_{LC}(k+1 \mod q)  
\end{equation}  
\begin{equation}  
 \label{E_LS} -E_{LS}(k) -T Q_{CS}(k)= \lambda_1 Q_{LS}(k) - \lambda_{s}(k) Q_{LS}(k+1 \mod q) 
\end{equation}  
\begin{equation}  
 \label{E_LU} -E_{LU}(k) -T Q_{CU}(k)= \lambda_1 Q_{LU}(k) - \lambda_{u}(k) Q_{LU}(k+1 \mod q)  
\end{equation}  
\begin{equation}  
\label{E_CL} \Delta S -E_{CL}(k) = \lambda_2 Q_{CL}(k) - \lambda_1 Q_{CL}(k+1 \mod q) 
\end{equation}  
\begin{equation}  
 \label{E_CC} \Delta \lambda_2 -E_{CC}(k) + T Q_{CL}(k+1 \mod q) = \lambda_2 Q_{CC}(k) - \lambda_2 Q_{CC}(k+1 \mod q)  
\end{equation}  
\begin{equation}  
 \label{E_CS} -E_{CS}(k) = \lambda_2 Q_{CS}(k) - \lambda_{s}(k) Q_{CS}(k+1 \mod q) 
\end{equation}  
\begin{equation}  
 \label{E_CU} -E_{CU}(k) = \lambda_2 Q_{CU}(k) - \lambda_{u}(k) Q_{CU}(k+1 \mod q)  
\end{equation}  
\begin{equation}  
\label{E_SL} -E_{SL}(k) = \lambda_{s}(k) Q_{SL}(k) - \lambda_1 Q_{SL}(k+1 \mod q) 
\end{equation}  
\begin{equation}  
  \label{E_SC} -E_{SC}(k) + T Q_{SL}(k+1 \mod q) = \lambda_{s}(k) Q_{SC}(k) - \lambda_2 Q_{SC}(k+1 \mod q) 
\end{equation}  
\begin{equation}  
 \label{E_SS} \Delta \lambda_{s}(k) -E_{SS}(k) = \lambda_{s}(k) Q_{SS}(k) - \lambda_{s}(k) Q_{SS}(k+1 \mod q)  
\end{equation}  
\begin{equation}  
 \label{E_SU} -E_{SU}(k) = \lambda_{s}(k) Q_{SU}(k) - \lambda_{u}(k) Q_{SU}(k+1 \mod q)  
\end{equation}  
\begin{equation}  
\label{E_UL} -E_{UL}(k) = \lambda_{u}(k) Q_{UL}(k) - \lambda_1 Q_{UL}(k+1 \mod q) 
\end{equation}  
\begin{equation}  
  \label{E_UC} -E_{UC}(k) + T Q_{UL}(k+1 \mod q) = \lambda_{u}(k) Q_{UC}(k) - \lambda_2 Q_{UC}(k+1 \mod q) 
\end{equation}  
\begin{equation}  
 \label{E_US} -E_{US}(k) = \lambda_{u}(k) Q_{US}(k) - \lambda_{s}(k) Q_{US}(k+1 \mod q) 
\end{equation}  
\begin{equation}  
 \label{E_UU} \Delta \lambda_{u}(k) -E_{UU}(k) = \lambda_{u}(k) Q_{UU}(k) - \lambda_{u}(k) Q_{UU}(k+1 \mod q) 
\end{equation}  
First of all, we solve Eqs.  \eqref{E_CS}, \eqref{E_CU}, \eqref{E_SL}, \eqref{E_SU},  \eqref{E_UL}, and \eqref{E_US} using the method of Section \ref{fixedPointIter}; the resulting $Q_{CS}$, $Q_{CU}$, $Q_{SL}$ and $Q_{UL}$ then enable the solution of Eqs. \eqref{E_LS}, \eqref{E_LU}, \eqref{E_SC} and \eqref{E_UC} by the same method as well. The solutions of Eqs. \eqref{E_SS} and \eqref{E_UU} are non-unique; we simply choose the $Q_{SS}(k)=Q_{UU}(k)=0$ with $\Delta \lambda_{s}(k)=E_{SS}(k)$ and $\Delta \lambda_{u}(k)=E_{UU}(k)$ for all $k = 0, 1, \dots, q-1$ (see Remark \ref{keepLambdaConstant} at the end of this subsection for another potential solution). Finally, we are left with Eqs. \eqref{E_LL}, \eqref{E_LC}, \eqref{E_CL}, and \eqref{E_CC}. 

We start with Eq. \eqref{E_CL}. Note that if $\lambda_{1}, \lambda_{2} \neq 1$, then Eq. \eqref{E_CL} is underdetermined; one can choose any value of $\Delta S$ and then find a valid solution for the $Q_{CL}(k)$ as well. However, there is a choice of $\Delta S$ that is best for numerical stability. To see this, sum both sides of Eq. \eqref{E_CL} over all $k$ and rearrange terms to get
\begin{equation} \label{Q_CL_average}   \frac{1}{q} \sum_{k=0}^{q-1} Q_{CL}(k) = \frac{1}{\lambda_2- \lambda_1} \left[\Delta S - \frac{1}{q} \sum_{k=0}^{q-1}  E_{CL}(k) \right] \end{equation}
Now, recall that in the $\varepsilon=0$ case, $\lambda_1=\lambda_2=1$. While this is almost never the case for $\varepsilon > 0$ (except for during the first quasi-Newton step after increasing $\varepsilon$ from 0), generally $\lambda_1$ and $\lambda_2$ do remain \emph{near} 1. Thus, the $\frac{1}{\lambda_2 - \lambda_1}$ term above will be quite large in magnitude, making the average of $Q_{CL}$ large as well unless the RHS is zero. Thus, to avoid large numerical values of $Q_{CL}$ (and $\Delta P$), one should set $\Delta S = \frac{1}{q} \sum_{k=0}^{q-1}  E_{CL}(k)$, and then solve for $Q_{CL}$ by the method of Section \ref{fixedPointIter} or \ref{cohomHypFormulaSection}. This choice of $\Delta S$ also enables the solution of Eq. \eqref{E_CL} for $Q_{CL}$ if $\lambda_1=\lambda_2=1$, in which case the method of Section \ref{cohomSection} is used instead. 

Now, the resulting $Q_{CL}(k)$ can be back-substituted into Eqs. \eqref{E_LL} and \eqref{E_CC}. After dividing them through by $\lambda_1$ and $\lambda_2$, respectively, both equations have the form of Eq. \eqref{cohomological}. Thus, the sum of the LHS of both across all $k$ must be made zero; for this, set $\Delta \lambda_1 = \frac{1}{q} \sum_{k=0}^{q-1} [E_{LL}(k) +T Q_{CL}(k)]$ and $\Delta \lambda_2 = \frac{1}{q} \sum_{k=0}^{q-1} [E_{CC}(k) - T Q_{CL}(k+1 \mod q)]$. Then, apply the method of Section \ref{cohomSection} to solve Eqs. \eqref{E_LL} and \eqref{E_CC} for $Q_{LL}$ and $Q_{CC}$. These in turn should be back-substituted into Eq. \eqref{E_LC}, in which one should set $\Delta T = \frac{1}{q} \sum_{k=0}^{q-1} [E_{LC}(k)+T Q_{CC}(k) - T Q_{LL}(k+1 \mod q)]$ for similar reasons as the $\Delta S$ case of Eq. \eqref{E_CL}. This then allows one to solve Eq. \eqref{E_LC} for $Q_{LC}$ using the methods of Section \ref{fixedPointIter} or \ref{cohomHypFormulaSection} (if $\lambda_{1}, \lambda_2 \neq 1$) or Section \ref{cohomSection} (in the rare $\lambda_1 = \lambda_2 = 1$ case). This completes the solution of $Q(k)$ and $\Lambda(k)$ for all $k$. 

Once $Q$ and $\Delta \Lambda$ are known, we set $P_c(k) = P(k) + P(k) Q(k)$ and $\Lambda_c(k) = \Lambda(k) + \Delta \Lambda(k)$. While these $P_c$ and $\Lambda_c$ should quadratically reduce the error $E_{red}$ when substituted into Eq. \eqref{Ereddef}, as desired, note that the (2,1) entry $\Delta S$ of $\Delta \Lambda$ was nonzero. Hence, even if all $\Lambda(k)$ were in the desired form  of Eq. \eqref{lambdaForm}, the corrected $\Lambda_c(k)$ generally will not be. We thus need an extra step to get a new $\Lambda(k)$ of the required form.

\begin{remark} \label{keepLambdaConstant}
If the $\lambda_{s}(k)$ and $\lambda_{u}(k)$ are constant (independent of $k$), we can choose the non-unique solutions of Eqs. \eqref{E_SS} and \eqref{E_UU} such that they remain constant. In particular, choose $\Delta \lambda_s(k) = \frac{1}{q}\sum_{i=0}^{q-1} E_{SS}(i)$ and $\Delta \lambda_u(k) = \frac{1}{q}\sum_{i=0}^{q-1} E_{UU}(i)$ across all $k$, and solve for $Q_{SS}$ and $Q_{UU}$ using the method of Section \eqref{cohomSection} (after dividing through by $\lambda_s$ or $\lambda_{u}$). Our experience was that this choice of solution negatively affected the numerical stability of our method, however; thus, we did not keep $\lambda_{s}$ and $\lambda_{u}$ constant in our implementation.
\end{remark}

\subsubsection{Transforming $\Lambda_c(k)$ to find $\Lambda(k)$ } \label{schurSection}

To get new $P(k)$ and $\Lambda(k)$ which quadratically reduce the error $E_{red}$ to a level similar to $P_c$ and $\Lambda_c$, but with the $\Lambda(k)$ still having the form of Eq. \eqref{lambdaForm}, one can leverage the (complex) \emph{Schur decomposition} \cite{Horn_Johnson_1985}: given an arbitrary square matrix $A \in \mathbb{C}^{n \times n}$, $A$ can be expressed as $A = V U V^{-1}$ for some upper triangular matrix $U \in \mathbb{C}^{n \times n }$ and some unitary matrix $V \in \mathbb{C}^{n \times n}$. MATLAB, Julia, and many other programming languages have functions built-in or part of well-known libraries to compute this matrix decomposition. 

To apply the Schur decomposition to our case, first note that each $\Lambda_c(k)$ has the block-diagonal form
\begin{equation} 
\Lambda_c(k) = \begin{bmatrix}
  \lambda_{1,c} &  T_{c} & 0 & 0 \\
\Delta S & \lambda_{2,c} & 0 & 0 \\
  0 & 0 & \lambda_{s,c}(k) & 0 \\
  0 & 0 & 0 & \lambda_{u,c}(k)
\end{bmatrix} = \left[ \begin{array}{c|c}
  A_{2 \times 2} & \mathbf{0}_{2 \times 2} \\
  \hline
  \mathbf{0}_{2 \times 2} & B_{2\times2}
\end{array} \right]
\end{equation}
with $A$ and $B$ being defined as the top left and bottom right $2\times 2$ blocks of $\Lambda_c$ as shown. Now, find a Schur decomposition of the $2\times2$ block $A = V_{1} U V_{1}^{-1}$ so $U \in \mathbb{C}^{2\times2}$ is upper triangular. Then, define
\begin{equation} 
V = \left[ \begin{array}{c|c}
  V_{1} & \mathbf{0}_{2 \times 2} \\
  \hline
  \mathbf{0}_{2 \times 2} & I_{2\times2}
\end{array} \right]
\end{equation}
Finally, set each new $P(k) = P_{c}(k) V$ and $\Lambda(k) = V^{-1} \Lambda_{c}(k) V$. The resulting $\Lambda(k)$ will all have top-left block $V_{1}^{-1} A V_{1} = U$, which is upper triangular as required by Eq. \eqref{lambdaForm}, while the rest of the matrix will remain identical to $\Lambda_c$ and continue to match Eq. \eqref{lambdaForm}. Moreover, with the new $P$ and $\Lambda$, the new $E_{red}(k)$ will be
\begin{equation} \label{newEred}
P^{-1}(k+1 \mod q)DF(X(k)) P(k) -  \Lambda(k) = V^{-1} \left[ P_{c}^{-1}(k+1 \mod q)DF(X(k)) P_{c}(k) -  \Lambda_{c}(k)  \right] V
\end{equation}
so that if the expression in brackets on the RHS of Eq. \eqref{newEred} is quadratically smaller than the old $E_{red}(k)$---as was indeed achieved in Section \ref{Pstep}---the new $E_{red}$ error will also be quadratically smaller than the old one. This hence concludes the quasi-Newton correction step for $P(k)$ and $\Lambda(k)$. 

\subsection{After the Quasi-Newton Step} \label{afterStep}

After completing a  quasi-Newton correction step for $P$ and $\Lambda$ as described in Section \ref{Pstep}, one should recompute $E(k)$ and $E_{red}(k)$ using the new $X(k)$, $P(k)$, $\Lambda(k)$ and Eqs. \eqref{Edef}--\eqref{Ereddef}. If these errors are not yet within the desired tolerance, one should go back to the quasi-Newton step for correcting the torus parameterization $X(k)$ from Section \ref{Xstep} and repeat the entire method until the $E(k)$ and $E_{red}(k)$ are within tolerance. In practice, we use the supremum norm $\| E \| = \max_{k = 0, 1, \dots, q-1} \|E(k)\|_{\infty}$ and $\| E_{red} \| = \max_{k = 0, 1, \dots, q-1} \| E_{red} (k)\|_{\infty}$ to measure the size of these errors, which we have found to work well. 

Once the quasi-Newton method has converged to a solution $X_{\varepsilon}, P_{\varepsilon}, \Lambda_{\varepsilon}$ for an intermediate continuation parameter $\varepsilon = \varepsilon_{i} < \varepsilon_{f}$, for numerical stability purposes, we have found that it is beneficial  to apply the procedure of Section \ref{constantLambda} to construct a new solution $X_{\varepsilon}$, $\tilde P_{\varepsilon}$, $\tilde \Lambda_{\varepsilon}$ in which the stable and unstable multipliers contained in $\tilde \Lambda_{\varepsilon}$ are independent of $k$. The resulting $\tilde \Lambda_{\varepsilon}(k)$ will be independent of $k$ as well, which will ensure better numerical behavior in the following continuation step. 

Finally, if we have found a solution $X_{\varepsilon_{f}}, P_{\varepsilon_{f}}, \Lambda_{\varepsilon_{f}}$ to Eqs. \eqref{invariance}-\eqref{bundleEquations} (with $\lambda_1, \lambda_2 \neq 1$ and $\Lambda_{\varepsilon_{f}}$ independent of $k$) for the final desired value of $\varepsilon = \varepsilon_{f}$, we can now find a solution $X_{\varepsilon_{f}}, \bar P_{\varepsilon_{f}}, \bar \Lambda_{\varepsilon_{f}}$ to Eq. \eqref{floquetEquation} with $ \bar \Lambda_{\varepsilon_{f}}$ fully diagonal as well. For this, first find the matrix $V_{D}$ that diagonalizes $\Lambda_{\varepsilon_{f}} = V_{D} D V_{D}^{-1}$ with $D$ diagonal. Then, simply set all $\bar P_{\varepsilon_{f}}(k)  = P_{\varepsilon_{f}}(k) V_{D}$ and $\bar \Lambda_{\varepsilon_{f}} = V_{D}^{-1}  \Lambda_{\varepsilon_{f}} V_{D} = D$; as described at the end of Section \ref{quasiNewton}, the resulting $X_{\varepsilon_{f}}, \bar P_{\varepsilon_{f}}$, and $\bar \Lambda_{\varepsilon_{f}}$ will satisfy Eq. \eqref{bundleEquations} and thus Eq. \eqref{floquetEquation} as well. This fully-diagonal $\bar \Lambda_{\varepsilon_{f}}$ and corresponding $\bar P_{\varepsilon_{f}}$ are useful for computing stable/unstable manifolds (and their invariant submanifolds) for the SPO $X_{\varepsilon_{f}}$. An example of this will be described in Section \ref{parambigsection} for separatrices of certain SPOs. 

\subsection{Some Final Remarks} \label{remarksSection}

In similar quasi-Newton methods for computing invariant tori (e.g., \cite{kumar2022}), the (1,1), (2,1), and (2,2) terms of $\Delta \Lambda$ are always set to zero, so that the corresponding entries of $\Lambda$ remain as 1, 0, and 1 respectively throughout the entire correction procedure. This is made possible by an equation similar to Eq. \eqref{col1Lambda} coupled with symplectic geometric considerations akin to those of Section \ref{initCols12} for the $\varepsilon=0$ case. However, for the subharmonic periodic orbits of this paper, the resonant $F_0$-invariant tori from which they emerge generically break down for all $\varepsilon >0$. Instead, the SPO gets non-unity Floquet multipliers $\lambda_1$ and $\lambda_2$, whose computation requires nonzero $\Delta \lambda_1$ and $\Delta \lambda_2$; these non-unity multipliers ameliorate the zero-denominators problem faced by torus parameterization methods for rational $\omega$. Nevertheless, to better handle the computations when $\lambda_1$ and $\lambda_2$ are \emph{near} 1---which is usually the case, and occurs even more pronouncedly whenever the SPO's $\lambda_1$-$\lambda_2$ Floquet multiplier pair is going through a stability transition---we set $\Delta S  \neq 0 $ and $\Delta T \neq 0$ as well. In fact, if an SPO is transitioning from a real multiplier pair $\lambda_1, \lambda_2 \in \mathbb{R}$ at the previous continuation $\varepsilon$ value to a complex elliptic pair at the current $\varepsilon$, one \emph{must} take $\Delta S \neq 0$ to capture this behavior and converge. 

To see why, first note that Eq. \eqref{cohomHyp} and \eqref{cohomological} both admit only real solutions $u(k)$ if $\lambda_a(k), \lambda_b(k), b(k) \in \mathbb{R}$ for all $k$ (barring the artificial addition of a non-real constant to a solution of Eq. \eqref{cohomological}). Now, suppose that at some stage of a continuation procedure, all $P(k)$ have all real entries, and $\lambda_1$, $\lambda_2$, $T$, $\lambda_s(k)$, and $\lambda_{u}(k)$ all have real nonzero values; this is always the case, for example,  during the first quasi-Newton step after $\varepsilon = 0$ when $\lambda_1 = \lambda_2 = 1$. Then, if one applies the procedure of Section \ref{Pstep} to Eqs. \eqref{E_LL}-\eqref{E_UU}---with the exception of taking $\Delta S = 0$---the solutions will all be real. Then, all $P_c(k) = P(k)+P(k)Q(k)$ and $\Lambda_c(k) = \Lambda(k) + \Delta \Lambda(k)$ will also have real entries---and since $\Delta S=0$, the new $\Lambda_c(k)$ will already be of the form Eq. \eqref{lambdaForm}, precluding the need for the Schur procedure of Section \ref{schurSection}. Thus, $P_c$ and $\Lambda_c$ themselves will form the new $P$ and $\Lambda$ used in the next step, with all real entries---as was the case at the beginning of the previous step. However, this means that the multipliers $\lambda_1$ and $\lambda_2$ will be real no matter how many quasi-Newton iterations are applied---which will prevent convergence if the SPO at hand in fact has elliptic multipliers. Thus, it is imperative to take $\Delta S \neq 0$, which makes the top-left block of $\Lambda_c(k)$ non-triangular and forces the need for the Schur procedure---which can yield a complex $\Lambda(k)$ from such a real $\Lambda_c(k) $. 

The quasi-Newton method just presented also works in the non-generic case that the resonant $F_0$-invariant torus containing the SPO does not break down for $\varepsilon > 0$; this can happen, for example, if the maps $F_{\varepsilon}$ are all flow maps of a 2 DOF autonomous Hamiltonian system, rather than a 2.5 DOF perturbation, so that the unstable flow-periodic orbits for $\varepsilon = 0$ persist as flow-periodic orbits for $\varepsilon > 0$ as well. In such a case, the SPO will lie on an $F_{\varepsilon}$-invariant torus even for $\varepsilon > 0$; nevertheless, the quasi-Newton method has been found to work in such settings as well with only a minor change. Note that such an SPO will have $\lambda_1 = \lambda_2 = 1$ for all $\varepsilon$, due to an equation exactly analogous to Eq. \eqref{col1Lambda}. Thus, to compute SPOs in such systems, the main change to the quasi-Newton method is that one should simply take $\Delta \lambda_1 = \Delta \lambda_2 = \Delta S = 0$ throughout the procedure, so that $\lambda_1$ and $ \lambda_2$ will remain at 1. It will be necessary to use the method of Section \ref{cohomSection} more frequently, and it will not always be possible to force the sum of the RHS of Eq. \eqref{cohomological} to zero. Nevertheless, the method converges in practice, likely due to similar vanishing lemmas as those of \cite{fontichLlaveSire, kumar2022} for tori.

Finally, we note that while convergence of this new quasi-Newton method is not yet rigorously proven, it has been used successfully for numerical continuation of subharmonic periodic orbits in a number of cases, some of which are presented in Section \ref{numResults} for various examples from the CCR4BP. We found that the method worked in many cases where the stability of the $\lambda_1$-$\lambda_2$ multiplier pair changed from elliptic to hyperbolic or vice versa as $\varepsilon$ changed. The method did fail in some other cases as those multipliers (or their $q$\textsuperscript{th} powers) approached 1 during continuation by $\varepsilon$; further investigation is required to determine whether this is due to a failure of the method to ``cross'' a stability transition, or if this indicates a fold bifurcation of the SPOs being studied such that no corresponding SPO exists past a certain $\varepsilon$ value. While the method presented is not yet designed to handle such bifurcations where $\varepsilon$ takes a maximum along the solution family, one could potentially use a ``hybrid'' algorithm to continue orbits past such points, using standard multi-shooting to solve for $X_{\varepsilon}(k)$ alongside the quasi-Newton step of Section \ref{Pstep} to compute $P_{\varepsilon}$ and $\Lambda_{\varepsilon}$.

\section{Parameterization Method for Separatrices} \label{parambigsection}

Suppose that for some desired value of $\varepsilon = \varepsilon_{f}$, as described in Section \ref{problemStateSection} (see also the end of Section \ref{afterStep}), we have a solution $X(k)$, $P(k)$, $\Lambda$ to Eqs. \eqref{invarianceEquation}--\eqref{floquetEquation} for $k = 0, 1, \dots, q-1$ with $\Lambda \in \mathbb{C}^{4 \times 4}$ diagonal and independent of $k$ (we again will omit subscripts $\varepsilon$ in this section). Let the diagonal entries of $\Lambda$ be $\lambda_1$, $\lambda_2$, $\lambda_s$, and $\lambda_u$; the $q$\textsuperscript{th} powers of these entries will be the eigenvalues of the relevant SPO's monodromy matrix  $DF^{q}(X(k))$ with columns of $P(k)$ being their corresponding eigenvectors, as also explained in Section \ref{problemStateSection}. 

Now, in accordance with the conventions established in Section \ref{quasiNewton}, let $\lambda_s$ and $\lambda_u$ represent stable and unstable SPO Floquet multipliers inherited and continued from the $\varepsilon=0$ SPO $X_0$. As in the $\varepsilon = 0$ case, the columns of $P(k)$ corresponding to $\lambda_s$ and $\lambda_u$ will be transverse to the 2D cylindrical NHIM $\Xi_{\varepsilon}$ containing the SPO. However, $\lambda_1$ and $\lambda_2$ will then be multipliers corresponding to the linearized dynamics of $F$ \emph{restricted to} $\Xi_{\varepsilon}$. Thus, to understand the effect of the SPO $X(k)$ on these internal NHIM dynamics, one should study the effect of these Floquet multipliers. In particular, if $\lambda_1$ and $\lambda_2$ are real and hyperbolic (without loss of generality, say $0 < \lambda_1 <1$ and $\lambda_{2} = \lambda_{1}^{-1}$), then their corresponding Floquet directions will give rise to 1D stable/unstable manifolds of the SPO \emph{inside} $\Xi_{\varepsilon}$ for the dynamics restricted to this NHIM. We will refer to these 1D stable/unstable manifolds inside $\Xi_{\varepsilon}$ as \emph{separatrices}. Intersections of separatrices between different SPOs destroy invariant tori inside NHIMs \cite{kumar2023aas}; hence, we would like to compute and study them.

Let $\bold{v}_{1}(k) \in \mathbb{R}^{4}$ be the Floquet direction at $X(k)$ corresponding to $\lambda_1$, and similarly for $\bold{v}_{2}(k) \in \mathbb{R}^{4}$ and $\lambda_2$; these will respectively be the first and second columns of each $P(k)$. Then, $\bold{v}_{1}(k)$ and $\bold{v}_{2}(k)$ will form linear approximations to the stable/unstable separatrices at each SPO point $X(k)$. However, the traditional method of computing globalized stable/unstable manifolds---which involves taking points along a small segment of the linear manifold approximation near each $X(k)$ and applying $F$ repeatedly to them---will not work here. Recall that these separatrices are contained in a NHIM; repeatedly applying $F$ to points along the separatrices' linear approximations will thus result in these points quickly diverging \emph{away} from the NHIM, in the direction of strong transverse expansion corresponding to $\lambda_{u}$. Hence, one needs to be able to compute these separatrices as far as possible from the base points $X(k)$ \emph{without} repeatedly applying $F$. 

To address this problem, we compute high-order Taylor polynomials which will approximate the separatrices accurately in a larger domain of validity than their linear approximations. The algorithm used here is a parameterization method \cite{haroetal} largely adapted from those of \citep{kumar2022} for stable/unstable manifolds of invariant tori and \cite{kumar2021journal} for period maps of periodic orbits; it also bears strong similarities to the recent method of \cite{kumar2025jnls} which was also extended from those works. Since we have $q$ SPO points $X(k)$, each of which has one attached ``weak stable'' $\bold{v}_{1}(k)$ and ``weak unstable'' $\bold{v}_{2}(k)$ direction at that point, there will be $q$ 1D curves to compute for each of the stable and unstable separatrices. In the framework of Section \ref{paramsectiongeneral}, we have $\mathcal M = \{0, 1, \dots, q-1\} \times \mathbb{R}$ and $f(k, s) = (k+1 \mod q, \lambda s)$, where $\lambda$ is the weakly stable $\lambda_{1}$ or unstable $\lambda_{2}$ entry of $\Lambda$, depending on which separatrix we are trying to compute. With this, the equation to solve for the parameterizations $W: \{0, 1, \dots, q-1\} \times \mathbb{R} \rightarrow \mathbb{R}^{4}$ of the stable or unstable separatrix is 

\begin{equation}  \label{invariancequationfinal}   F(W(k, s)) - W(k + 1 \mod q, \lambda s) =0, \quad\quad\quad (k, s) \in \{0, 1, \dots, q-1\} \times \mathbb{R} \end{equation}

\subsection{Order-by-Order Method to Solve for $W$} \label{paramsection}

We express $W$ as Taylor series that depends on $k = 0, 1, \dots, q-1$, with form
\begin{equation}  \label{series} W(k, s) = X(k) + \sum_{d \geq 1} W_{d}(k)s^{k}  \end{equation}
Here, $s=0$ corresponds to the points $X(k)$ of the SPO whose separatrix we are trying to compute. The $s^{0}$ term of each $W$ is $X(k)$, and each linear term $W_{1}(k)$ is the stable $\bold{v}_{1}(k)$ or unstable $\bold{v}_{2}(k)$ Floquet direction known from the first or second column of $P(k)$. Hence we need to solve for the higher-order coefficients $W_{d}(k) \in \mathbb{R}^{4}$, $k = 0, 1, \dots, q-1$ for $d \geq 2$. 

Denote $W_{<d} (k, s) = X(k) + \sum_{j = 1}^{d-1} W_{j}(k)s^{j}  $. Assume we have solved for all $W_{j}(k)$ for $j <d$, so that the Taylor expansions of $F(W_{<d}(k, s)) - W_{<d}( k+1 \mod q, \lambda s)$ have only $s^{d}$ and higher order terms for all $k=0, 1, \dots, q-1$. Then, starting with $d=2$, the recursive method to solve for the $W_{d}(k)$ is:
    \begin{enumerate}
        	\item Find $E_{d}(k)= [F(W_{<d}(k, s)) - W_{<d}(k+1 \mod q,  \lambda s)]_{d}$, where $[\cdot]_{d}$ denotes the $s^{d}$ Taylor coefficient of the term inside brackets. We describe methods for this in Section \ref{jettransport}. 
	\item Find $W_{d} (k)$, $k=0, 1, \dots, q-1$ such that $W_{<d} (k,s)+W_{d} (k)s^{d}$ cancels the error $E_d (k)s^{d}$ in Eq. \eqref{invariancequationfinal}, thus satisfying Eq. \eqref{invariancequationfinal} up to order $s^{d}$. The equation to solve for $W_{d}(k)$ is 
	\begin{equation}  \label{correctionwk} DF(X(k)) W_{d}(k) - \lambda^{d} W_{d}(k+1 \mod q) = -E_{d}(k) \end{equation}
	Substituting $W_{d}(k) = P(k) V_d(k)$, defining $\eta_d(k) = -P(k+1 \mod q)^{-1}E_{d}(k) $, and recalling that $X$, $P$, $\Lambda$ solve Eq. \eqref{floquetEquation}, we have Eq. \eqref{correctionwk} is equivalent to $\Lambda V_{d}(k) - \lambda^{d} V_{d}(k+1 \mod q) = \eta(k)$. Equivalently,
	\begin{equation}  \label{v1}  \lambda_1 V_{d,1}(k) - \lambda^{d} V_{d,1}(k+1 \mod q) = \eta_{d,1}(k)  \end{equation} 
		\begin{equation}  \label{v2}  \lambda_2 V_{d,2}(k) - \lambda^{d} V_{d,2}(k+1 \mod q) = \eta_{d,2}(k)  \end{equation} 
	\begin{equation}  \label{v3}  \lambda_s V_{d,3}(k) - \lambda^{d} V_{d,3}(k+1 \mod q) = \eta_{d,3}(k)   \end{equation} 
	\begin{equation}  \label{v4}  \lambda_u V_{d,4}(k) - \lambda^{d} V_{d,4}(k+1 \mod q) = \eta_{d,4}(k)   \end{equation} 
where $V_d(k) = [V_{d,1}(k), V_{d,2}(k), V_{d,3}(k), V_{d,4}(k)]^T , \eta_d(k) = [\eta_{d,1}(k), \eta_{d,2}(k), \eta_{d,3}(k), \eta_{d,4}(k)]^T \in \mathbb{R}^4$. Eqs. \eqref{v1}-\eqref{v4} can be solved by the method of Section \ref{fixedPointIter}, since $ |\lambda_1 \lambda^{-d}| \neq 1$, $| \lambda_2 \lambda^{-d}| \neq 1$, $| \lambda_s \lambda^{-d}| \neq 1$, and $| \lambda_u \lambda^{-d}| \neq 1$ for all $d \geq 2$. This gives $V_d(k)$ and thus $W_{d}(k) = P(k) V_d(k)$. 

	\item Set $W_{<d+1} (k,s) = W_{<d} (k, s) + W_d(k) s^{d}$ and return to step 1.
    \end{enumerate}
    The recursion is stopped when we are satisfied with the degree $d$ of $W$. Note that the Floquet matrix $\Lambda$ allowed us to  decouple the equations in Step 2, simplifying the solution of Eq. \eqref{correctionwk}. We now prove that Eq. \eqref{correctionwk} indeed yields $W_{d}(k)$ cancelling the order $s^d$ error.

\begin{claim*} If $W_{d}$ solves Eq. \eqref{correctionwk}, then for $j \leq d$ (using the $[\cdot]_{d}$ notation defined earlier),
\begin{equation}  \label{jcoeff} \left[  F(W_{<d}(k, s)+W_{d}(k)s^{d}) - \left(W_{<d}(k+1 \mod q, \lambda s)+W_{d}(k+1 \mod q)( \lambda s)^{d}\right) \right]_{j}= 0 \end{equation} 
\end{claim*}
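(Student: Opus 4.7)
The plan is to split the claim into two cases by order: $j < d$ and $j = d$. The unifying observation is that $W_{<d}(k,s)$ and $W_{<d}(k,s) + W_d(k) s^d$ agree through order $s^{d-1}$, so the perturbation $W_d(k)s^d$ can only affect Taylor coefficients at orders $s^d$ and higher after composition with $F$. Likewise on the right-hand side, adding $W_d(k+1 \mod q)(\lambda s)^d$ contributes only at order $s^d$.

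For $j < d$, this observation immediately implies that the $s^j$-coefficient of the bracketed expression equals the $s^j$-coefficient of $F(W_{<d}(k, s)) - W_{<d}(k+1 \mod q, \lambda s)$. The latter vanishes by the standing hypothesis from Step 1 of the recursive scheme, which states that $F(W_{<d}(k, s)) - W_{<d}(k+1 \mod q, \lambda s)$ contains only $s^d$ and higher-order terms (this hypothesis is established at $d=2$ by the invariance equations \eqref{invariance} and \eqref{floquetEquation} applied to the $s^0$ and $s^1$ coefficients, and is maintained thereafter by the very claim being proved).

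For $j = d$, I would Taylor expand $F$ about $W_{<d}(k,s)$ in its first argument:
\begin{equation*}
F(W_{<d}(k,s) + W_d(k) s^d) = F(W_{<d}(k,s)) + DF(W_{<d}(k,s)) W_d(k) s^d + \mathcal{O}(s^{2d}).
\end{equation*}
Since $d \geq 2$, the $\mathcal{O}(s^{2d})$ remainder cannot contribute at order $s^d$. Moreover, since $W_{<d}(k,0) = X(k)$ and the middle term already carries a factor of $s^d$, any $s$-dependence in $DF(W_{<d}(k,s))$ only contributes at orders strictly greater than $d$. Therefore, the $s^d$-coefficient of the left summand is exactly $E_d(k) + DF(X(k)) W_d(k)$. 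On the right-hand side, $W_{<d}(k+1 \mod q, \lambda s)$ has degree at most $d-1$ in $s$ and so contributes nothing at order $s^d$, while $W_d(k+1 \mod q)(\lambda s)^d$ contributes exactly $\lambda^d W_d(k+1 \mod q)$. Combining these and invoking Eq.~\eqref{correctionwk} yields
\begin{equation*}
E_d(k) + DF(X(k)) W_d(k) - \lambda^d W_d(k+1 \mod q) = E_d(k) + (-E_d(k)) = 0,
\end{equation*}
as desired.

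There is no real obstacle here; the argument amounts to checking that Eq.~\eqref{correctionwk} is precisely the linearized invariance equation at order $s^d$, with Taylor bookkeeping ensuring no other contributions intervene. The only point requiring care is the verification that $s$-dependent corrections to $DF(W_{<d}(k,s))$, once multiplied by $W_d(k) s^d$, cannot influence the $s^d$ coefficient — which follows immediately from their order being $\geq s^{d+1}$.
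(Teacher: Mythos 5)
Your proposal is correct and follows essentially the same route as the paper's proof: expand $F$ about $W_{<d}(k,s)$, note that the $W_d(k)s^d$ perturbation and the $s$-dependence of $DF(W_{<d}(k,s))$ contribute nothing below order $s^{d}$ (and beyond $DF(X(k))W_d(k)$ at order $s^d$), use the standing hypothesis for $j<d$, and invoke Eq.~\eqref{correctionwk} at $j=d$. The only difference is presentational — you separate the two cases and the two sides more explicitly than the paper's single compressed computation.
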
 
\begin{proof} Recall that $ F(W_{<d}(k, s)) - W_{<d}(k+1 \mod q,  \lambda s)=E_{d}(k)s^{d} + \mathcal O(s^{d+1}) $ by assumption. Expanding Eq. \eqref{jcoeff} in Taylor series and keeping only $s^{d}$ and lower order terms gives
\begin{align}  \begin{split}
\Big[F(W_{<d}&(k, s)) + \Big. DF(W_{<d}(k, s)) W_{d}(k)s^{d} - \\
& \quad \quad \quad \left. \left(W_{<d}(k+1 \mod q, \lambda s)+W_{d}(k+1 \mod q)( \lambda s)^{d}\right) \right]_{j} \\
=&[E_{d}(k)s^{d} + DF(W_{<d}(k,s)) W_{d}(k)s^{d} - \lambda^{d} W_{d}(k+1 \mod q)s^{d} ]_{j} \\
=&\begin{cases} 
      0 &\text{if $j<d$}, \\
      E_{d}(k) + DF(X(k)) W_{d}(k) - \lambda^{d} W_{d}(k+1 \mod q) = 0 &\text{if $j=d$}
   \end{cases} \\
\end{split} \end{align}
where the $j=d$ case of the last line follows from the preceding line by dividing $s^{d}$ out from the quantity inside $[.]_{j}$, and then taking $s \rightarrow 0$. 
\end{proof}

\subsubsection{Computing $E_{d}(k)$: automatic differentiation and jet transport} \label{jettransport}

In step 1 of the order-by-order parameterization method to find $W$, we computed the $s^{d}$ coefficients
\begin{equation} \label{Ekdef} E_{d}(k)= [F(W_{<d}(k, s)) - W_{<d}(k+1 \mod q,  \lambda s)]_{d}\end{equation}
Each $W_{<d}(k, s)$ is a degree $d-1$ polynomial and $\lambda$ is a constant, so the $s^{d}$ term of $W_{<d}(k+1 \mod q,  \lambda s)$ is just 0. However, computing the Taylor expansion of $F(W_{<d}(k, s)) $ can be more complicated if $F$ is a highly nonlinear map, and even more so if  $F$ is a stroboscopic map defined by integrating points for a fixed time $T_p$ by some Hamiltonian flow  Eqs. \eqref{H_EOM}-\eqref{perturbed_H}. For such cases, we use the tools of automatic differentiation \cite{haroetal} and/or jet transport \cite{perezpalau2015}, which are also sometimes referred to as differential algebra in the literature \citep{dast,Berz1998}. For the sake of self-containedness, the following discussion of these two methods is largely identically reproduced from the author's previous papers \cite{kumar2022, kumar2025jnls}.

Automatic differentiation is an efficient and recursive technique for evaluating operations on Taylor series. For instance, let $f(s)$ and $g(s)$, $s \in \mathbb{R}$, be two series; we can use their known coefficients to compute $d(s) = f(s) / g(s)$ as a Taylor series as well. Let subscript $j$ denote the $s^{j}$ coefficient of a series; since $ f(s) = d(s) g(s)$, we find that $f_{i} = \sum_{j=0}^{i} d_{j} g_{i-j} =  \left( \sum_{j=0}^{i-1} d_{j} g_{i-j}(s) \right)+ d_{i} g_{0} $, which implies that
\begin{equation}  
  \label{autodiff}  d_{i} = \frac{1}{g_{0}} \left( f_{i} - \sum_{j=0}^{i-1} d_{j} g_{i-j} \right)\end{equation}  
Starting with $d_{0} = f_{0}/ g_{0}$, Eq. \eqref{autodiff} allows us to recursively compute $d_{i}$, $i \geq 1$. Similar formulas also exist for recursively evaluating many other functions and operations on Taylor series, including $f(s)^{\alpha}$, $\alpha \in \mathbb{R}$; see \cite{haroetal} for more examples. Most importantly, in all automatic differentiation formulas, the output series $s^{i}$ coefficient depends only on the $s^{i}$ and lower order coefficients of the input series. Hence, truncation of Taylor series for the purpose of computer implementation does not affect the accuracy of the computed coefficients.

In the case that $F$ is a map defined through perhaps complicated but explicit mathematical formulas, automatic differentiation can be applied to evaluate $F(W_{<d}(k, s))$ directly and find the desired degree-$d$ Taylor coefficient. If $F$ is the stroboscopic map of a flow, though, this direct evaluation no longer applies; more steps are required. Nevertheless, automatic differentiation still remains useful for flow maps. Its utility in this case is that we can substitute Taylor series such as $W_{<d}(k, s)$ for $(x, y, p_x, p_y)$ in the equations of motion \eqref{H_EOM}, which gives us series in $s$ for $(\dot x, \dot y,\dot p_x, \dot p_y)$. Thus, after overloading the required operators (e.g. arithmetic and power) to accept Taylor series arguments, we can use numerical integration routines with the series as well.\footnote{In fact, any method of overloading the basic operations to accept polynomial arguments could be used in combination with numerical integration here, including methods other than automatic differentiation. As the implementation of this paper used automatic differentiation, this will be the focus of discussion here.} 
More precisely, consider a Taylor series-valued function of time $V(s,t) = \sum_{j=0}^{\infty}V_{j}(t)s^{j}:\mathbb{R}^{2} \rightarrow \mathbb{R}^{4}$, where $V_{j}(t)$ are its time-varying Taylor coefficients. Write $V_{x}(s,t)$, $V_{y}(s,t)$, ${V}_{p_{x}}(s,t)$, and $V_{p_y}(s,t)$ for the $x$, $y$, $p_x$, and $p_y$ components of $V(s,t)$; similarly write $V_{j,x}(t)$, $V_{j,y}(t)$, ${V}_{j,p_{x}}(t)$, and $V_{j,p_y}(t)$ for the components of $V_{j}(t)$. Substituting $V$ in Eq. \eqref{H_EOM} yields a system of differential equations 
\begin{equation} \label{vxdot}  \frac{d}{dt}{V_{x}(s,t)} = \sum_{j=0}^{\infty}\dot V_{j,x}(t)s^{j} = \frac{\partial H_{\varepsilon}}{\partial p_{x}}\Big(V_{x}(s,t),V_{y}(s,t), V_{p_x}(s,t),V_{p_y}(s,t),\theta_{p}\Big)  \end{equation}
\begin{equation}  \label{vydot} \frac{d}{dt}{V_{y}(s,t)} = \sum_{j=0}^{\infty}\dot V_{j,y}(t)s^{j} = \frac{\partial H_{\varepsilon}}{\partial p_{y}}\Big(V_{x}(s,t),V_{y}(s,t), V_{p_x}(s,t),V_{p_y}(s,t),\theta_{p}\Big)  \end{equation}
\begin{equation}  \label{vpxdot} \frac{d}{dt}{V_{p_x}(s,t)} = \sum_{j=0}^{\infty}\dot V_{j,p_x}(t)s^{j} = -\frac{\partial H_{\varepsilon}}{\partial x}\Big(V_{x}(s,t),V_{y}(s,t), V_{p_x}(s,t),V_{p_y}(s,t),\theta_{p}\Big)  \end{equation}
\begin{equation} \label{vpydot} \frac{d}{dt}{V_{p_y}(s,t)} = \sum_{j=0}^{\infty}\dot V_{j,p_y}(t)s^{j} = -\frac{\partial H_{\varepsilon}}{\partial y}\Big(V_{x}(s,t),V_{y}(s,t), V_{p_x}(s,t),V_{p_y}(s,t),\theta_{p}\Big)  \end{equation}
\begin{equation} \label{thetapdot} \dot \theta_p = \Omega_p \end{equation}

Assume that $H_{\varepsilon}$ and its partials are algebraic functions that are suitable for use with automatic differentiation techniques; see, for instance, the CCR4BP Hamiltonian Eq. \eqref{ccr4bpH}. Hence, if the $V_{j,x}(t)$, $V_{j,y}(t)$, ${V}_{j,p_{x}}(t)$, $V_{j,p_y}(t)$, and $\theta_{p}$ are known for $j \in \mathbb{N}$ and some $t \in \mathbb{R}$, automatic differentiation allows us to simplify the RHS of each of Eq. \eqref{vxdot}-\eqref{vpydot} to a series in $s$. Then, for each of Eq. \eqref{vxdot}-\eqref{vpydot} and $j \in \mathbb{N}$, the $s^{j}$ coefficient $\dot V_{j,x}(t)$, $\dot V_{j,y}(t)$, ${\dot V}_{j,p_{x}}(t)$, or $\dot V_{j,p_y}(t)$ from the LHS must be equal to the $s^{j}$ coefficient of the RHS. In other words, $\dot V_{j,x}(t)$, $\dot V_{j,y}(t)$, ${\dot V}_{j,p_{x}}(t)$, and $\dot V_{j,p_y}(t)$, $j \in \mathbb{N}$, are functions of $\theta_{p}$, $ V_{j,x}(t)$, $ V_{j,y}(t)$, ${ V}_{j,p_{x}}(t)$, and $ V_{j,p_y}(t)$, $j \in \mathbb{N}$. This is effectively a system of differential equations for the time-varying Taylor coefficients of $V(s,t)$. Solving Eq. \eqref{vxdot}-\eqref{thetapdot}   for the various initial conditions $V(s,0) = W_{<d}(k, s)$, $k=0, 1, \dots, q-1$, and initial $\theta_{p}$ equal to the value $\theta_0$ fixed in Section \ref{stroboscopic}, we can compute $F(W_{<d}(k, s))=V(s,T_p)$ for all desired $k$, which are precisely the Taylor series we needed. 

In summary, if $F$ is a map given by explicit formulas, automatic differentiation can be used to directly find the $s^{d}$ Taylor coefficients $E_{d}(k)$ of $F(W_{<d}(k, s))$ for all $k = 0, 1, \dots, q-1$. On the other hand, if $F$ is defined as the stroboscopic map of a flow, we consider the Taylor coefficients of $W_{<d}(k, s)$ as initial state variables to be numerically integrated coefficient by coefficient; propagating by time $T_p$, we get the Taylor coefficients of $F(W_{<d}(k, s))$, and the $s^{d}$ coefficient of this gives us $E_{d}(k)$. This approach for numerical integration of Taylor series is called \emph{jet transport}; see \cite{perezpalau2015} for more details. Truncated Taylor series can be used with jet transport, since the automatic differentiation techniques used to evaluate time derivatives work with truncated series without loss of accuracy; in fact, during the $E_{d}(k)$ calculation, one \emph{should} truncate all series to order $s^d$ for the automatic differentiation and jet transport steps, to optimize computational time and storage. Note that for degree-$d$ truncated series and our $4$-dimensional phase space, there are $4(d + 1)$ coefficients, which is the required dimension for the numerical integration. 

\begin{remark}
If $W(k, s)$, $k = 0, 1, \dots, q-1$ is a solution of Equation \eqref{invariancequationfinal}, then so is $W(k, \alpha s)$ for any $\alpha \in \mathbb{R}$. Sometimes, the jet transport integration may struggle to converge due to fast-growing coefficients $W_{j}(k)$ of $W(k, s)$; conversely, fast-shrinking $W_{j}(k)$ can lead to numerical errors in computing $W(k,s)$. In either case, scaling $W(k, s)$ to some $W(k, \alpha s)$ can help. To do this, simply multiply $W_{1}(k)$ by $\alpha$ and then restart the order-by-order algorithm of Section \ref{paramsection}; $\alpha$ should be chosen so that the $W_{j}(k)$ neither grow too rapidly nor shrink to zero. Such an $\alpha$ can be found by running a preliminary calculation of $W(k, s)$, and fitting an exponential growth rate to the resulting coefficients;  alternatively, simple trial and error also often works. 
\end{remark}

\subsection{Fundamental Domains of Validity for Separatrix Parameterizations $W(k,s)$} \label{funDomainSection}

The $d$ degree Taylor parameterizations $W_{\leq d}(k,s)$ of the stable/unstable separatrices of $X(k)$ under the map $F$ will be much more accurate than their linear approximations by $\bold{ v}_{1}(k)$ or $\bold{ v}_{2}(k)$. Nevertheless, they will still be inexact due to series truncation error; furthermore, even the exact infinite series $W(k,s)$ satisfying Eq. \eqref{invariancequationfinal} would only be valid for $s$ within some radius of convergence. Hence, one must determine the values of $s \in \mathbb{R}$ for which $W_{\leq d}(k,s)$ accurately represents curves on the stable/unstable separatrix. 

Fix an error tolerance, say $E_{tol} = 10^{-5}$ or $10^{-6}$. We now find what \cite{haroetal} calls the fundamental domain of $W_{\leq d}(k, s)$: the largest set $\mathcal D = \{0, 1, \dots, m-1\} \times (-D,D)$ such that for all $(k,s) \in \mathcal{D}$, the error in invariance Eq. \eqref{invariancequationfinal} is less than $E_{tol}$. In other words, we seek the largest $D \in \mathbb{R}^{+}$ such that for all $s$ with $|s| < D$, 
\begin{equation} \max_{k=0, 1, \dots, q-1} \left\|F(W_{\leq d}(k, s)) - W_{\leq d}(k+1 \mod q, \lambda s)\right\| < E_{tol} \end{equation}
The simplest way to find $D$ is to fix $k$ to some value, and then use bisection to find the largest $D_{k}$ such that $\left\|F(W_{\leq d}(k, s)) - W_{\leq d}(k+1 \mod q, \lambda s)\right\| < E_{tol} $ for all $s \in (-D_{k},D_{k})$; starting the bisection with endpoints $s=0$ and $s=10$ generally works well. After doing this for each value of $k=0,1, \dots, q-1$, the aforementioned $D$ will be the minimum of all the $D_{k}$. 

Finally, since $F$ should not be repeatedly applied to separatrix points for globalization here, one should represent the given separatrix curve by simply evaluating each $W_{\leq d}(k,s)$ for a dense grid of values of $s \in [-D, D]$; in fact, since globalization is not being carried out, one can evaluate each $W_{\leq d}(k,s)$ for a dense grid of values of $s \in [-D_{k}, D_{k}]$ instead, which will give longer curves than using $D = \min_{k=0,1,\dots,q-1} D_{k}$. Applying this entire procedure, starting with the steps of Section \ref{paramsection}, for both $\lambda=\lambda_1$ and $\lambda_2$, the resulting parameterizations and evaluated points hence conclude the computation of the desired separatrices. 

\section{Numerical Implementation and Results in the CCR4BP} \label{numResults}

The methodology developed in Sections \ref{spoSection} and \ref{parambigsection} is general, and can be applied to any family of symplectic maps $F_{\varepsilon}: \mathbb{R}^{4} \rightarrow \mathbb{R}^{4}$ satisfying the assumptions of Section \ref{settingSection}. The quasi-Newton method and separatrix parameterization method of this paper have successfully been implemented and used in studies of subharmonic periodic orbits and their induced dynamics in the Jupiter-Europa-Ganymede \cite{kumar2023aas} and Uranus-Titania-Oberon \cite{kumar2024aas} planar CCR4BP models, for which stroboscopic maps were studied (recall Section \ref{stroboscopic}). After a brief discussion of celestial mechanics-specific terminology, and some details on the methods' computational implementations, this section will present examples of SPOs and separatrices from those studies---thus illustrating the effectiveness of this paper's methods for applied numerical and dynamical studies. 

\subsection{Celestial Terminology: Mean Motion Resonances and Secondary Resonances} 

Recall that the planar CCR4BP, described in Section \ref{r4bpSection}, is a 2.5 DOF periodic Hamiltonian perturbation of the 2 DOF PCR3BP (Section \ref{pcr3bpSection}). In this celestial mechanics context, the original NHIMs of unstable periodic orbits from the unperturbed PCR3BP often occur inside phase space regions called \emph{mean motion resonances} (MMRs); any persisting invariant tori of $F_{\varepsilon}$ corresponding to these $F_0$-orbits are thus also generally called ``resonant tori'', even if their rotation number $\omega$ under the CCR4BP stroboscopic map is not in fact a rational multiple of $2\pi$. Hence, to distinguish the more general ``resonant tori'' of CCR4BP MMRs from the ``resonant tori'' defined in Section \ref{settingSection} (those with rational $\frac{\omega}{2\pi}$), we will henceforth refer to the latter as \emph{secondary resonant} tori in this celestial context. As expected, secondary resonant tori from the PCR3BP generically do not persist into the CCR4BP for any $\varepsilon > 0$, but a finite number of their constituent subharmonic periodic orbits do; in \cite{kumar2023aas} and \cite{kumar2024aas}, these SPOs are referred to as \emph{secondary resonant periodic orbits}. We will use this term and SPO interchangeably in Section \ref{jupUrSection}. 

While we refer the reader to \cite{kumar2022} for full details, to give a more intuitive physical interpretation of the above discussion, the MMR orbits of the PCR3BP are in resonance with the revolution of the large mass $m_{2}$ included in that model. Secondary resonant orbits inside this MMR are then \emph{also} in resonance with the revolution of the CCR4BP perturbing mass $m_{3}$. Thus, while secondary resonant orbits remain in resonance with $m_{2}$, they are also in resonance with $m_{3}$, and can be thought of as being contained in a ``resonance inside a resonance''---whose boundaries inside the MMR's 2D NHIM are formed by separatrices.

\subsection{Notes About Computational Implementation} 

The studies that will be discussed in Section \ref{jupUrSection} were carried out on a 2021-era Mac laptop with an Apple M1 Pro CPU. All algorithms were implemented in the Julia programming language. The calculation of CCR4BP SPOs through numerical continuation, starting from the PCR3BP and leveraging the quasi-Newton method of Section \ref{spoSection}, generally took less than 2 seconds per SPO. The algorithm implementation did not require the use of any special packages beyond OrdinaryDiffEq.jl \cite{RackauckasQing} for ODE propagation. Numerical integration for stroboscopic map evaluation used OrdinaryDiffEq.jl's built-in DP8 (order 8/5/3 Dormand-Prince Runge-Kutta) adaptive step size integration algorithm. 

The separatrix computations also were carried out in Julia; we went up to truncation order $d=20$ in our series computations. The parameterization method, automatic differentiation, and jet transport described in Section \ref{parambigsection} were implemented leveraging the TaylorSeries.jl \cite{Benet2019}, TaylorIntegration.jl \cite{perezTaylorIntegration}, and OrdinaryDiffEq.jl \cite{RackauckasQing} packages for automatic differentiation and jet transport. The TaylorSeries.jl package already defines a truncated Taylor1 variable type, with built in automatic differentiation routines to operate on them. The OrdinaryDiffEq.jl library, though not originally developed for jet transport, can handle Taylor1 variables as initial conditions when loaded alongside the TaylorIntegration.jl package, propagating them exactly as described in Section \ref{jettransport} on jet transport. Note that the DP8 integrator was used here as well, rather than the Taylor integrator of TaylorIntegration.jl. 
%Printing $E_{d}(k,s)$ to the terminal, we see that the coefficients of order less than $d$ are zero as expected in each step. 

\subsection{Secondary Resonant Orbits and Separatrices in CCR4BP Models of Real Systems}  \label{jupUrSection}

In the paper \cite{kumar2023aas}, we successfully applied the methods of Sections \ref{spoSection}-\ref{parambigsection} to the computation of secondary resonant periodic orbits, their Floquet multipliers \& directions, and finally their separatrices for the stroboscopic map of the Jupiter-Europa-Ganymede CCR4BP. Europa and Ganymede are both moons of Jupiter which revolve in near-circular orbits around Jupiter; in \cite{kumar2023aas}, we focused on studying the perturbative effect of Jupiter's moon Europa on a family of Jupiter-Ganymede PCR3BP ($\mu = 7.8037 \times 10^{-5}$) unstable periodic orbits contained in Ganymede's 4:3 MMR. These PCR3BP orbits are shown on the left of Figure \ref{fig:43pcr3bp}, with Europa's orbit also displayed for reference (but without its gravity taken into account yet). All computations were carried out in the Jupiter-Ganymede co-rotating reference frame described in Sections \ref{pcr3bpSection} and \ref{r4bpSection}, which is also used for visualization in many of the following figures. Europa's mass ratio $\varepsilon$, as defined in Section \ref{r4bpSection}, is considered the perturbing parameter (also denoted as $\mu_{3}$ in some of the figures). 

\begin{figure}
\includegraphics[width=0.499\columnwidth]{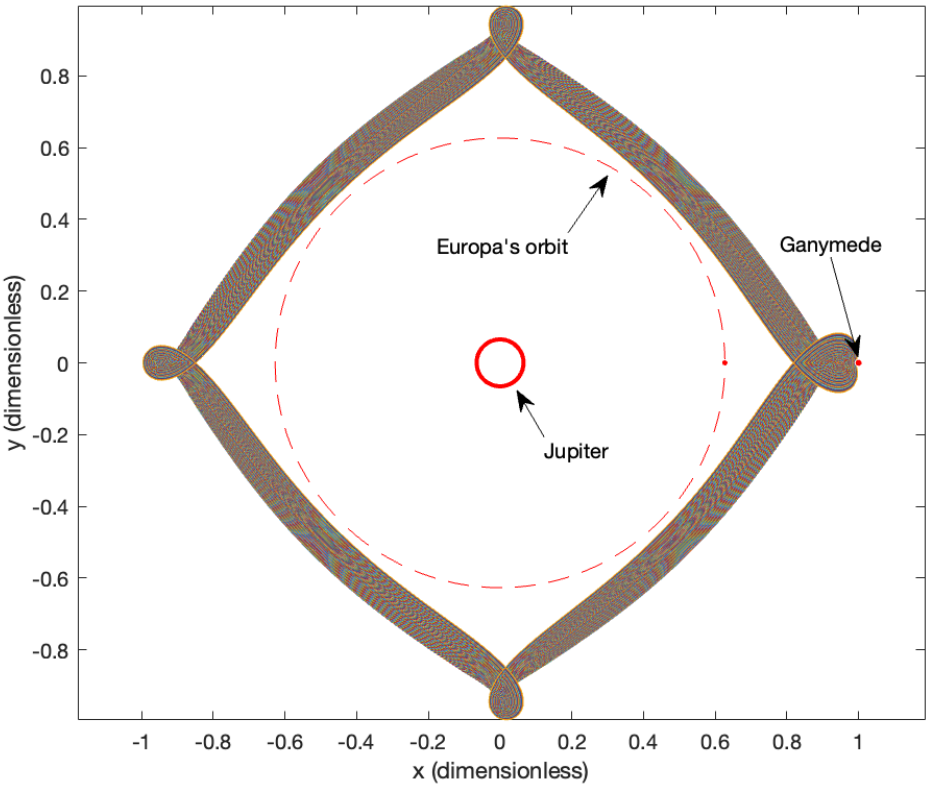}
\includegraphics[width=0.499\columnwidth]{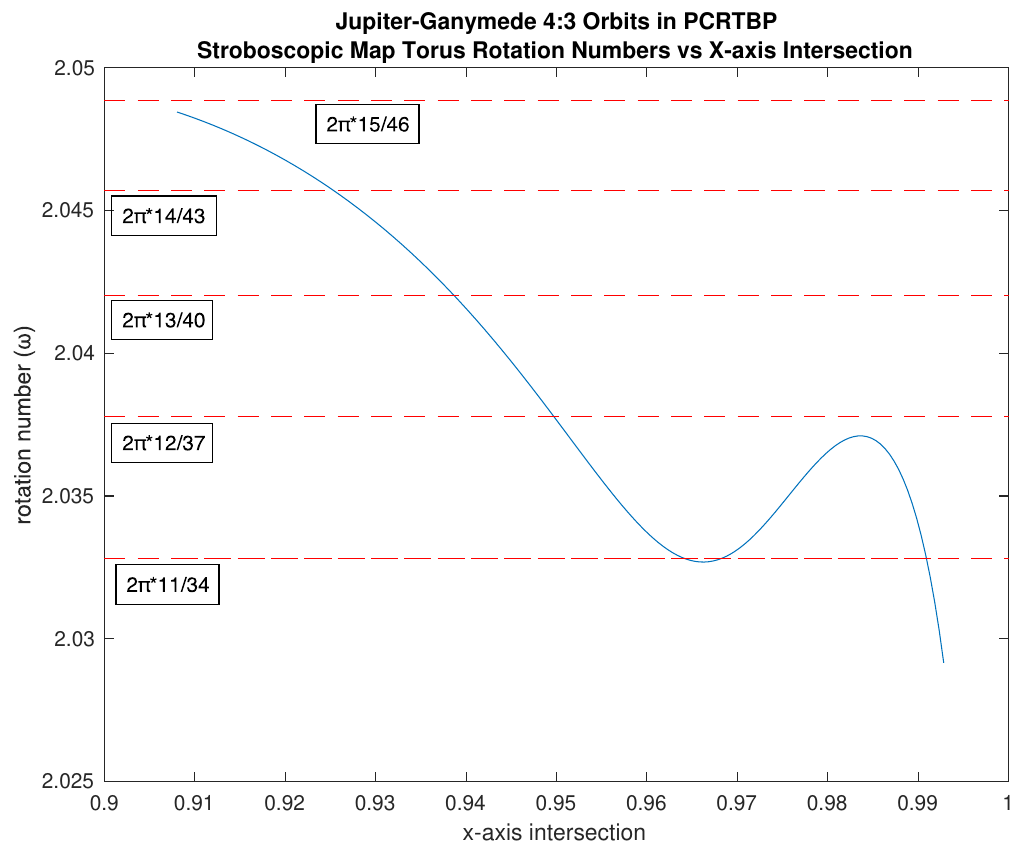}
\vspace{-16pt}
\caption{\label{fig:43pcr3bp} (L) Jupiter-Ganymede PCRTBP 4:3 MMR unstable periodic orbits, (R) Orbit $\omega$ vs $x$-intercept plot \cite{kumar2023aas}} 
\vspace{-16pt}
\end{figure}

In the unperturbed Jupiter-Ganymede PCR3BP with $\varepsilon = 0$, this 4:3 MMR unstable flow-periodic orbit family has a range of periods $T$ which can be converted to stroboscopic map torus rotation numbers using the relation $\omega = 2\pi T_p / T$ of Section \ref{stroboscopic}; in normalized units, $T_p \approx 6.1966$ for the Europa perturbation period (and hence stroboscopic mapping time). On the right of Fig. \ref{fig:43pcr3bp}, one can see the plot of $\omega$ versus the PCR3BP orbit's rightmost $x$-intercept. 
While we were able to numerically continue many tori with $\omega > 2.04047$ from the Jupiter-Ganymede PCR3BP into the physical-mass (planar) Jupiter-Europa-Ganymede CCR4BP---i.e., from $\varepsilon = 0$ to $\varepsilon = 2.5265 \times 10^{-5}$ for Europa---this was not the case for lower $\omega$. Thus, to investigate the dynamics in this portion of the 4:3 Ganymede MMR unstable orbit family, we sought to instead compute secondary resonant periodic orbits (SPOs) and their separatrices in the physical-mass CCR4BP.

Using the CCR4BP's symmetry to identify the initial phases of unperturbed PCR3BP SPOs likely to persist into the CCR4BP, in \cite{kumar2023aas} we used the quasi-Newton method of Section \ref{spoSection} to numerically continue secondary resonant SPOs and their Floquet directions for $\frac{\omega}{2\pi}$ ratios of 11/34, 34/105, 23/71, 35/108, 12/37, and 25/77. Since writing \cite{kumar2023aas}, we also computed SPOs at ratios 37/114 and 45/139, which are also included in some of the following figures. All of these ratios correspond to $\omega < 2.04047$. We used a tolerance of $10^{-7}$ in Eqs. \eqref{invariance}-\eqref{bundleEquations} and continuation step sizes $\Delta \varepsilon$ of $5 \times 10^{-7}$ to $10^{-6}$ for the computation of these SPOs, all of which were successfully continued under the CCR4BP stroboscopic map until the desired $\varepsilon$ value---in contrast with the tori in this range of $\omega < 2.04047$, none of which survived to $\varepsilon = 2.5265 \times 10^{-5}$. Some of these SPOs experienced stability changes in the $\lambda_1$-$\lambda_2$ Floquet multiplier pair as $\varepsilon$ changed; for instance, the 23/71 SPO which had hyperbolic $\lambda_1$-$\lambda_2$ at $\varepsilon = 3 \times 10^{-6}$ changed to elliptic  $\lambda_1$-$\lambda_2$ at $\varepsilon = 4 \times 10^{-6}$, and vice versa. The quasi-Newton method handled such  $\lambda_1$-$\lambda_2$ transitions without loss of accuracy. 

Figure \ref{fig:srPOs} shows all orbits (both tori and SPOs) found for $\omega < 2.04047$, with $\varepsilon = 0$ shown on top, $\varepsilon =  8.0 \times 10^{-6}$ ($\approx 31.7$ percent of Europa's actual mass ratio)  in the middle, and the real Europa mass ratio $\varepsilon = 2.5265 \times 10^{-5}$ on bottom. 
\begin{figure}[t]
\begin{centering}
\includegraphics[width=0.93\columnwidth]{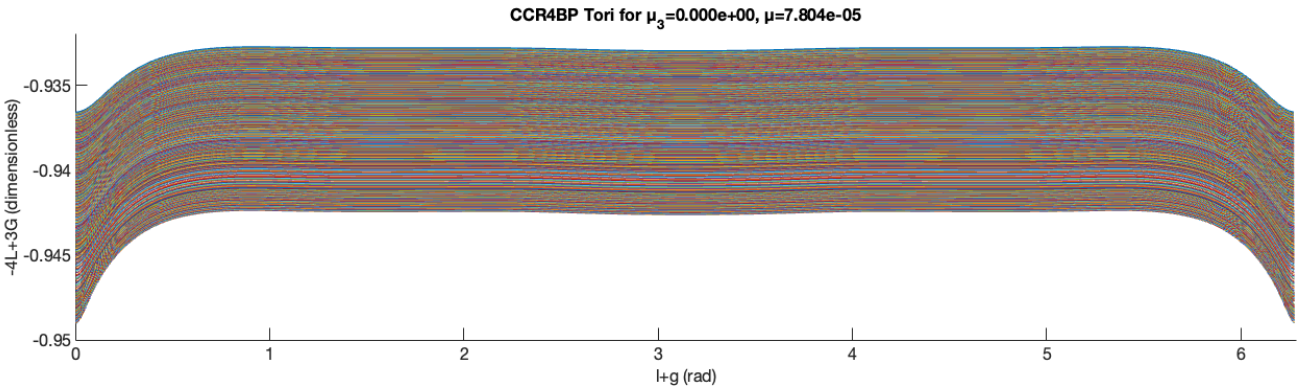}
\includegraphics[width=0.93\columnwidth]{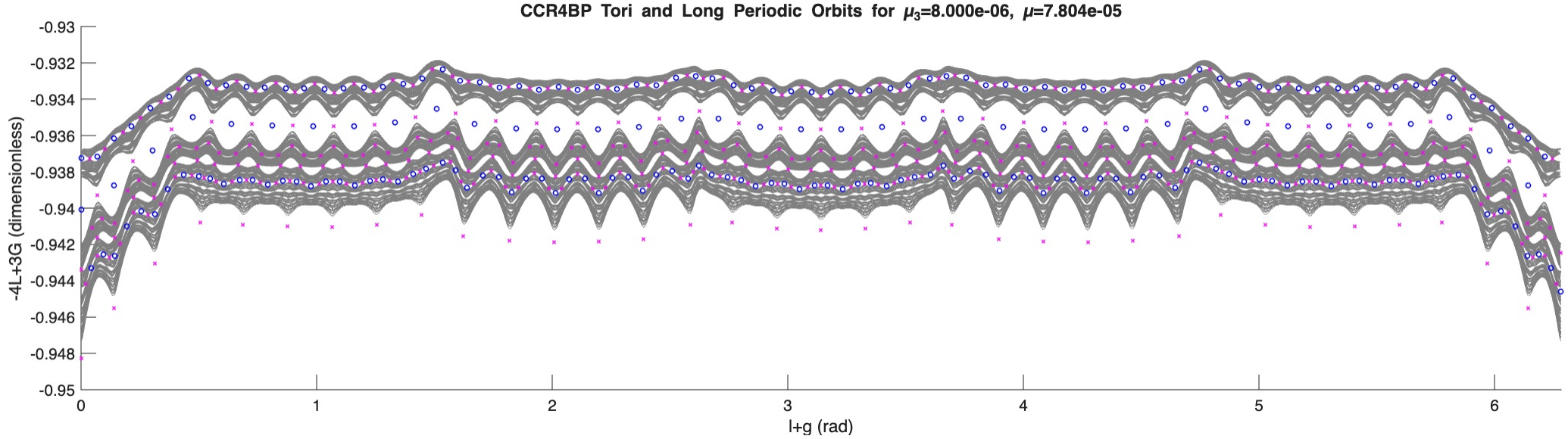}
\includegraphics[width=0.93\columnwidth]{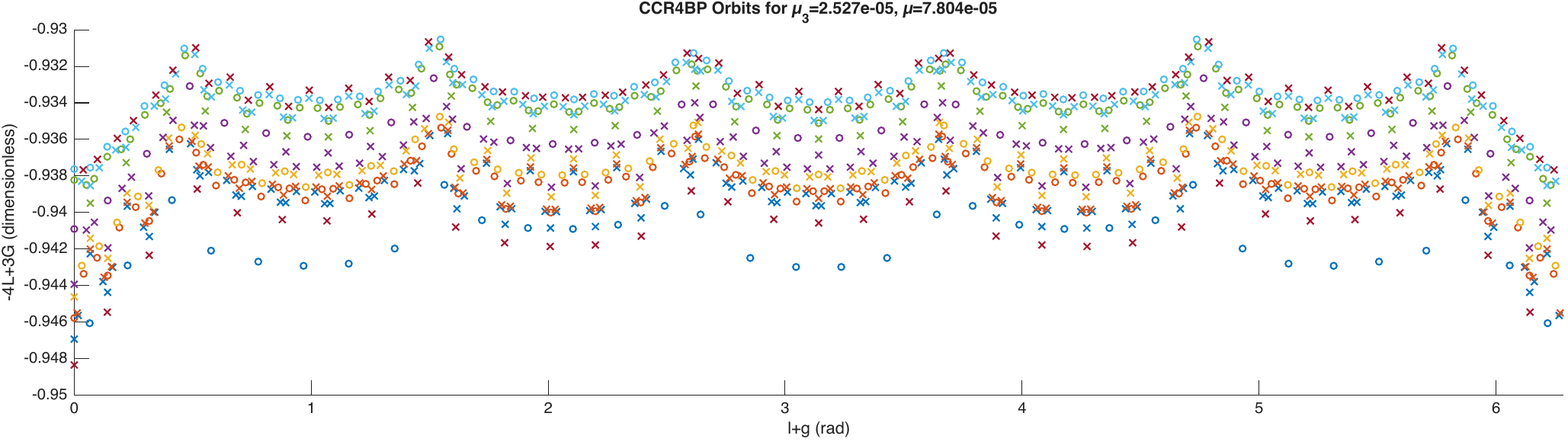}
\vspace{-6pt}
\caption{\label{fig:srPOs}  Ganymede 4:3 MMR unstable orbits of CCR4BP map, $\varepsilon = 0.0, 8.0 \times 10^{-6}, 2.5265 \times 10^{-5}$ (from top to bottom) \cite{kumar2023aas}}
\vspace{-6pt}
\end{centering}
\end{figure}
These plots are all in ``action-angle-like'' coordinates (see \cite{kumar2023aas} for details) of the unstable 4:3 MMR orbit NHIM, in order to highlight the appearance of secondary resonances. Indeed, in the intermediate-perturbation middle plot where tori and isolated SPOs coexist, one can see the secondary resonant periodic orbits at the ``necks'' and ``centers'' of pendulum-shaped regions bounded by persisting tori, as expected from perturbation theory. In the middle and bottom plots, SPOs with $\lambda_1,\lambda_2$ hyperbolic (elliptic) are marked by $\times$'s ($\circ$'s), respectively. While such orbits are simply represented by these isolated points for the CCR4BP stroboscopic map, they correspond to continuous 1D orbits of the CCR4BP flow as well; two such flow-periodic orbits, for $\frac{\omega}{2\pi}=11/34$ and $12/37$, are plotted in Figs. \ref{fig:1134orbit} and \ref{fig:1237orbit} respectively. 
\begin{figure}[h!]
\begin{centering}
\includegraphics[width=0.47\columnwidth]{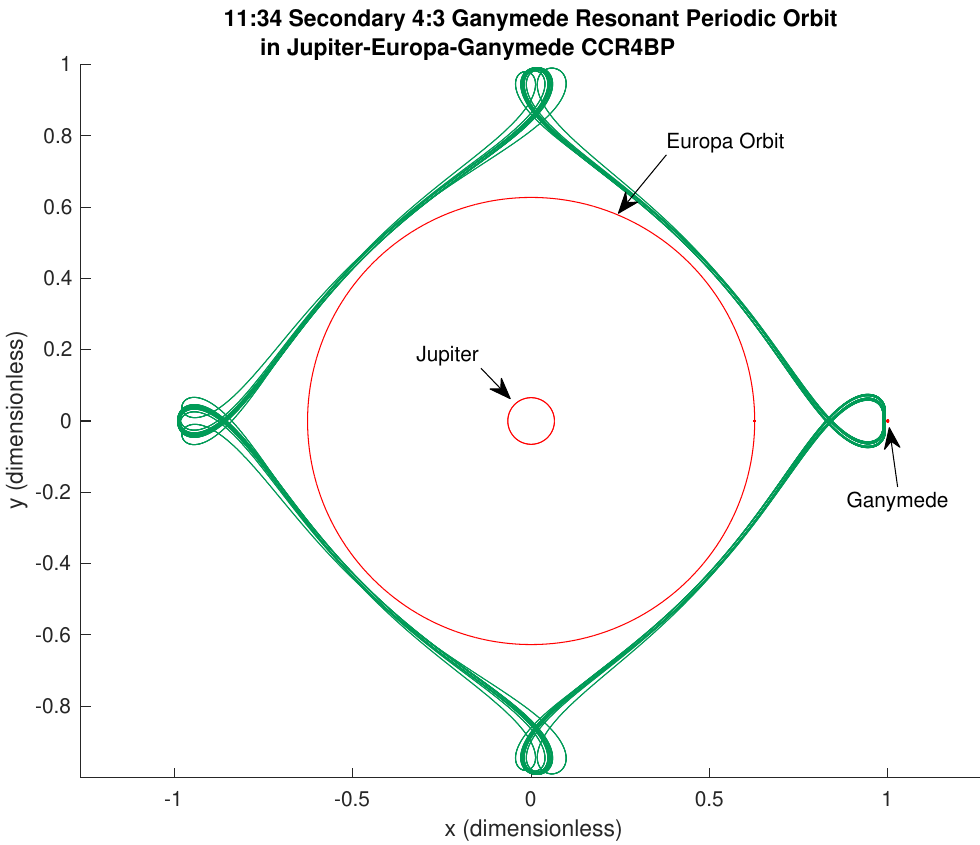}
\includegraphics[width=0.48\columnwidth]{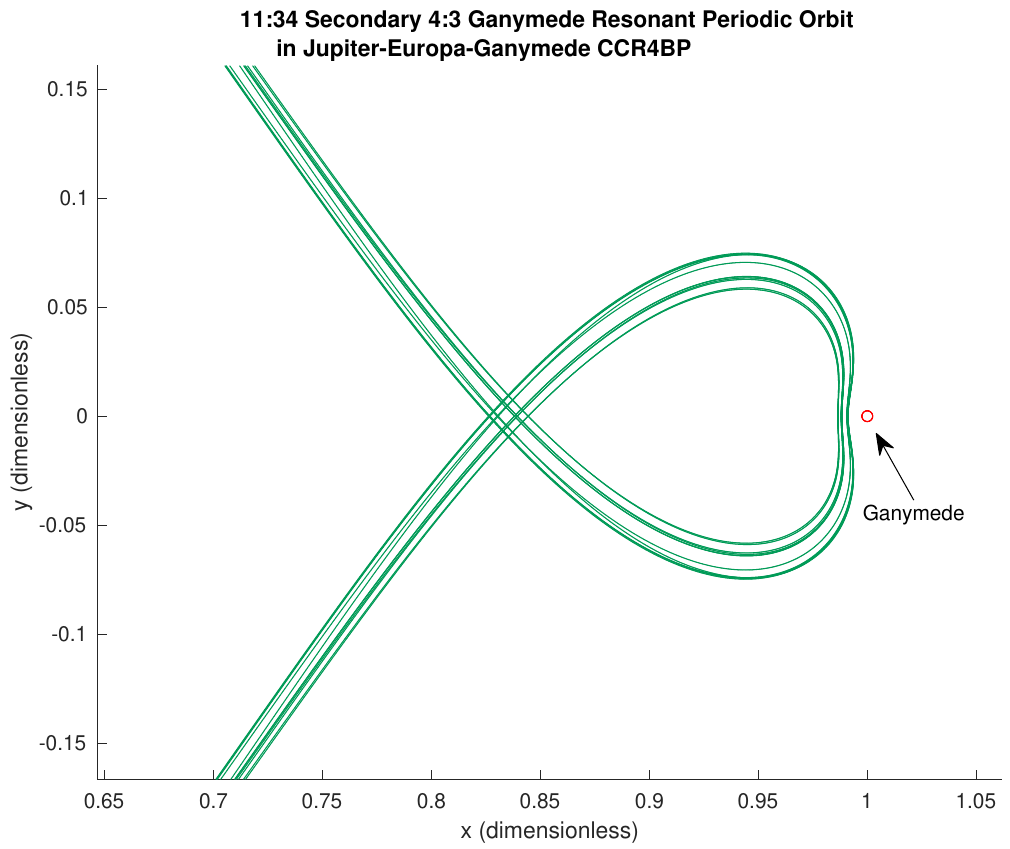}
\vspace{-10pt}
\caption{\label{fig:1134orbit} 4:3 Ganymede MMR 11/34 flow-SPO, including zoomed view on right, Jupiter-Europa-Ganymede CCR4BP}
\end{centering}
\vspace{-3pt}
\end{figure}
\begin{figure}
\includegraphics[width=0.503\columnwidth]{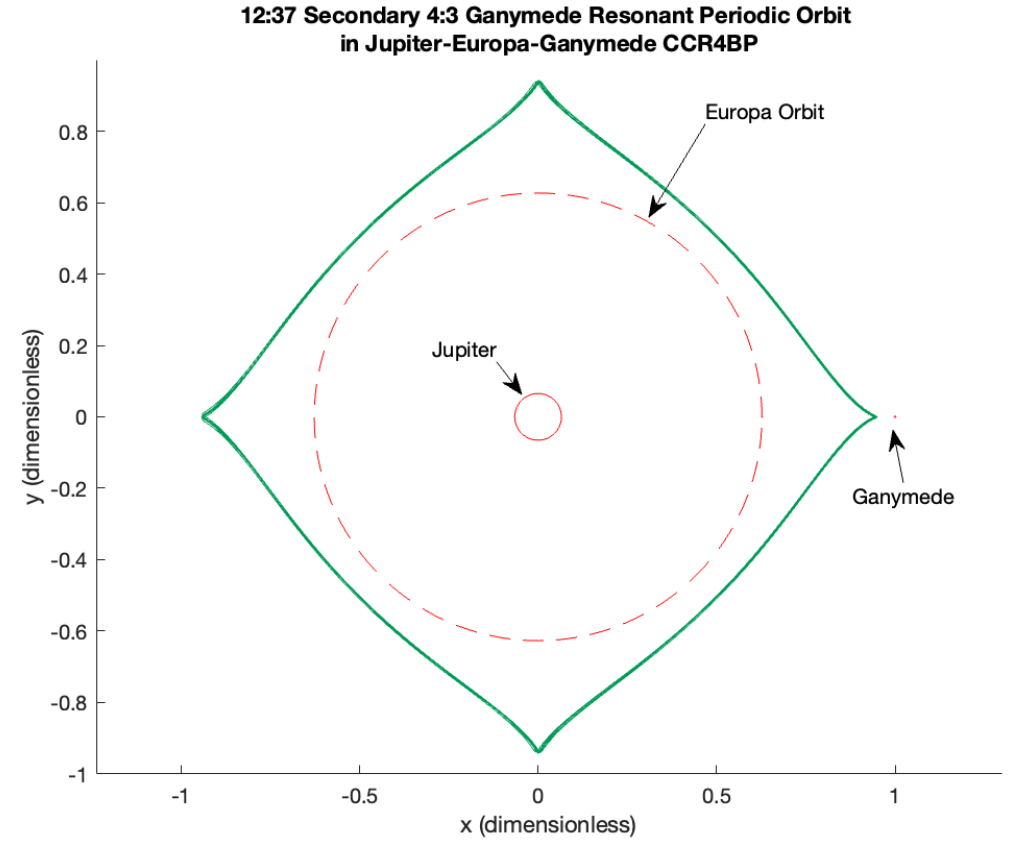}
\includegraphics[width=0.497\columnwidth]{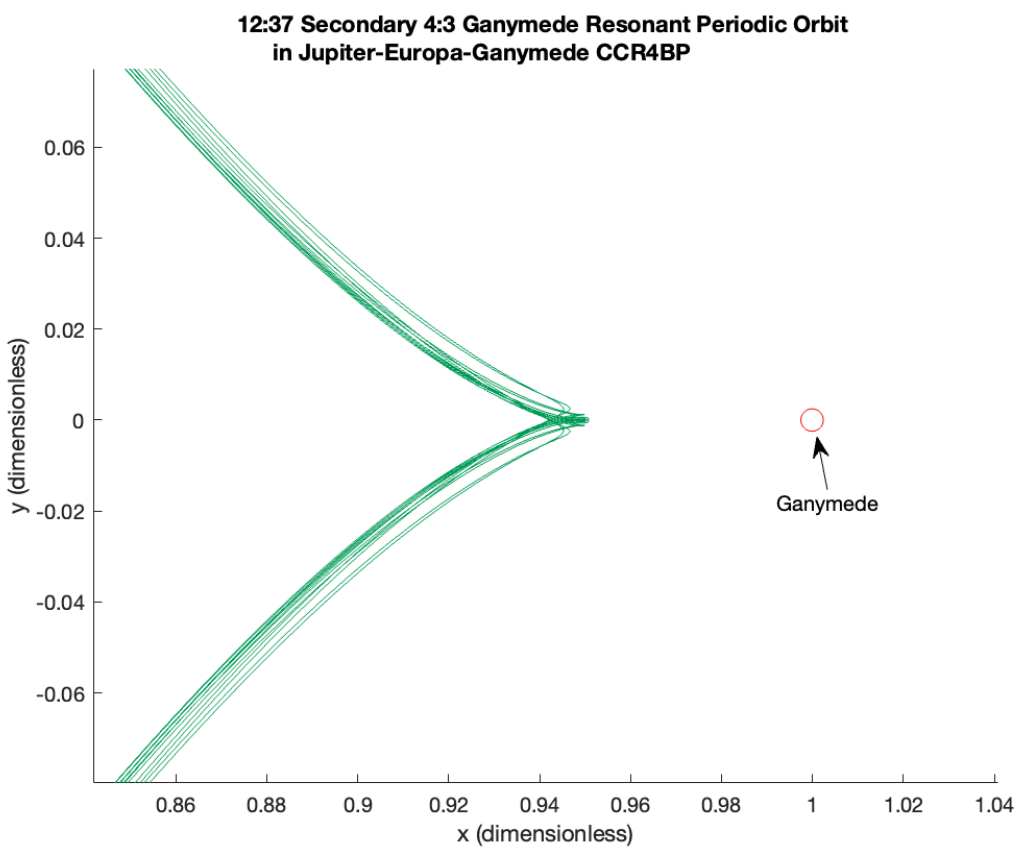}
\vspace{-19pt}
\caption{\label{fig:1237orbit} 4:3 Ganymede MMR 12/37 flow-SPO, including zoomed view on right, Jupiter-Europa-Ganymede CCR4BP \cite{kumar2023aas}}
\vspace{-3pt}
\end{figure}

Finally, with the secondary resonant SPO periodic orbits computed in the CCR4BP for all the aforementioned ratios $\frac{\omega}{2\pi} = p/q$, as well as the SPOs' Floquet directions and multipliers, we took advantage of these Floquet directions to compute separatrices emanating from those SPOs with $\lambda_1$, $\lambda_{2}$ hyperbolic. Following the parameterization method of Section \ref{parambigsection} and plotting the resulting separatrices' points in the same ``action-angle-like'' coordinates also used in Fig. \ref{fig:srPOs}, the resulting curves are displayed in Fig. \ref{fig:separatrices}; the SPO points are shown as well. The ability of the parameterization method to accurately capture the nonlinear shape of these separatrices farther away from their base SPOs helped confirm that separatrices of consecutive SPOs intersect in this part of the 4:3 MMR NHIM---thereby destroying all tori for this range of $\omega$, and providing the dynamical cause due to which the earlier torus continuations had failed. We refer the interested reader to the full paper \cite{kumar2023aas} for more details and analysis of this result and its practical significance. 

\begin{figure}[h!]
\begin{centering}
\includegraphics[width=0.99\columnwidth]{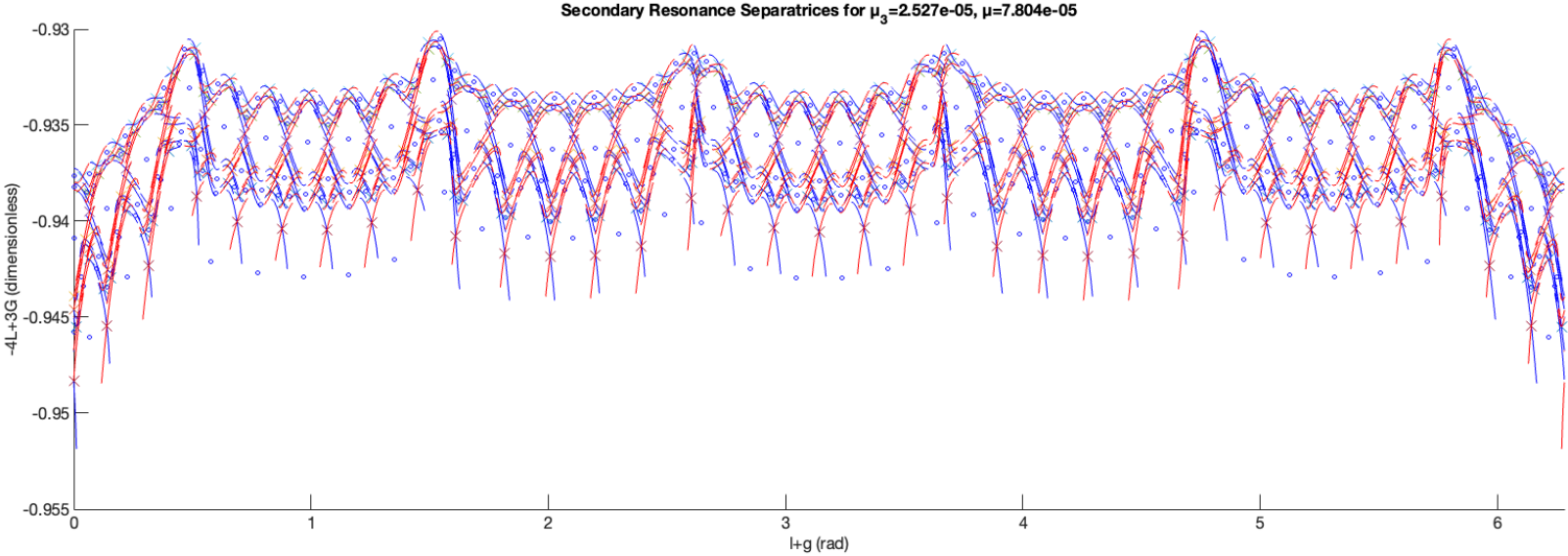}
\vspace{-12pt}
\caption{\label{fig:separatrices} Ganymede 4:3 MMR unstable secondary resonant periodic orbits and separatrices for stroboscopic map of physical-mass Jupiter-Europa-Ganymede CCR4BP. Plot in ``action-angle-like'' coordinates. Stable separatrices in blue, unstable in red.}
\end{centering}
\vspace{-12pt}
\end{figure}

While \cite{kumar2023aas} studied orbits of the 4:3 Ganymede MMR in the Jupiter-Europa-Ganymede system, in \cite{kumar2024aas} we shifted focus to the Uranian system, finding phenomena similar to the Jovian case in Oberon's 6:5 MMR as well. While we refer the reader to the full paper \cite{kumar2024aas} for details, in summary, once again we started with a family of (here, 6:5 Oberon) MMR unstable periodic orbits from the (Uranus-Oberon) PCR3BP, and studied the effect of an additional perturbing moon (here, Titania) on those orbits. Modeling the latter situation by a CCR4BP with $\mu = 3.5433 \times 10^{-5}$, we used the quasi-Newton methods of \cite{kumar2022} and Section \ref{spoSection} to respectively continue tori and SPOs (with their Floquet directions), across a range of stroboscopic map $\omega$ values, from $\varepsilon = 0$ to Titania's real mass ratio $\varepsilon = 3.9168 \times 10^{-5}$. Here, the secondary resonance ratios $\frac{\omega}{2\pi}$ for which real-mass CCR4BP SPOs were computed were 25/69, 21/58, 17/47, 30/83, 13/36, 22/61, and 9/25. 
Fig.\ \ref{fig:65oberon} shows the 6:5 Oberon MMR PCR3BP unstable periodic orbit family on left, and the successfully-continued 9/25 secondary resonant periodic orbit of the Uranus-Titania-Oberon CCR4BP flow on right. 

\begin{figure}[t]
\includegraphics[width=0.509\columnwidth]{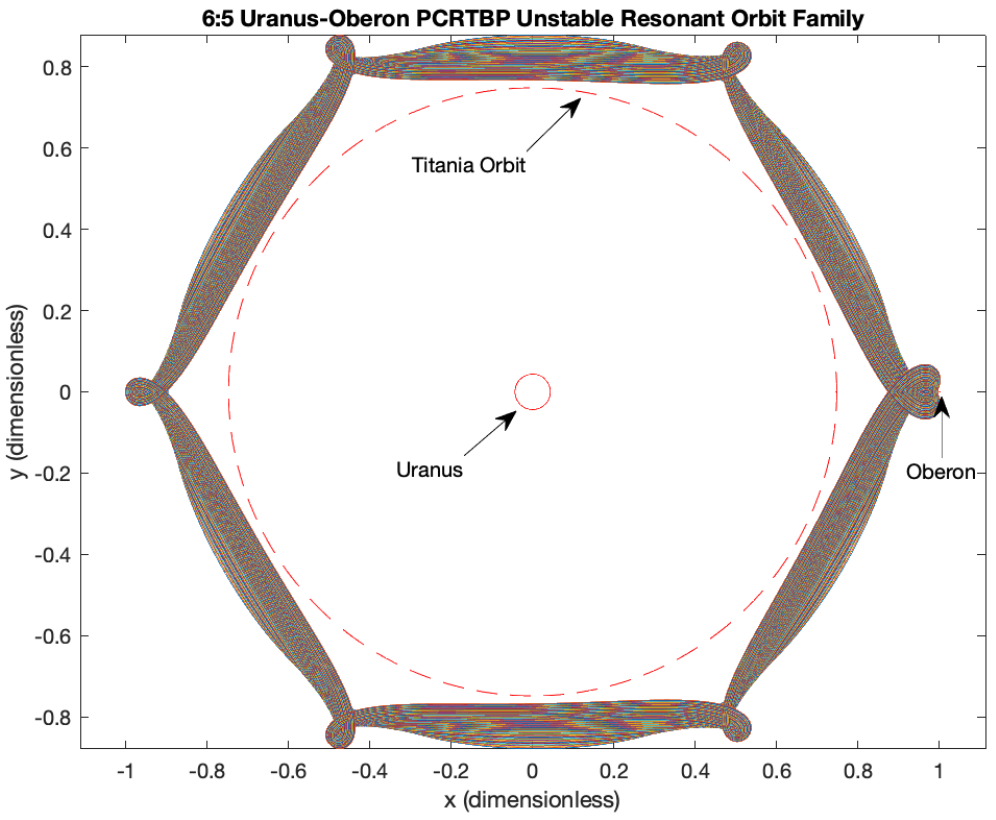}
\includegraphics[width=0.491\columnwidth]{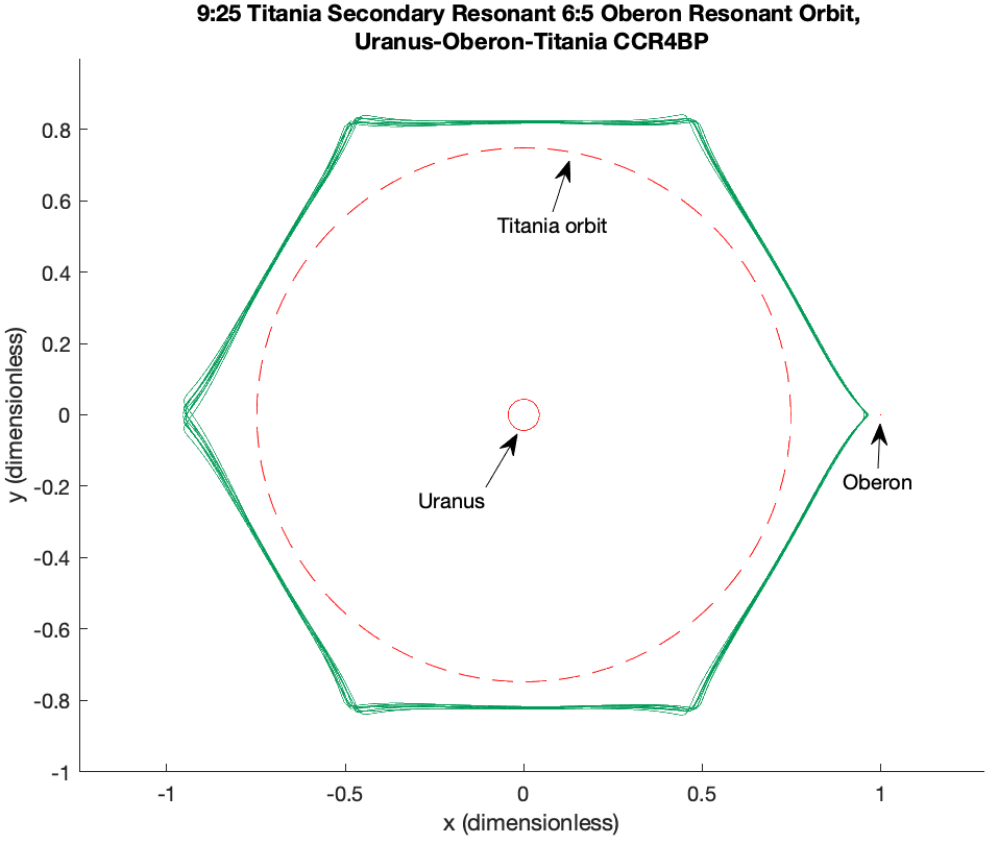}
\caption{\label{fig:65oberon} (L) Uranus-Oberon PCRTBP 6:5 MMR unstable periodic orbits, (R) 6:5 Oberon MMR 9/25 flow-SPO, Uranus-Titania-Oberon CCR4BP \cite{kumar2024aas}} 
\end{figure}

Finally, once again, separatrices of secondary resonant CCR4BP SPOs with $\lambda_1, \lambda_2$ hyperbolic were computed by using the Floquet directions given by the quasi-Newton method to initialize and carry out the parameterization method of Section \ref{parambigsection}. The resulting separatrix curves were then plotted in action-angle-like coordinates for the 6:5 MMR NHIM similar to those of Fig. \ref{fig:separatrices} for the 4:3 MMR. Fig. \ref{fig:separatrices2} displays the resulting plotted SPOs and separatrices, along with some of the persisting invariant tori that occur only in phase space regions further away from Titania's orbit. Here as well, the ability to compute the separatrices farther away from their base SPOs was key to detecting intersections of separatrices of consecutive secondary resonant periodic orbits, which again provides the mechanism of torus destruction in this region. The interested reader may refer to \cite{kumar2024aas} for more details of this Uranian study. 

\begin{figure}
\begin{centering}
\includegraphics[width=0.999\columnwidth]{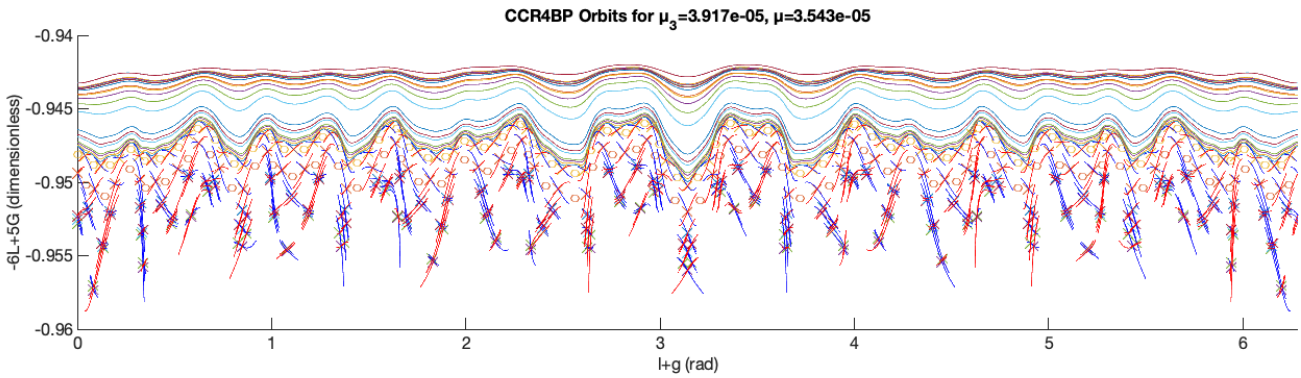}
\vspace{-12pt}
\caption{\label{fig:separatrices2} Oberon 6:5 MMR unstable tori, secondary resonant periodic orbits, and separatrices for stroboscopic map of physical Uranus-Titania-Oberon CCR4BP. Plot in ``action-angle-like'' coordinates. Stable separatrices in blue, unstable in red \cite{kumar2024aas}.}
\end{centering}
\vspace{-12pt}
\end{figure}

\section{Conclusion}

In this paper, we developed a fast quasi-Newton method for the simultaneous computation of subharmonic periodic orbits and their Floquet vectors \& multipliers for some common perturbative families of 4D symplectic maps. Our method improves the efficiency of the SPO calculation by eliminating the need for solving and finding eigenvectors of large linear systems of equations, as is required by the traditional multiple shooting methods used in the vast majority of existing literature. Furthermore, this paper's method provides Floquet directions and multipliers that traditional methods do not; this information can in turn be used to initialize an order-by-order method for the computation of Taylor series parameterizations of these SPO's weak stable and unstable separatrices. Our quasi-Newton method, for the first time ever, extends the invariant torus parameterization methods of e.g., \cite{haroLlave, haroetal, kumar2022}, to the direct computation of points of subharmonic periodic orbits. 
%This is achieved through relaxing requirements on the form of the Floquet matrix and its correction steps, and by developing alternative methods for solving various common equations in the SPO case. 
The developed quasi-Newton method can be used for continuation of SPOs and Floquet vectors by perturbation parameter, for whose initialization we presented methods of constructing an unperturbed-map solution. 

Notably, the methods of this paper apply to stroboscopic maps of 2.5 DOF Hamiltonian systems that arise from periodic perturbations of 2 DOF autonomous Hamiltonians. One is thus able to use this work's quasi-Newton method to numerically continue periodic orbits of the 2 DOF system whose periods under the flow are in resonance with that of the perturbation. Such a situation applies in real-life applications when considering the perturbative effect of a third large body on periodic orbits of the commonly-studied 2 DOF planar circular restricted 3-body problem. The methods of this paper for computing the resulting SPOs, Floquet directions, and separatrices have been successfully applied to such investigations in the Jovian and Uranian systems, providing a practical demonstration of these algorithms' real-world utility. 

While the methods developed in this work assume a 4D map phase space, an interesting future extension would be the case of subharmonic periodic orbits in higher dimensions. For example, one could consider a non-degenerate flow-periodic orbit of an $n$-DOF Hamiltonian flow on $\mathbb{R}^{2n}$ for which the monodromy matrix eigenvalue 1 has algebraic multiplicity 2, and study the effect of an $(n+0.5)$-DOF periodic perturbation on this orbit. If the orbit period under the $n$-DOF flow is resonant with that of the perturbation, this (subharmonic) periodic orbit may also persist into the $(n+0.5)$-DOF system for certain initial phases. An important example of one such orbit is the 9:2 near rectilinear halo orbit \cite{nrho} that NASA's Lunar Gateway space station plans to use, which is derived from a periodic Halo orbit of the 3 DOF (spatial) Earth-Moon CR3BP whose period is 2/9 that of the synodic solar perturbation. The essential steps of the quasi-Newton method, with additional equations to handle the additional Floquet multiplier pairs and directions, seem generalizable to such cases. Another useful extension would be to make this paper's method be able to handle fold bifurcations during numerical continuation. Indeed, we hope that this paper's generalization of the parameterization method from tori to periodic orbits will open new lines of future research as well.

\section*{Acknowledgements}

This research was carried out in part at the Jet Propulsion Laboratory, California Institute of Technology, under a contract with the National Aeronautics and Space Administration (80NM0018D0004). This work was supported in part by the National Science Foundation (NSF) under a Mathematical Sciences Postdoctoral Research Fellowship award no. DMS-2202994. 

\appendix

\section{Proof that $A,B$ are contraction maps in $\ell^\infty$ norm} \label{contractionProof}

We prove that $A$ is a contraction if $| \lambda_a(k)/\lambda_b(k)| <1$ for all $k = 0, 1, \dots q-1$; the same can be shown for $B$  very similarly if $| \lambda_a(k)/\lambda_b(k)| > 1$. Denote $C = \max_{k = 0, 1, \dots, q-1} | \lambda_a(k)/\lambda_b(k)|$ in the below proof. 
\begin{lemma}
If $0<C <1$, $A$ is a contraction map. Hence, in this case the iteration $u_{n+1} = A(u_{n})$ uniformly converges exponentially fast as $n \rightarrow \infty$ to the length-$q$ solution sequence $u$ of Eq. \eqref{xi3contract} (and thus also \eqref{cohomHyp}). 
\end{lemma}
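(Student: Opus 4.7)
The plan is to verify directly the contraction property of $A$ in the $\ell^\infty$ norm on the finite-dimensional space of length-$q$ complex sequences, and then invoke the Banach contraction mapping theorem to conclude both existence of the fixed point and exponential convergence of the iterates.

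First I would take two arbitrary sequences $u, v \in \mathbb{C}^q$ and compute the difference $[A(u)](k) - [A(v)](k)$ using the definition in Eq.\ \eqref{xi3contract}. The key observation is that the inhomogeneous term $b(k-1 \mod q)$ is independent of the argument sequence and therefore cancels, leaving
\begin{equation*}
[A(u)](k) - [A(v)](k) = \frac{\lambda_a(k-1 \mod q)}{\lambda_b(k-1 \mod q)} \bigl[u(k-1 \mod q) - v(k-1 \mod q)\bigr].
\end{equation*}
Taking absolute values, bounding the ratio by $C$, and bounding the sequence difference by $\|u-v\|_\infty$ gives $|[A(u)](k) - [A(v)](k)| \leq C\, \|u-v\|_\infty$ for every $k$. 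Taking the maximum over $k$ yields the contraction estimate $\|A(u) - A(v)\|_\infty \leq C\, \|u-v\|_\infty$ with Lipschitz constant $C < 1$.

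Next I would note that $\mathbb{C}^q$ equipped with the $\ell^\infty$ norm is a complete metric space, so the Banach contraction mapping theorem (cited as \cite{chicone2006} in the main text) applies: $A$ has a unique fixed point $u^{*}$, which by construction solves Eq.\ \eqref{xi3contract} and hence Eq.\ \eqref{cohomHyp}. Iterating the contraction estimate gives $\|u_n - u^{*}\|_\infty \leq C^n \|u_0 - u^{*}\|_\infty$, which is exponential decay in $n$ since $0 < C < 1$. Because the $\ell^\infty$ norm controls every component uniformly in $k$, this convergence is uniform across $k = 0, 1, \dots, q-1$, as claimed.

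There is essentially no obstacle here; the only subtlety worth flagging is that the index shift $k \mapsto k-1 \mod q$ in the definition of $A$ does not affect the argument, because taking the $\ell^\infty$ norm eliminates any dependence on which index of $u-v$ appears on the right-hand side. The analogous proof for $B$ uses the same cancellation structure from Eq.\ \eqref{xi4contract}, with the roles of $\lambda_a$ and $\lambda_b$ swapped, so that the Lipschitz constant becomes $\max_k |\lambda_b(k)/\lambda_a(k)| < 1$ under the hypothesis $|\lambda_a(k)/\lambda_b(k)| > 1$ for all $k$.
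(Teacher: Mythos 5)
Your proposal is correct and follows essentially the same route as the paper's proof: the inhomogeneous term cancels, the ratio $\lambda_a/\lambda_b$ is bounded by $C<1$, the index shift is absorbed by the supremum over $k$, and the contraction mapping theorem yields the unique fixed point and convergence. Your explicit rate estimate $\|u_n-u^{*}\|_\infty \leq C^{n}\|u_0-u^{*}\|_\infty$ and the remark on $B$ simply make precise what the paper states in words.
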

\begin{proof}
Let $u_{1},u_{2}$ be finite sequences indexed by $k=0, 1, \dots, q-1$. Then, 
\begin{align} \begin{split}  \max_{k}  \| [A(&u_1)](k) - [A(u_2)](k) \| \\
&=  \max_{k} \left|\left| \frac{\lambda_{a}(k-1 \mod q) u_1(k-1 \mod q) -  \lambda_{a}(k-1 \mod q) u_2(k-1 \mod q) }{\lambda_{b}(k-1 \mod q)} \right|\right|  \\
&\leq  C \max_{k} \| u_1(k-1 \mod q) - u_2(k-1 \mod q)  \| = C \max_{k} \| u_1(k) - u_2(k) \| 
\end{split} \end{align} 
As $0<C<1$, $A$ is a contraction map under the $\ell^\infty$ norm. The contraction mapping theorem \citep{chicone2006} tells us that every such map has a unique fixed point; furthermore, the fixed point can be found by iterating any value in the domain of the map forwards until convergence. The solution of Eq. \eqref{xi3contract}  is by definition the fixed point of contraction map $A$. Hence, the iteration converges to $u$. 
\end{proof}

\section{Proof of constant $\tilde \lambda_s, \tilde \lambda_u$ solution with $\bold{v}_{s}, \bold{v}_{u}$ rescaled by $a_s, a_u$} \label{rescaleProof}

Using the same notation as in Section \ref{constantLambda}, note that since $X_{\varepsilon}$, $P_{\varepsilon}$, $\Lambda_{\varepsilon}$ was a solution of Eq. \eqref{bundleEquations}, then 
\begin{equation} \label{vsEquation} DF_{\varepsilon}(X_{\varepsilon}(k))  \bold{ v}_{s}(k) =  \lambda_{s}(k) \bold{ v}_{s}(k + 1 \mod q)\end{equation}
\begin{equation} \label{vuEquation} DF_{\varepsilon}(X_{\varepsilon}(k))  \bold{ v}_{u}(k) =  \lambda_{s}(k) \bold{ v}_{u}(k + 1 \mod q)\end{equation}
which can be derived from columns 3 and 4 of Eq. \eqref{bundleEquations}. Since the procedure of Section \ref{constantLambda} leaves columns 1 and 2 of $\tilde P_{\varepsilon}$ and $\tilde \Lambda_{\varepsilon}$ unchanged from $P_{\varepsilon}$ and $\Lambda_{\varepsilon}$ (and thus still satisfying Eq. \eqref{bundleEquations}), we only need to verify that equations similar to Eqs. \eqref{vsEquation}--\eqref{vuEquation} hold for columns 3 and 4 of $\tilde P_{\varepsilon}$ and $\tilde \Lambda_{\varepsilon}$ as well. This is proven for column 3 ($\bold{\tilde v}_{s}$ and $\bar \lambda_s$) below; the case of $\bold{\tilde v}_{u}$ and $\lambda_u$ can be proven in the exact same manner. 
\begin{lemma}
If $\bold{ v}_{s}(k)$, $\lambda_s(k)$ satisfy Eq. \eqref{vsEquation} for all $k=0, 1, \dots, q-1$, and $a_{s}(k)$,  $\bar \lambda_{s}$ satisfy Eq. \eqref{scalevs}, then $\bold{\tilde v}_{s}(k) = a_{s}(k) \bold{v}_{s}(k)$ satisfies
\begin{equation} DF_{\varepsilon}(X_{\varepsilon}(k))  \bold{\tilde v}_{s}(k) =  \bar \lambda_{s} \bold{\tilde v}_{s}(k + 1 \mod q)\end{equation}
\end{lemma}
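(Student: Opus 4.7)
The plan is to proceed by direct calculation, reducing the claim to a single substitution once the logarithmic relation (\ref{scalevs}) is rewritten in multiplicative form. First I would exponentiate both sides of (\ref{scalevs}) to obtain the key identity
\begin{equation} a_s(k)\,\lambda_s(k) = \bar\lambda_s\, a_s(k+1 \bmod q), \end{equation}
which is equivalent to (\ref{scalevs}) since all quantities involved are positive (recall from Section \ref{initCols34} that $\lambda_s(k)>0$ for all $k$, that $\bar\lambda_s>0$ by its definition as a geometric mean of the $\lambda_s(k)$, and that $a_s(k)=e^{u(k)}>0$). This rewriting is the only real content of the proof; once it is in hand the rest is a one-line manipulation.

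Next I would write $\bold{\tilde v}_s(k) = a_s(k)\bold{v}_s(k)$ and pull the scalar $a_s(k)$ out of $DF_\varepsilon(X_\varepsilon(k))$, apply the hypothesis (\ref{vsEquation}) to produce $a_s(k)\lambda_s(k)\bold{v}_s(k+1 \bmod q)$, and then use the identity displayed above to replace $a_s(k)\lambda_s(k)$ by $\bar\lambda_s a_s(k+1 \bmod q)$. Finally, recognize $a_s(k+1 \bmod q)\bold{v}_s(k+1 \bmod q)$ as $\bold{\tilde v}_s(k+1 \bmod q)$, yielding the desired equation.

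There is really no obstacle of substance here. The only mild care needed is to ensure that the rescaling procedure of Section \ref{constantLambda} preserves Eq. \eqref{bundleEquations} column-wise: since (\ref{scalevs})--(\ref{scalevu}) decouple the $\bold{v}_s$ and $\bold{v}_u$ rescalings from columns 1 and 2 of $P_\varepsilon$, and since replacing $\lambda_s(k)$ by the constant $\bar\lambda_s$ in $\Lambda_\varepsilon$ only affects the $(3,3)$ entry, it suffices to check the column-3 equation (and, by the identical argument with $a_u$, $\bar\lambda_u$, $\bold{v}_u$ in place of $a_s$, $\bar\lambda_s$, $\bold{v}_s$, the column-4 equation). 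Hence the single computation above completes the verification that $(X_\varepsilon, \tilde P_\varepsilon, \tilde\Lambda_\varepsilon)$ solves Eq. \eqref{bundleEquations}, as claimed at the end of Section \ref{constantLambda}.
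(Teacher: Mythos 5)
Your proposal is correct and is essentially identical to the paper's proof: exponentiate Eq.~\eqref{scalevs} to obtain $a_s(k)\lambda_s(k)=\bar\lambda_s\,a_s(k+1\bmod q)$, pull the scalar $a_s(k)$ through $DF_\varepsilon$, apply Eq.~\eqref{vsEquation}, and substitute. Your added remarks on positivity (justifying the equivalence of the logarithmic and multiplicative forms) and on the column-wise sufficiency of the check are correct and consistent with the surrounding text, though the paper leaves them implicit.
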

\begin{proof}
Since $DF_{\varepsilon}(X_{\varepsilon}(k)) \bold{v}_{s}(k) = \lambda_{s}(k) \bold{v}_{s}(k + 1 \mod q)$, we have
\begin{align} \begin{split} DF_{\varepsilon}(X_{\varepsilon}(k))  \bold{\tilde v}_{s}(k) &= DF_{\varepsilon}(X_{\varepsilon}(k))   a_{s}(k) \bold{ v}_{s}(k)   =a_{s}(k) \lambda_{s}(k) \bold{ v}_{s}(k + 1 \mod q) \\
&= a_{s}(k + 1 \mod q)  \bar \lambda_{s} \bold{v}_{s}(k + 1 \mod q)) =  \bar \lambda_{s}  \bold{\tilde v}_{s}(k + 1 \mod q) \end{split} \end{align}
where $a_{s}(k) \lambda_{s}(k) = a_{s}(k + 1 \mod q) \bar \lambda_{s} $ follows from exponentiating Eq. \eqref{scalevs}. 
\end{proof}

\section{Proof of properties of $\bold{v}_c(k)$ and $\bold{v}_2(k)$} \label{sympConjProof}

Here, we show that the vectors $\bold{v}_{c}(k)$ and $\bold{v}_{2}(k)$ found using the procedures of Section \ref{initCols12} satisfy Eqs. \eqref{almostCol2} and \eqref{finalCol2}, respectively. This is proven as a result of the two lemmas below, both adapted from similar results for invariant tori \cite{kumar2022}. 

\begin{lemma} \label{sympconjLemma}
The vectors $\bold{v}_{c}(k)$ defined in Eq. \eqref{sympconj} satisfy Eq. \eqref{almostCol2} for all $k = 0, 1, \dots, q-1$, i.e.:
\begin{equation}  \label{almostCol2_2} DF_0(X_0(k)) \bold{v}_{c}(k) = T(k) DK_0(\theta_{k+1 \mod q}) + \bold{v}_{c}(k+1 \mod q)  \end{equation}
\end{lemma}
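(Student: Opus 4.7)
The plan is a direct verification: substitute the defining expression \eqref{sympconj} into the left-hand side of Eq. \eqref{almostCol2_2} and repeatedly apply the identities already established in Section \ref{initCols12}. No induction, limits, or genuinely new geometric arguments are needed --- this is essentially a bookkeeping computation.

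First, I would apply $DF_0(X_0(k))$ term-by-term to
$\bold{v}_{c}(k) = \frac{J^{-1}DK_0(\theta_k)}{\|DK_0(\theta_k)\|^{2}} + f_1(k)\bold{v}_s(k) + f_2(k)\bold{v}_u(k)$
using linearity. The first summand is handled by Eq. \eqref{abcd}, which expresses $DF_0(X_0(k))\frac{J^{-1}DK_0(\theta_k)}{\|DK_0(\theta_k)\|^{2}}$ as a linear combination of the four directions $DK_0(\theta_{k+1 \mod q})$, $\frac{J^{-1}DK_0(\theta_{k+1 \mod q})}{\|DK_0(\theta_{k+1 \mod q})\|^{2}}$, $\bold{v}_s(k+1 \mod q)$, and $\bold{v}_u(k+1 \mod q)$ with coefficients $A(k), B(k), C(k), D(k)$. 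The crucial input is $B(k)=1$, which is the symplectic identity asserted immediately after Eq. \eqref{abcd} and referenced as Eq. \eqref{BisOne}. The second and third summands are handled by Eqs. \eqref{evecSConfirm}--\eqref{evecUConfirm}; after the rescaling at the end of Section \ref{initCols34} (which makes $\lambda_s,\lambda_u$ independent of $k$) these become simply $DF_0(X_0(k))\bold{v}_s(k)=\lambda_s\bold{v}_s(k+1 \mod q)$ and $DF_0(X_0(k))\bold{v}_u(k)=\lambda_u\bold{v}_u(k+1 \mod q)$.

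Combining the three contributions, $DF_0(X_0(k))\bold{v}_{c}(k)$ becomes a linear combination at index $k+1 \mod q$ of the same four directions, with coefficients $A(k)$, $1$, $C(k)+\lambda_s f_1(k)$, and $D(k)+\lambda_u f_2(k)$ respectively. The key observation --- and the only place where the specific construction of $f_1,f_2$ is used --- is that Eqs. \eqref{f1}--\eqref{f2} rearrange to $C(k)+\lambda_s f_1(k)=f_1(k+1 \mod q)$ and $D(k)+\lambda_u f_2(k)=f_2(k+1 \mod q)$. Substituting these, the three terms other than $A(k)DK_0(\theta_{k+1 \mod q})$ reassemble exactly into the expression \eqref{sympconj} evaluated at index $k+1 \mod q$, yielding $DF_0(X_0(k))\bold{v}_{c}(k)=A(k)DK_0(\theta_{k+1 \mod q})+\bold{v}_{c}(k+1 \mod q)$, which matches Eq. \eqref{almostCol2}.

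I do not anticipate a serious obstacle. The fact $B(k)=1$ is the one non-trivial input, but it comes from a separate symplectic geometry argument (Eq. \eqref{BisOne}) outside the scope of this lemma. The only care required is to (i) keep using the rescaled, $k$-independent $\lambda_s$ and $\lambda_u$ so that they pass transparently through the cohomological equations \eqref{f1}--\eqref{f2}, and (ii) track the index shifts modulo $q$ carefully so that the recombination of the four coefficients at the shifted index is correctly recognized as $\bold{v}_c(k+1 \mod q)$.
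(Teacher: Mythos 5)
Your computational outline matches the paper's proof step for step: apply $DF_0$ term-by-term to the definition \eqref{sympconj}, expand the first summand using \eqref{abcd}, propagate $\bold{v}_s$ and $\bold{v}_u$ by the (rescaled, constant) multipliers, and use \eqref{f1}--\eqref{f2} to shift the coefficients of the stable/unstable terms from index $k$ to $k+1 \bmod q$, after which the last three terms reassemble into $\bold{v}_c(k+1 \bmod q)$. That recombination argument is exactly what the paper does.

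However, there is one genuine gap. You treat $B(k)=1$ as a known external input, saying it ``comes from a separate symplectic geometry argument (Eq.\ \eqref{BisOne}) outside the scope of this lemma.'' In fact, Eq.\ \eqref{BisOne} \emph{is} inside the paper's proof of Lemma \ref{sympconjLemma}; establishing $B(k)=1$ is the substantive portion of that proof, not a prior result you can cite. The paper first derives the intermediate identity \eqref{DFtimesvc} (your Step 1, but with $B(k)$ still unknown), then proves $\Omega(DK_0(\theta_k),\bold{v}_s(k))=\Omega(DK_0(\theta_k),\bold{v}_u(k))=0$ via the fixed-point/shrinking argument in \eqref{deriveZero}, uses this to compute $\Omega(DK_0(\theta_k),\bold{v}_c(k))=1$ in \eqref{areaOne}, and finally invokes the symplecticity of $F_0$ applied to \eqref{col1Lambda} and \eqref{DFtimesvc} to conclude $B(k)=1$ in \eqref{BisOne}. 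Without that chain your proposal assumes the very fact that makes the three remaining terms reassemble into $\bold{v}_c(k+1 \bmod q)$. So as written, the proposal is a correct skeleton but defers the one non-trivial step; to be self-contained it needs to reproduce the symplectic-orthogonality computation establishing $B(k)=1$.
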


\begin{lemma} \label{constTLemma}
If $\bold{v}_{c}(k)$ and $a(k)$ satisfy Eqs. \eqref{almostCol2} and \eqref{Tkill} respectively for all $k = 0, 1, \dots, q-1$, with $ T = \frac{1}{q}\sum_{k=0}^{q-1} A(k) $ in Eq.  \eqref{Tkill}, then $\bold{ v}_{2}(k) = \bold{v}_{c}(k) + a(k) DK_0(\theta_{k})$ will satisfy Eq. \eqref{finalCol2}:
\begin{equation}  DF_{0}(X_{0}(k))  \bold{ v}_{2}(k) =  T DK_0(\theta_{k+1 \mod q}) + \bold{ v}_{2}(k + 1 \mod q)   \end{equation}
\end{lemma}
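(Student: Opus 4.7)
The plan is to verify Eq.~\eqref{finalCol2} by direct substitution, using the three ingredients at hand: the hypothesis on $\bold{v}_c(k)$ (Eq.~\eqref{almostCol2}), the differentiated torus invariance relation (Eq.~\eqref{col1Lambda}), and the defining relation for the sequence $a(k)$ (Eq.~\eqref{Tkill}). Since $\bold{v}_2(k)=\bold{v}_c(k)+a(k)\,DK_0(\theta_k)$, I would first expand $DF_0(X_0(k))\bold{v}_2(k)$ by linearity into two pieces: $DF_0(X_0(k))\bold{v}_c(k)$ and $a(k)\,DF_0(X_0(k))\,DK_0(\theta_k)$.

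The first piece is rewritten via the assumed identity \eqref{almostCol2_2} as $A(k)DK_0(\theta_{k+1\mod q})+\bold{v}_c(k+1\mod q)$, and the second piece is rewritten via Eq.~\eqref{col1Lambda} as $a(k)DK_0(\theta_{k+1\mod q})$. Adding them gives $[A(k)+a(k)]\,DK_0(\theta_{k+1\mod q})+\bold{v}_c(k+1\mod q)$. Meanwhile, the desired right-hand side of Eq.~\eqref{finalCol2}, namely $T\,DK_0(\theta_{k+1\mod q})+\bold{v}_2(k+1\mod q)$, expands (using the definition of $\bold{v}_2$) to $[T+a(k+1\mod q)]\,DK_0(\theta_{k+1\mod q})+\bold{v}_c(k+1\mod q)$.

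Matching coefficients of $DK_0(\theta_{k+1\mod q})$ in these two expressions, the identity to verify reduces to $A(k)+a(k)=T+a(k+1\mod q)$, which is precisely a rearrangement of Eq.~\eqref{Tkill}. The $\bold{v}_c(k+1\mod q)$ terms match automatically. Hence the claim follows.

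There is no real obstacle here beyond careful bookkeeping: the lemma is by design a purely algebraic consequence of the three cited equations. The only subtle point worth flagging is the consistency of Eq.~\eqref{Tkill} itself, which requires its right-hand side to have zero mean over $k=0,\dots,q-1$; this is exactly why $T$ was chosen as the average $\frac{1}{q}\sum_{k=0}^{q-1}A(k)$ in the hypothesis, so that the method of Section~\ref{cohomSection} produces the sequence $a(k)$ used above. Thus the proof is a short chain of substitutions, with the nontrivial content having already been absorbed into the definitions of $T$ and $a(k)$.
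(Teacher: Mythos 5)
Your proof is correct and follows essentially the same route as the paper's: expand $DF_0(X_0(k))\bold{v}_2(k)$ by linearity, apply Eqs.~\eqref{almostCol2} and~\eqref{col1Lambda}, and reduce to the identity $A(k)+a(k)=T+a(k+1\bmod q)$, which is a rearrangement of Eq.~\eqref{Tkill}. The side remark about why $T$ must be the average of $A(k)$ (so that Eq.~\eqref{Tkill} is solvable) is a useful observation but not part of the paper's proof, which simply assumes $a(k)$ is given.
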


\begin{proof}[Proof of Lemma \ref{sympconjLemma}]
Applying Eq. \eqref{sympconj} and then Eq. \eqref{abcd}, and recalling that the $\bold{v}_{s}$, $\bold{v}_{u}$, $\lambda_s$, $\lambda_{u}$ of Section \ref{initCols34} satisfy $DF_0(X_0(k)) \bold{v}_{s}(k) = \lambda_{s} \bold{v}_{s}(k+1 \mod q)$ and $DF_0(X_0(k)) \bold{v}_{u}(k) = \lambda_{u}  \bold{v}_{u}(k+1 \mod q)$, we have
\begin{align} \begin{split} DF_0(X_0(k)) &\bold{v}_{c}(k) = DF_0(X_0(k)) \left( \frac{J^{-1} DK_0(\theta_k)}{ \|DK_0(\theta_k)\|^{2}} + f_{1}(k) \bold{v}_{s}(k)  + f_{2}(k) \bold{v}_{u}(k) \right) \\
= &A(k) DK_0(\theta_{k+1 \mod q}) + B(k) \frac{J^{-1} DK_0(\theta_{k+1 \mod q})}{ \|DK_0(\theta_{k+1 \mod q})\|^{2}} \\ 
&+ \left(C(k) + \lambda_{s}(k)f_{1}(k) \right) \bold{v}_{s}(k+1 \mod q) + \left( D(k) + \lambda_{u}(k)f_{2}(k) \right)\bold{v}_{u}(k+1 \mod q) 
\end{split} \end{align} 
Recalling Equations \eqref{f1} and \eqref{f2}, we thus have that
\begin{align} \begin{split} \label{DFtimesvc} DF_0(X_0(k)) \bold{v}_{c}(k) = &A(k) DK_0(\theta_{k+1 \mod q}) + B(k) \frac{J^{-1} DK_0(\theta_{k+1 \mod q})}{ \|DK_0(\theta_{k+1 \mod q})\|^{2}} \\ 
&+ f_{1}(k+1 \mod q)  \bold{v}_{s}(k+1 \mod q) + f_{2}(k+1 \mod q) \bold{v}_{u}(k+1 \mod q) 
\end{split} \end{align} 
As $F_0$ is symplectic, it satisfies $\Omega(\bold{v}_{1}, \bold{v}_{2}) = \Omega(DF_0(X_0(k)) \bold{v}_{1}, DF_0(X_0(k)) \bold{v}_{2})$ for all $\bold{v}_{1}$, $\bold{v}_{2} \in \mathbb{R}^{4}$, where $\Omega$ is the bilinear symplectic form defined on Euclidean $\mathbb{R}^{4}$ as $ \Omega(\bold{v}_{1}, \bold{v}_{2}) =  \bold{v}_{1}^{T} J \bold{v}_{2} $. It is easy to see that $\Omega(\bold{v}_{1}, \bold{v}_{1}) = 0$ for any $\bold{v}_{1} \in \mathbb{R}^{4}$. Furthermore, as $0<  \lambda_{s} < 1$, and recalling Eq.  \eqref{col1Lambda}, we have that
\begin{align} \begin{split} \label{deriveZero} \max_{k = 0, \dots, q-1} |\Omega(DK_{0}(\theta_k),\bold{v}_{s}(k)) |&= \max_{k = 0, \dots, q-1} \left| \Omega \left(DF_0(X_0(k)) DK_0(\theta_k), DF_0(X_0(k) ) \bold{v}_{s}(k) \right) \right| \\
&=\max_{k = 0, \dots, q-1}  \left| \Omega \left(DK_0(\theta_{k+1 \mod q}), \lambda_s \bold{v}_{s}(k+1 \mod q) \right) \right| \\
&=\max_{k = 0, \dots, q-1}  \lambda_s  \left| \Omega \left(DK_0(\theta_{k+1 \mod q}), \bold{v}_{s}(k+1 \mod q) \right) \right| \\
& = \lambda_s \max_{k = 0, \dots, q-1} \left| \Omega \left(DK_0(\theta_k), \bold{v}_{s}(k) \right) \right| \\
\end{split} \end{align} 
which implies that $\max_{k = 0, \dots, q-1} \left| \Omega \left(DK_0(\theta_k), \bold{v}_{s}(k) \right) \right| = 0$. Thus, for all $k$, $\Omega \left(DK_0(\theta_k), \bold{v}_{s}(k) \right) =0$. We can also show that $\Omega \left(DK_0(\theta_k), \bold{v}_{u}(k) \right) = 0$ in a very similar manner. Hence, using Eq. \eqref{sympconj} for $\bold{v}_{c}$, we find 
\begin{align} \label{areaOne} \begin{split} \Omega(DK_0(\theta_k),\bold{v}_{c}(k)) &= \Omega \left(DK_0(\theta_k),\frac{J^{-1} DK_0(\theta_k)}{ \|DK_0(\theta_k)\|^{2}} + f_{1}(k) \bold{v}_{s}(k)  + f_{2}(k) \bold{v}_{u}(k) \right) \\
&= \Omega \left(DK_0(\theta_k),\frac{J^{-1} DK_0(\theta_k)}{ \|DK_0(\theta_k)\|^{2}} \right) \\
&= DK_0(\theta_k)^{T}J \frac{J^{-1} DK_0(\theta_k)}{ \|DK_0(\theta_k)\|^{2}}  = \frac{ DK_0(\theta_k)^{T} DK_0(\theta_k)}{ \|DK_0(\theta_k)\|^{2}} = 1
\end{split} \end{align} 
Since $F_0$ is a symplectic map, using Eqs. \eqref{col1Lambda} and \eqref{DFtimesvc} we have that 
\begin{align} \begin{split} \label{BisOne} 1 = \Omega &(DK_0(\theta_k),\bold{v}_{c}(k)) \\
= \Omega &\left(DF_0(X_0(k)) DK_0(\theta_k),DF_0(X_0(k)) \bold{v}_{c}(k) \right) \\
= \Omega &\left(DK_0(\theta_{k+1 \mod q}),A(k) DK_0(\theta_{k+1 \mod q}) + B(k) \frac{J^{-1} DK_0(\theta_{k+1 \mod q})}{ \|DK_0(\theta_{k+1 \mod q})\|^{2}} \right. \\
& \quad \quad \quad \quad \quad \quad \left. + f_{1}(k+1 \mod q)  \bold{v}_{s}(k+1 \mod q) + f_{2}(k+1 \mod q) \bold{v}_{u}(k+1 \mod q) \vphantom{\frac{J^{-1} DK_0(\theta_{k+1 \mod q})}{ \|DK_0(\theta_{k+1 \mod q})\|^{2}}} \right) \\
= \Omega &\left(DK_0(\theta_{k+1 \mod q}), B(k) \frac{J^{-1} DK_0(\theta_{k+1 \mod q})}{ \|DK_0(\theta_{k+1 \mod q})\|^{2}} \right) \\
=B(&k) DK_0(\theta_{k+1 \mod q})^{T} J  \frac{J^{-1} DK_0(\theta_{k+1 \mod q})}{ \|DK_0(\theta_{k+1 \mod q})\|^{2}} = B(k)
\end{split} \end{align} 
proving that $B(k) =1$. Therefore, substituting this into Eq. \eqref{DFtimesvc} gives
\begin{align} \begin{split} \label{subInto} DF_0(X_0(k)) \bold{v}_{c}(k) = &A(k) DK_0(\theta_{k+1 \mod q}) +  \frac{J^{-1} DK_0(\theta_{k+1 \mod q})}{ \|DK_0(\theta_{k+1 \mod q})\|^{2}} \\ 
&+ f_{1}(k+1 \mod q)  \bold{v}_{s}(k+1 \mod q) + f_{2}(k+1 \mod q) \bold{v}_{u}(k+1 \mod q) 
\end{split} \end{align} 
Finally, we see from Eq. \eqref{sympconj} that the last 3 terms on the RHS of Eq. \eqref{subInto} are just $\bold{v}_{c}(k+1 \mod q)$. Hence,
\begin{equation}  DF_0(X_0(k)) \bold{v}_{c}(k) = A(k) DK_0(\theta_{k+1 \mod q}) + \bold{v}_{c}(k+1 \mod q) \end{equation} 
which is what we sought to prove. 
\end{proof}

\begin{proof} [Proof of Lemma \ref{constTLemma}]
Since $\bold{v}_{c}(k)$ satisfies Eq. \eqref{almostCol2} and $DF_{0}(X_{0}(k)) DK_0(\theta_{k}) = DK_0(\theta_{k+1 \mod q})$, we have
\begin{align} \begin{split} DF_{0}(X_{0}(k)) \bold{ v}_{2}(k) &= DF_{0}(X_{0}(k)) \left[ \bold{v}_{c}(k) + a(k) DK_0(\theta_{k}) \right] \\
&= \left[A(k)+a(k) \right] DK_0(\theta_{k+1 \mod q})+\bold{v}_{c}(k + 1 \mod q) \\
&= \left[ T+a(k + 1 \mod q) \right] DK_0(\theta_{k+1 \mod q}) +\bold{v}_{c}(k + 1 \mod q) \\
&=  T DK_0(\theta_{k+1 \mod q}) +\bold{ v}_{2}(k + 1 \mod q) \end{split} \end{align}
where the relation $A(k)+a(k) =  T+a(k + 1 \mod q)$ follows from Eq. \eqref{Tkill}. 
\end{proof}

\bibliographystyle{elsarticle-num}
\bibliography{references}   % Use references.bib to resolve the labels.

\end{document}